\theoremstyle{plain}
\newtheorem{lemma}{Lemma}[section]
\newtheorem{proposition}[lemma]{Proposition}
\newtheorem{theorem}[lemma]{Theorem}
\theoremstyle{definition}
\newtheorem{definition}[lemma]{Definition}
\theoremstyle{remark}
\newtheorem{remark}[lemma]{Remark}
\renewenvironment{proof}[1][\proofname]{\par
  \pushQED{\qed}%
  \normalfont \topsep6\p@\@plus6\p@\relax
  \trivlist
  \item[\hskip\labelsep
        \itshape
    #1\ \ref{#1}.]\ignorespaces
}{%
  \popQED\endtrivlist\@endpefalse
}
\numberwithin{equation}{section}
\def\to{\rightarrow}
\newcommand{\cN}{\mathcal{N}}
\title[Birth  control and Turnpike property]{Birth  control and turnpike property of Lotka-McKendrick models with diffusion}
\author{Marius Bargo}
\address[Marius Bargo]{Laboratoire LANIBIO, Université Joseph Ki-Zerbo, 01 BP 7021, Ouaga 01, Ouagadougou, Burkina Faso}
\email{bargomarius@gmail.com}
\author{Yacouba Simporé}
\address[Yacouba Simporé]{Chair for Dynamics, Control, Machine Learning and Numerics, Alexander Von Humboldt- Professorship, Department of Mathematics, Friedrich-Alexander-Universit\"{a}t at Erlangen-N\"{u}rnberg, Cauerstrasse 11, 91058 Erlangen, Germany, Université Yembila Abdoulaye TOGUYENI, Burkina Faso, Laboratoire LaST,  Laboratoire LANIBIO (UJKZ)}
\email{yacouba.simpore@fau.de}
\begin{document}
\maketitle
{\bf Abstract.}  In this paper, we study the turnpike property in age-structured population dynamics with a birth control. These models describe the temporal evolution of one or more populations, incorporating age dependence and spatial structure. To this end, we first establish the null controllability of the system : we prove that, for any \(T>A\) and any initial datum in \(L^2(\Omega\times(0,A))\), the population can be driven to zero using control functions that are spatially localized in \(t\) but act only at age \(a=0\). We then show that, although this control is initially applied only at birth, it can be reformulated as a distributed control, and we demonstrate that the resulting control operator is admissible in the state space.  Thus, to prove the turnpike property we combine our null-controllability results with Phillips’ theorem on exponential stability to design a suitable dichotomy transformation based on solutions of the algebraic Riccati and Lyapunov equations. Finally, we present numerical examples that substantiate our analytical findings.
\\{\bf Keywords} : Null-controllability ;  exponential stability ;  Riccati operator ; turnpike property ; population dynamics.
\section{Introduction}
Many models of population dynamics have been proposed, including those developed by Pierre-François Verhulst and Lotka–McKendrick. One of the earliest mathematical models of population growth was presented in $1798$ by Thomas R. Malthus, who juxtaposed the exponential increase of populations with the limitations imposed by available resources. In recent decades, the field has witnessed significant advancements.

The growing interest in population dynamics is largely driven by global demographic growth, prompting policymakers to implement control measures. The baby boom, characterized by a surge in births after World War II, profoundly transformed global demographic dynamics. This rapid population increase led to challenges such as overpopulation, depletion of natural resources, and accelerated aging of populations. To address these issues, policies like China's one-child policy, inspired by Song Jian's research in 1976, were implemented.

These demographic challenges demand rigorous management to prevent crises related to the workforce, pension systems, and equitable access to resources. Studies in population dynamics aim not only to explain population evolution and address issues like extinction or stability but also to determine an optimal population level, often defined as an "ideal population," at minimal cost, within the framework of control theory.

The turnpike theory, developed by three preeminent \textsc{\romannumeral 20}\textsuperscript{th} century economists, Paul Samuelson, Robert Solow and Robert Dorfman, offers a pertinent tool in this context. Inspired by Von Neumann's maximal growth model, it optimizes dynamic trajectories by rapidly bringing the system close to an ideal stationary state before reaching the final objective. This theory establishes that, in the long term, optimal solutions to a control problem follow a trajectory close to that of an associated stationary problem.


The turnpike phenomenon refers to the tendency of optimal trajectories, over long but finite time horizons, to approach and remain close to a steady state for most of the time. This behavior was first observed by von Neumann in the context of optimal growth strategies toward economic equilibria and was later explicitly named in \cite{ref11}.

In modern macroeconomics, the turnpike property implies that when an economic agent aims to transition an economy from one capital level to another, the most efficient strategy, given a sufficiently long time horizon, is to rapidly adjust the capital stock to a level near a stationary optimal path before finally reaching the target. This concept, inherent to resource allocation strategies, has also been demonstrated in partial differential equations arising in mechanics, supporting the idea that for an optimal control problem defined over an extended time horizon, any optimal solution will remain close to the corresponding static optimal solution for most of the interval.

The turnpike phenomenon reveals a quasi-stationary structure in dynamic systems. In particular, when the time horizon \(T\) is sufficiently long, the optimal trajectory of the dynamic system remains near the steady-state solution of the associated static problem. This insight is particularly significant in population dynamics, where achieving an ideal population structure is crucial, and it provides a robust framework for characterizing population behavior and designing effective stabilization strategies.

We analyze a model of population dynamics to underscore the exponential decay of dynamic solutions toward the steady state. We consider the following system:
\begin{equation}\label{eq1.1}
\left\lbrace 
\begin{array}{ll}
\partial_{t}y(x,a,t)+\partial_{a}y(x,a,t)-\triangle y(x,a,t)+\mu(a) y(x,a,t)=0 & \text{ in }  Q,\\
 \\\partial_{\nu}y(x,a,t)=0 &\text{ on }  \Sigma,\\
 \\y(x,0,t)=\mathds{1}_{\omega}(x)v(x,t)+\displaystyle\int\limits_{0}^{A}\beta(x,a)y(x,a,t)da &\text{ in }  Q_{T},\\
 \\y(x,a,0)=y_{0}(x,a)&\text{ in } Q_A,
\end{array}
\right.
\end{equation}
with \( \mathds{1}_{\omega}(x) \) the characteristic function of the open subset \( \omega \subset \Omega \), where the control \( v \) is assumed to act.

In the above model, 
\begin{itemize}
    \item  \( Q = \Omega \times [0, A] \times [0, T] \), \( \Sigma = \partial\Omega \times [0, A] \times [0, T] \), \( Q_{T} = \Omega \times [0, T] \), and  \( Q_{A} = \Omega \times [0, A] \).
    \item  \( \Omega \subset \mathbb{R}^N \)  (\(N\in\mathbb{N}^*\)) the  bounded open set with a regular boundary representing the spatial domain occupied by the individuals. The operator \( \triangle \) denotes the Laplacian with respect to the spatial variable \( x \), modeling the diffusion or spatial displacement of the population.
    \item   \( y(x, a, t) \) represents the density of individuals located at \( x \in \Omega \), of age \( a \geq 0 \), at time \( t \geq 0 \).
    \item  \( y_0(x, a) \geq 0\) is the initial population distribution.
    \item  \( A \in [0, \infty) \) denotes the expected lifespan.
    \item  \( \beta(x, a) \) and \( \mu(a) \) are nonnegative functions representing the fertility and mortality rates, respectively, with \( \beta \) depending on both space and age.
    \item  The penultimate equation, which describes the birth process, is also known as the renewal equation; it provides a nonlocal condition that links the newborns to individuals of reproductive age: $\displaystyle\int\limits_{0}^{A}\beta(x,a)y(x,a,t)da$.
    \item  We impose homogeneous Neumann boundary conditions, reflecting an isolated population in which individuals remain confined to the domain \( \Omega \) : $\partial_{\nu}y(x,a,t)=0$.
\end{itemize}
In the sequel, we assume that the fertility rate $\beta$ and the mortality rate $\mu$ satisfy the demographic property :
\begin{align*}
(H_1):
\left\lbrace\begin{array}{l}
	\mu\geq 0 \text{ a.e. in } (0,A),\\
	\mu\in L^{1}_{loc}(0,A)\quad \displaystyle\int_{0}^{A}\mu(a)da=+\infty,
\end{array} \right.
\end{align*}

\begin{align*}
(H_2):\left\lbrace\begin{array}{l}
	\beta\in C^{1}(\Omega\times[0,A]) \text{ and }\beta(x,a)\geq 0\text{ }\forall (x,a)\in \Omega\times[0,A],
\end{array}
\right.
\end{align*}
and 
\begin{align*}
(\cN ull_{\beta}):\left\lbrace\begin{array}{l}
	\beta(x,a)=0 \text{ } \forall a\in (0,a_b) \text{ where } 0<a_0\leq a_b<A. \\
\end{array}
\right.
\end{align*}
We introduce also the following function 
\begin{equation*}
    \pi(a)=e^{-\displaystyle\int_0^a\mu(s)ds}
\end{equation*}
which is the probability of survival of an individual from age $0$ to $a.$  


In this paper, we investigate two key aspects of an infinite-dimensional linear system modeling age-structured population dynamics with spatial distribution, with a particular focus on control applied at birth : the null controllability and the turnpike property.  

 The model under consideration features a boundary control acting at birth (i.e., localized at age zero). As a first step, we study the well-posedness of the system and the admissibility of the control operator, by showing that the boundary control problem can be reformulated as an equivalent system with an internal source term, which admits a well-posed formulation.  

Next, we establish the null controllability of the system for any time \( T > A \). Finally, we prove that the optimal control problem associated with this system satisfies the exponential turnpike property in the context of infinite-dimensional continuous-time linear-quadratic control, without any constraint on the control input or the initial condition. A numerical simulation is presented to illustrate and support the theoretical results.
\section{well-posedness and the admissibility of control operators}\label{s2}
\paragraph{\bf Notation}
Let us introduce the notation used throughout this paper. We denote by \(\|\cdot\|\) (resp.\ \(\langle\cdot,\cdot\rangle\)) the norm (resp.\ inner product) on the Hilbert space 
\[
H = L^2\bigl(0,A;L^2(\Omega)\bigr).
\]
When we need to emphasize that a norm is taken in a smaller domain \(\omega\subset\Omega\), we write \(\|\cdot\|_{L^2(\omega)}\). The symbol \(\mathcal{L}(X,Y)\) designates the space of bounded linear operators from the Hilbert space \(X\) to the Hilbert space \(Y\). We also set
\[
Q_1 = \Omega\times[0,A],\quad
\Sigma_1 = \partial\Omega\times[0,A],\quad
\delta_0\text{ the Dirac mass at }0,\quad
I\text{ the identity on }H.
\]

In the context of functional analysis applied to dynamical systems and control theory, let \(\mathcal A\) be a (possibly unbounded) linear operator on \(H\). Its resolvent set \(\rho(\mathcal A)\subset\mathbb C\) is defined by
\[
\rho(\mathcal A)
=\bigl\{\lambda\in\mathbb C\mid\lambda I-\mathcal A
\text{ is invertible on }H\bigr\},
\]
and its resolvent is 
\[
R(\lambda;\mathcal A)=(\lambda I-\mathcal A)^{-1}\in\mathcal L(H).
\]
The spectrum \(\sigma(\mathcal A)\) is the complement of \(\rho(\mathcal A)\) in \(\mathbb C\).

Next, choosing any fixed \(\lambda\in\rho(\mathcal A)\), we define
\begin{itemize}
    \item the extended space \(H_{-1}\) as the completion of \(H\) under the norm
  \[
  \|x\|_{H_{-1}}:=\bigl\|R(\lambda;\mathcal A)\,x\bigr\|_{H},
  \quad x\in H,
  \]
\item  the restricted space \(H_{1}=D(\mathcal A)\) equipped with the norm
  \[
  \|z\|_{H_{1}}:=\bigl\|(\lambda I-\mathcal A)\,z\bigr\|_{H},
  \quad z\in D(\mathcal A).
  \]
\end{itemize}
With these definitions, one obtains the continuous embeddings
\[
H_1\;\subset\;H\;\subset\;H_{-1}.
\]
In addition, throughout the paper, we refer to the control that drives the state to zero as the “null-control.”


Now, we provide some basic results on the population semigroup for the linear age structured model with diffusion. We give the existence of the semigroup in the Hilbert space $H.$ For it, we define the operator 
\begin{equation*}
\mathcal{A}_m: \mathcal{D}(\mathcal{A}_m)\subset H \longrightarrow H, \qquad \mathcal{A}_m \varphi = \mathcal{A}_1 \varphi + \mathcal{A}_2 \varphi
\end{equation*}
with
\begin{align*}
\mathcal{A}_1 \varphi = \Delta \varphi \quad &\text{(the diffusion operator)} \\
\mathcal{A}_2 \varphi = -\dfrac{\partial \varphi}{\partial a} - \mu(a) \varphi  \quad &\text{(the operator without diffusion)}
\end{align*}
where
\begin{align*}
\mathcal{D}(\mathcal{A}_2) = \Big\{ \varphi \in H \, \big| \, \varphi(x, \cdot) &\text{ is locally absolutely continuous on } [0, A), \\
&\varphi(x, 0) = \int_{0}^{A} \beta(x, a) \varphi(x, a) \, da \text{ a.e. on } \Omega, \\
&\dfrac{\partial \varphi}{\partial a} + \mu(a) \varphi \in H \Big\}
\end{align*}
and
\begin{align*}
\mathcal{D}(\mathcal{A}_1) = \Big\{ \varphi \in L^2(0, A; H^2(\Omega)) \, \big| \, \dfrac{\partial \varphi}{\partial \nu} = 0 \Big\}.
\end{align*}
Finally, \begin{align*}
    \mathcal{D}(\mathcal{A}_m)=\mathcal{D}(\mathcal{A}_1)\cap\mathcal{D}(\mathcal{A}_2).
\end{align*}

We recall some fundamental notions concerning the semigroups associated with its operator. 

\begin{lemma}\label{le1.1}
The operator \((\mathcal{A}_m, D(\mathcal{A}_m))\) is the infinitesimal generator of a strongly continuous semigroup \(\mathcal{T} = (\mathcal{T}_t)_{t\geq 0}\) on \(H\).
\end{lemma}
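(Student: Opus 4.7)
The plan is to verify the hypotheses of the Hille--Yosida theorem for $\mathcal{A}_m$ by constructing the resolvent $(\lambda I-\mathcal{A}_m)^{-1}$ explicitly, integrating the transport--diffusion equation along characteristics in the age variable and then using the renewal condition to reduce the problem to a fixed-point equation on $L^2(\Omega)$.

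First I would check that $D(\mathcal{A}_m)$ is dense in $H$: smooth tensor products $\psi(a)\varphi(x)$ with $\psi\in C^{\infty}_c((0,A))$ and $\varphi\in C^{\infty}(\overline{\Omega})$ satisfying $\partial_\nu\varphi=0$ on $\partial\Omega$ lie in $D(\mathcal{A}_m)$ (the renewal condition reduces to $0=0$ because $\psi(0)=0$), and such functions are dense in $H$.

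For the resolvent equation $(\lambda I-\mathcal{A}_m)\varphi=f$ with $f\in H$ and $\lambda>0$, namely
\begin{equation*}
\lambda\varphi+\partial_a\varphi-\Delta\varphi+\mu(a)\varphi=f,\qquad \partial_\nu\varphi|_{\partial\Omega}=0,\qquad \varphi(\cdot,0)=\int_0^A\beta(\cdot,a)\varphi(\cdot,a)\,da,
\end{equation*}
I would freeze $g:=\varphi(\cdot,0)\in L^2(\Omega)$ as a parameter and solve the transport--diffusion problem by Duhamel's formula. Let $\Delta_N$ denote the Neumann Laplacian on $L^2(\Omega)$, which generates the analytic contraction semigroup $(e^{t\Delta_N})_{t\geq 0}$. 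Then
\begin{equation*}
\varphi(\cdot,a)=\pi(a)e^{-\lambda a}e^{a\Delta_N}g+\int_0^a\frac{\pi(a)}{\pi(s)}e^{-\lambda(a-s)}e^{(a-s)\Delta_N}f(\cdot,s)\,ds.
\end{equation*}
Substituting this representation into the renewal condition produces the operator equation $(I-K_\lambda)g=\Psi_\lambda(f)$ on $L^2(\Omega)$, with
\begin{equation*}
(K_\lambda g)(x)=\int_0^A\beta(x,a)\pi(a)e^{-\lambda a}(e^{a\Delta_N}g)(x)\,da.
\end{equation*}
Using $\pi(a)\leq 1$, $\|e^{a\Delta_N}\|_{\mathcal{L}(L^2(\Omega))}\leq 1$, and $\beta\in L^{\infty}$, one gets $\|K_\lambda\|_{\mathcal{L}(L^2(\Omega))}\leq \|\beta\|_{\infty}/\lambda$; for $\lambda>\|\beta\|_{\infty}$ a Neumann-series argument produces a unique $g$, hence a unique $\varphi\in D(\mathcal{A}_m)$ solving the resolvent equation. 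The explicit representation of the resolvent simultaneously yields closedness of $\mathcal{A}_m$ and the bound $\|(\lambda I-\mathcal{A}_m)^{-1}\|_{\mathcal{L}(H)}\leq C/(\lambda-\omega)$ for $\lambda$ large.

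The main obstacle is the \emph{non-local} renewal boundary condition: unlike a standard Dirichlet or Neumann condition, it couples the boundary datum at $a=0$ to the state at all ages, so the usual additive or Desch--Schappacher perturbation theorems do not apply off the shelf. The reduction to a fixed-point equation for $g$, in which the resolvent damping $e^{-\lambda a}$ beats the bounded fertility $\beta$, is precisely what makes the argument work. To upgrade the single-resolvent bound to the iterated Hille--Yosida estimate required for a non-contractive $C_0$-semigroup, I would rescale $\widetilde{\mathcal{A}}_m=\mathcal{A}_m-\omega I$ for large $\omega$, establish dissipativity of $\widetilde{\mathcal{A}}_m$ via Lumer--Phillips (combining the self-adjoint negativity of $\Delta_N$, the sign of $\mu\geq 0$, and a Cauchy--Schwarz estimate for the boundary term controlled by $K_\lambda$), and then undo the shift to conclude that $\mathcal{A}_m$ generates a strongly continuous semigroup on $H$.
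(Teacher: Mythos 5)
The paper does not actually prove this lemma; it simply cites the literature (reference \cite{ref72}), so your self-contained Hille--Yosida/Lumer--Phillips argument is a genuinely different (and more informative) route, and its skeleton --- Duhamel along the age characteristics, reduction of the renewal condition to the fixed-point equation $(I-K_\lambda)g=\Psi_\lambda(f)$ with $\|K_\lambda\|\le\|\beta\|_\infty/\lambda$, plus quasi-dissipativity to avoid iterating resolvent bounds --- is the standard and correct strategy for such operators. However, two steps as written do not hold up. First, the density argument is wrong: for $\varphi(x,a)=\psi(a)\phi(x)$ with $\psi\in C_c^\infty((0,A))$ the renewal condition does \emph{not} reduce to $0=0$; the left-hand side is $\psi(0)\phi(x)=0$, but the right-hand side is $\phi(x)\int_0^A\beta(x,a)\psi(a)\,da$, which is generically nonzero since $\beta\ge0$. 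A correct density proof has to exploit the structural hypothesis $(\mathcal{N}ull_\beta)$ (e.g.\ approximate $f\in C_c^\infty(\Omega\times(0,A))$ by $f+\eta_\epsilon(a)F(x)$ with $F=\int_0^A\beta f\,da$, $\eta_\epsilon(0)=1$ and $\operatorname{supp}\eta_\epsilon\subset[0,a_b)$ so that the correction does not feed back into the fertility integral), or obtain density a posteriori from the resolvent.

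Second, and more seriously, you never verify that the function $\varphi$ produced by the Duhamel formula lies in $D(\mathcal{A}_m)=D(\mathcal{A}_1)\cap D(\mathcal{A}_2)$ as the paper defines it, i.e.\ in $L^2(0,A;H^2(\Omega))$ with the Neumann condition. The homogeneous part $\pi(a)e^{-\lambda a}e^{a\Delta_N}g$ with $g=(I-K_\lambda)^{-1}\Psi_\lambda(f)$ only in $L^2(\Omega)$ satisfies $\|e^{a\Delta_N}g\|_{H^2}\lesssim a^{-1}\|g\|_{L^2}$, which is \emph{not} square-integrable in $a$ near $0$; so either you must show that $g$ actually lands in $D(\Delta_N)$ (which requires extra smoothness of $\beta$ in $x$ and a bootstrapping through the fixed-point equation), or the generation statement holds only for a suitable closure/realization of $\mathcal{A}_1+\mathcal{A}_2$ rather than for the bare intersection of domains. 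This regularity issue is precisely the delicate point that the cited generation theorems for age-structured diffusion semigroups are designed to handle, and your proposal passes over it silently; without it, the claimed closedness and the identity $(\lambda I-\mathcal{A}_m)(\lambda I-\mathcal{A}_m)^{-1}=I$ on $H$ are not established.
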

\begin{proof}
    See \cite{ref72}.
\end{proof}
Since \(\mathcal{A}_m\) generates a \(C_0\)-semigroup of linear operators on \(H\), we can define the mild solution of \eqref{eq1.1} accordingly, as will be detailed in subsequent sections.

Furthermore, we introduce the adjoint operator \(\mathcal{A}_m^*\). As specified in \cite{ref3}, the adjoint of \((\mathcal{A}_m, D(\mathcal{A}_m))\) in \(H\) is characterized by
\[
D(\mathcal{A}_m^*) = D(\mathcal{A}_0), \qquad \mathcal{A}_m^* \psi = \frac{\partial \psi}{\partial a} - \mu(a)\psi + \beta(x,a)\psi(x,0) + \triangle \psi,
\]
where the operator \(\mathcal{A}_0\) is defined as
\begin{align*}
 D(\mathcal{A}_0)=\{\varphi\in H|\varphi(x,.)\quad\hbox{is locally absolutely continuous on}\,[0,A),\\
 \lim_{a\rightarrow A}\varphi(x,a)=0 \quad\hbox{a.e. on}\, \Omega,\dfrac{\partial\varphi}{\partial\nu}=0 \,\text{on}\,\partial\Omega\,; \dfrac{\partial \varphi}{\partial a}-\mu(a)\varphi+\triangle\varphi\in H\}
\end{align*} 
with
\[
\mathcal{A}_0 \varphi = \frac{\partial \varphi}{\partial a} - \mu(a)\varphi + \triangle \varphi \quad (\varphi \in D(\mathcal{A}_0)).
\]

Since system \eqref{eq1.1} constitutes a boundary control problem,  we rewrite it in the following boundary input–output form:
\begin{equation}\label{e2.1}
\left\lbrace 
\begin{array}{ll}
\partial_{t}y(x,a,t)=\mathcal{A} y(x,a,t) & \text{ in }  Q,\\
\\y(x,a,0)=y_{0}(x,a)&\text{ in } Q_A,\\
\\(\delta_0-\Lambda) y(x,a,t)=\mathds{1}_{\omega}(x)v(x,t) &\text{ in }  Q_{T},\\
\\z(x,t)=\Lambda y(x,a,t)=\displaystyle\int_0^A\beta(x,a)y(x,a,t)da
\end{array}
\right.
\end{equation}
and  the operator 
\begin{align*}
    \mathcal{A} = \mathcal{A}_{m \mid H_1} \in \mathcal{L}(H_1, H)\qquad\text{generates a $C_0$-semigroup}\; \mathbb{T}=\left(\mathbb{T}(t)\right)_{t\geq 0}\; \text{on}\; H_1= \ker (\delta_0),
\end{align*}
(see \cite{ref78} for more details)  where
\begin{align*}
\mathcal{D}(\mathcal{A}) = \Big\{ \varphi \in D(\mathcal{A}_m) \, \mid \, \delta_0\varphi = 0 \Big\}.
\end{align*} 
The generator \(\mathcal{A}_m\) reduces to \(\mathcal{A}\) when \(\Lambda= 0\). More generally, \(\mathcal{A}_m\) is obtained by perturbing the domain \(D(\mathcal{A})\) via the operator \(\Lambda\). We therefore refer to \(\Lambda\) as the boundary perturbation of \(\mathcal{A}\).
  
Based on all the above, we introduce below the notion of an abstract non-homogeneous boundary problem.
\begin{definition}\cite{ref48}\label{d2.2}
 An abstract non-homogeneous boundary problem on \( L^2(\omega) \), \( D(\mathcal{A}_m) \), and \( H \) is defined as any pair \( (\mathcal{A}_m, \delta_0) \in \mathcal{L}(D(\mathcal{A}_m), H) \times \mathcal{L}(D(\mathcal{A}_m), L^2(\omega)) \) such that there exists \( \lambda \in \mathbb{C} \) for which the following conditions are satisfied : 
\begin{itemize} 
\item[1.] \( \delta_0 \) is surjective,  
\item[2.] \( \ker (\delta_0) \) is dense in \( H \),  
\item[3.] \( (\lambda I -\mathcal{A}_m)|_{\ker (\delta_0)} \) is bijective.
\end{itemize}
\end{definition} 
 It is evident that \(\delta_{0}\) is surjective. Indeed, fix any \(g \in L^{2}(\omega)\) and \(\lambda \in \mathbb{C}\). We must solve  
 \begin{equation}\label{e2.2}
	\left\lbrace\begin{array}{ll}
	\left(\lambda-\mathcal{A}_m\right)f=0\\
	\delta_0f=u\in L^2(\omega)
	\end{array}\right.
\end{equation}

 Set  
 \[
   f(x,a) \;=\; \exp\!\Bigl(-\!\int_{0}^{a}\mu(s)\,ds\Bigr)\,\hat f(x,a).
 \]
 A straightforward calculation shows that \(\hat f\) satisfies  
 \begin{equation}
	\left\lbrace\begin{array}{ll}
		\partial_a\hat{f}-\Delta\hat{f}+\lambda \hat{f}=0&\hbox{ in }\Omega\times(0,A)\\
		\dfrac{\partial \hat{f}}{\partial \nu}=0&\hbox{ in }\partial\Omega\times(0,A)\\
		\hat{f}(.,0)=\mathds{1}_{\omega}(x)u&\hbox{ in }\Omega
	\end{array}\right.
\end{equation}
 This is the classical linear heat  problem with homogeneous Neumann boundary conditions on \(\partial\Omega\) and initial data \(g\) at \(a=0\). Standard parabolic theory guarantees a unique solution  
 \[
   \hat f \;\in\; H^{1}\bigl(0,A;L^{2}(\Omega)\bigr)\,\cap\,L^{2}\bigl(0,A;H^{2}(\Omega)\bigr).
 \]
 Consequently, \(f\in H^1(0,A;L^2(\Omega)) \cap L^2(0,A;H^2(\Omega))\) and satisfies \eqref{e2.2}. Since for every \(u \in L^{2}(\omega)\) there is a unique such \(f\in D(\mathcal{A}_{m})\), the operator \(\delta_{0}\) is surjective.

 Because \(\mathcal{A}\) generates a \(C_0\)-semigroup on \(H_1\) and \(\delta_{0}\) is surjective, we obtain the following Lemma, which has already been proven in \cite{ref77} :
\begin{lemma}\label{l2.3}
The following assertions are true for each $\lambda\in \rho(\mathcal{A}) :$
\begin{itemize}
\item[(i)]$D(\mathcal{A}_m)=D(\mathcal{A})\oplus ker(\lambda I-\mathcal{A}_m),$
\item[(ii)] $\delta_{0_{\mid ker(\lambda I-\mathcal{A}_m)}}$ is inversible and the operator $\mathbb{D}_{\lambda}:=\left(\delta_{0_{|\ker(\lambda-\mathcal{A}_m)}}\right)^{-1}:L^2(\omega)\longrightarrow \ker (\lambda-\mathcal{A}_m)\subset D(\mathcal{A}_m)$ is bounded and $\delta_0\mathbb{D}_{\lambda}=I.$
\end{itemize}
\end{lemma}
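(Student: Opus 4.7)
The natural strategy is to prove (ii) first, because once $\delta_0$ is known to restrict to a bounded bijection between $\ker(\lambda I-\mathcal{A}_m)$ and $L^2(\omega)$, assertion (i) reduces to writing down the obvious splitting. I would record at the outset that $\mathcal{A}_m$ is a closed operator on $H$ (since by Lemma \ref{le1.1} it generates a $C_0$-semigroup) and that, by Definition \ref{d2.2}, the trace $\delta_0$ is a bounded linear map from $D(\mathcal{A}_m)$ (endowed with its graph norm) into $L^2(\omega)$.

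For (ii), injectivity is immediate: if $\varphi\in\ker(\lambda I-\mathcal{A}_m)$ satisfies $\delta_0\varphi=0$, then $\varphi\in D(\mathcal{A}_m)\cap\ker\delta_0=D(\mathcal{A})$ and $(\lambda I-\mathcal{A})\varphi=(\lambda I-\mathcal{A}_m)\varphi=0$; since $\lambda\in\rho(\mathcal{A})$, this forces $\varphi=0$. Surjectivity is precisely what the computation surrounding \eqref{e2.2} already delivers: for each $u\in L^2(\omega)$ the function $f(x,a)=\pi(a)\hat f(x,a)$, where $\hat f$ solves the Neumann heat-type problem with initial datum $\mathds{1}_\omega u$, lies in $D(\mathcal{A}_m)$, satisfies $(\lambda I-\mathcal{A}_m)f=0$, and fulfils $\delta_0 f=u$. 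Boundedness of $\mathbb{D}_\lambda$ would then be obtained from the closed graph theorem: if $u_n\to u$ in $L^2(\omega)$ and $\mathbb{D}_\lambda u_n\to g$ in $D(\mathcal{A}_m)$ with the graph norm, closedness of $\lambda I-\mathcal{A}_m$ yields $g\in\ker(\lambda I-\mathcal{A}_m)$, continuity of $\delta_0$ on $D(\mathcal{A}_m)$ gives $\delta_0 g=u$, and the injectivity just established identifies $g=\mathbb{D}_\lambda u$. The identity $\delta_0\mathbb{D}_\lambda=I$ is built into the definition of the inverse.

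For (i), given an arbitrary $\varphi\in D(\mathcal{A}_m)$, I would set
\[
\varphi_2:=\mathbb{D}_\lambda(\delta_0\varphi)\in\ker(\lambda I-\mathcal{A}_m),\qquad \varphi_1:=\varphi-\varphi_2,
\]
and note that $\delta_0\varphi_1=\delta_0\varphi-\delta_0\mathbb{D}_\lambda(\delta_0\varphi)=0$, so that $\varphi_1\in D(\mathcal{A})$. Uniqueness of the decomposition reduces to the fact that $D(\mathcal{A})\cap\ker(\lambda I-\mathcal{A}_m)=\{0\}$, which is precisely the injectivity statement of (ii), ensuring the sum is direct. The only genuinely technical point is the boundedness of $\mathbb{D}_\lambda$: a fully hands-on alternative to the closed graph theorem is to revisit the parabolic estimates for $\hat f$ in terms of the boundary datum $u\in L^2(\omega)$ and to transfer them to $f$ through the multiplication by $\pi(a)$, but the closed graph argument above is in my view the cleanest route.
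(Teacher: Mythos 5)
Your proof is correct. Note, however, that the paper does not actually prove Lemma \ref{l2.3}: it is stated with a pointer to an external reference, so there is no in-paper argument to compare against. What you have written is the standard Greiner-type argument and it holds together: injectivity of $\delta_{0}|_{\ker(\lambda I-\mathcal{A}_m)}$ from $\lambda\in\rho(\mathcal{A})$, surjectivity from the explicit resolution of \eqref{e2.2} that the paper carries out just before the lemma, boundedness of $\mathbb{D}_\lambda$ by the closed graph theorem (which applies because $\mathcal{A}_m$, being a semigroup generator by Lemma \ref{le1.1}, is closed, so $\ker(\lambda I-\mathcal{A}_m)$ is complete in the graph norm, and $\delta_0\in\mathcal{L}(D(\mathcal{A}_m),L^2(\omega))$ by Definition \ref{d2.2}), and then the splitting $\varphi=(\varphi-\mathbb{D}_\lambda\delta_0\varphi)+\mathbb{D}_\lambda\delta_0\varphi$ for (i). One remark: your surjectivity step leans on the concrete parabolic construction, which ties the lemma to this particular model; the purely abstract alternative is to take any $g\in D(\mathcal{A}_m)$ with $\delta_0 g=u$ (condition 1 of Definition \ref{d2.2}) and set $f=g-(\lambda I-\mathcal{A})^{-1}(\lambda I-\mathcal{A}_m)g$, which lies in $\ker(\lambda I-\mathcal{A}_m)$ and has trace $u$ using only conditions 1 and 3 of the definition. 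Either route is acceptable here; the abstract one is what the cited literature uses and is what makes the lemma valid for every abstract boundary problem rather than this specific one.
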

Operator $\mathbb{D}_{\lambda}$ is crucial for reformulating boundary control problem \eqref{e2.1} as a distributed control problem.

We now consider the following Lemma, which allows us to rewrite problem \eqref{e2.1} as a problem with a source term. 
\begin{lemma}\label{l2.4}
    Let $\lambda\in\rho(\mathcal{A})$ 
\begin{itemize}
    \item[1.]  There exists a unique operator \(\mathcal{B} \in \mathcal{L}(L^2(\omega), H_{-1})\) such that
    \begin{align}\label{e2.4}
        \mathcal{A}_m = \mathcal{A}_{-1} + \mathcal{B}\, \delta_0.
    \end{align}
\item[2.]   Moreover, \((\lambda I -\mathcal{A}_{-1})^{-1}\mathcal{B} \in \mathcal{L}(L^2(\omega); D(\mathcal{A}_m))\) and
\begin{align}
    \delta_0\, (\lambda I - \mathcal{A}_{-1})^{-1}\mathcal{B} = I.
\end{align}
\end{itemize}
\end{lemma}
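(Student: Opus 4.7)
The plan is to define $\mathcal{B}$ explicitly through the Dirichlet lifting operator $\mathbb{D}_{\lambda}$ supplied by Lemma \ref{l2.3}. Setting
\[
\mathcal{B} := (\lambda I - \mathcal{A}_{-1})\,\mathbb{D}_{\lambda},
\]
one has $\mathbb{D}_{\lambda}\in \mathcal{L}(L^2(\omega),H)$ and, by the very construction of $H_{-1}$, the operator $\lambda I - \mathcal{A}_{-1}$ is an isomorphism from $H$ onto $H_{-1}$. Composition of these two bounded maps gives $\mathcal{B}\in \mathcal{L}(L^2(\omega),H_{-1})$ at no cost.

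To obtain the identity \eqref{e2.4} on $D(\mathcal{A}_m)$, I would use the direct-sum decomposition of Lemma \ref{l2.3}(i). Given $f\in D(\mathcal{A}_m)$, write uniquely $f=f_0+f_1$ with $f_0\in D(\mathcal{A})$ and $f_1\in \ker(\lambda I-\mathcal{A}_m)$; applying $\delta_0$ (which annihilates $D(\mathcal{A})=\ker\delta_0$) and using Lemma \ref{l2.3}(ii) yields $f_1=\mathbb{D}_{\lambda}\,\delta_0 f$. Then
\[
\mathcal{A}_m f \;=\; \mathcal{A} f_0 + \mathcal{A}_m f_1 \;=\; \mathcal{A} f_0 + \lambda f_1,
\]
while the definition of $\mathcal{B}$ gives $(\lambda I-\mathcal{A}_{-1})f_1=\mathcal{B}\,\delta_0 f$, hence
\[
\mathcal{A}_{-1}f+\mathcal{B}\,\delta_0 f \;=\; \mathcal{A} f_0 + \mathcal{A}_{-1} f_1 + \bigl(\lambda f_1-\mathcal{A}_{-1}f_1\bigr) \;=\; \mathcal{A} f_0 + \lambda f_1,
\]
which matches $\mathcal{A}_m f$. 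Uniqueness of $\mathcal{B}$ follows from the surjectivity of $\delta_0$ already established before Lemma \ref{l2.3}: any other $\mathcal{B}'$ satisfying \eqref{e2.4} would verify $(\mathcal{B}-\mathcal{B}')\,\delta_0 f=0$ for every $f\in D(\mathcal{A}_m)$, and since $\delta_0(D(\mathcal{A}_m))=L^2(\omega)$, this forces $\mathcal{B}=\mathcal{B}'$.

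Assertion 2 is then almost immediate: by the definition of $\mathcal{B}$,
\[
(\lambda I-\mathcal{A}_{-1})^{-1}\mathcal{B} \;=\; \mathbb{D}_{\lambda},
\]
so Lemma \ref{l2.3}(ii) gives both that its range lies in $\ker(\lambda I-\mathcal{A}_m)\subset D(\mathcal{A}_m)$ and that $\delta_0(\lambda I-\mathcal{A}_{-1})^{-1}\mathcal{B}=\delta_0\mathbb{D}_{\lambda}=I$. The only delicate point in the argument is conceptual rather than technical: one must remember that $\mathcal{A}_{-1}$ denotes the unique bounded extension of $\mathcal{A}$ from $D(\mathcal{A})$ to $H$ taking values in $H_{-1}$, so the identity \eqref{e2.4} is \emph{a priori} an equality in $H_{-1}$, which happens to land in $H$ precisely because the ill-defined boundary contribution $\mathcal{A}_{-1}f_1$ is compensated by $\mathcal{B}\delta_0 f$. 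Keeping track of these three functional spaces $H_1\subset H\subset H_{-1}$ is the main bookkeeping subtlety; there is no genuine analytic obstacle beyond it.
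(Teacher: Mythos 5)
Your proof is correct and follows essentially the same route as the paper: both construct $\mathcal{B}$ from the right inverse $\mathbb{D}_{\lambda}$ of Lemma \ref{l2.3} and derive uniqueness from the surjectivity of $\delta_0$; your definition $\mathcal{B}=(\lambda I-\mathcal{A}_{-1})\mathbb{D}_{\lambda}$ coincides with the paper's $(\mathcal{A}_m-\mathcal{A}_{-1})\mathbb{D}_{\lambda}$ because $\mathcal{A}_m=\lambda I$ on $\operatorname{Ran}\mathbb{D}_{\lambda}=\ker(\lambda I-\mathcal{A}_m)$. If anything, your version is slightly tighter: the explicit verification of \eqref{e2.4} via the decomposition $D(\mathcal{A}_m)=D(\mathcal{A})\oplus\ker(\lambda I-\mathcal{A}_m)$ is spelled out more fully than in the paper, and the observation $(\lambda I-\mathcal{A}_{-1})^{-1}\mathcal{B}=\mathbb{D}_{\lambda}$ makes assertion 2 immediate where the paper takes a longer detour through its identity \eqref{e2.7}.
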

\begin{proof}[of Lemma \ref{l2.4}]
\begin{itemize}
\item[1.]  \(\mathcal{B} \in \mathcal{L}(L^2(\omega), H_{-1})\) satisfies \eqref{e2.4} if and only if  
\begin{align}
    \mathcal{B}\,\delta_0 = \mathcal{A}_m - \mathcal{A}_{-1}.
\end{align}
Since \(\delta_0 \in \mathcal{L}(D(\mathcal{A}_m), L^2(\omega))\) is surjective by condition (1) of Definition \ref{d2.2},  we introduce its right-inverse \(\mathbb{D}_{\lambda}\in \mathcal{L}(L^2(\omega), D(\mathcal{A}_m))\), as constructed in Lemma \ref{l2.3}. 

Suppose that the operator \(\mathcal{B}\) exists. Because \(\delta_0\,\mathbb{D}_{\lambda} = I \in \mathcal{L}(L^2(\omega))\), it follows from \eqref{e2.4} that for every \(u \in L^2(\omega)\)
\[
\mathcal{B}u = \mathcal{B}\,\delta_0(\mathbb{D}_{\lambda}u) = (\mathcal{A}_m - \mathcal{A}_{-1})\,\mathbb{D}_{\lambda}u,
\]
so that
\[
\mathcal{B} = (\mathcal{A}_m - \mathcal{A}_{-1})\,\mathbb{D}_{\lambda}.
\]
This demonstrates the uniqueness of \(\mathcal{B}\).

Conversely, define \(\mathcal{B} = (\mathcal{A}_m - \mathcal{A}_{-1})\,\mathbb{D}_{\lambda}\) and show that \(\mathcal{B} \in \mathcal{L}(L^2(\omega), H_{-1})\). Since \(\mathbb{D}_{\lambda} \in \mathcal{L}(L^2(\omega), D(\mathcal{A}_m))\) and \(\mathcal{A}_m \in \mathcal{L}(D(\mathcal{A}_m), H)\), and because the embedding
$$H_1\subset H \subset H_{-1}$$ is continuous, we have \(\mathcal{A}_m\,\mathbb{D}_{\lambda} \in \mathcal{L}(L^2(\omega), H_{-1})\). Moreover, the continuity of the embedding $D(\mathcal{A}_m) \subset H$ ensures that \(\mathbb{D}_{\lambda} \in \mathcal{L}(L^2(\omega), H)\), so that \(\mathcal{A}_{-1}\,\mathbb{D}_{\lambda} \in \mathcal{L}(L^2(\omega), H_{-1})\). Hence, \(\mathcal{B} \in \mathcal{L}(L^2(\omega), H_{-1})\).

\item[2.] Let \(\lambda \in \rho(\mathcal{A})\). By definition, \((\lambda I - \mathcal{A}_{-1})\) is invertible with inverse \((\lambda I - \mathcal{A}_{-1})^{-1} \in \mathcal{L}(H_{-1}, H)\), and therefore \((\lambda I - \mathcal{A}_{-1})^{-1}\mathcal{B} \in \mathcal{L}(L^2(\omega), H)\). We now show that, in fact, \((\lambda I - \mathcal{A}_{-1})^{-1}\mathcal{B} \in \mathcal{L}(L^2(\omega), D(\mathcal{A}_m))\). Since $\mathcal{B} = (\mathcal{A}_m - \mathcal{A}_{-1})\,\mathbb{D}_{\lambda},
$
we have
\[
(\mathcal{A}_m - \mathcal{A}_{-1})\,\mathbb{D}_{\lambda} = (\lambda I - \mathcal{A}_{-1})\,\mathbb{D}_{\lambda} - (\lambda I - \mathcal{A}_m)\,\mathbb{D}_{\lambda},
\]
with \((\lambda I - \mathcal{A}_m)\,\mathbb{D}_{\lambda} \in \mathcal{L}(L^2(\omega), H)\). Therefore,
\begin{align}\label{e2.7}
(\lambda I - \mathcal{A}_{-1})^{-1}\mathcal{B} = \mathbb{D}_{\lambda} - (\lambda I - \mathcal{A}_{-1})^{-1}(\lambda I - \mathcal{A}_m)\,\mathbb{D}_{\lambda}    
\end{align}
\begin{align*}
    = \mathbb{D}_{\lambda} - (\lambda I - \mathcal{A})^{-1}(\lambda I - \mathcal{A}_m)\,\mathbb{D}_{\lambda}.
\end{align*}
Since \((\lambda I - \mathcal{A})^{-1} \in \mathcal{L}(H, H_1)\), it follows that
\[
(\lambda I - \mathcal{A})^{-1}(\lambda I - \mathcal{A}_m)\,\mathbb{D}_{\lambda} \in \mathcal{L}(L^2(\omega), H_1),
\]
and hence,
\[
(\lambda I - \mathcal{A})^{-1}(\lambda I - \mathcal{A}_m)\,\mathbb{D}_{\lambda} \in \mathcal{L}(L^2(\omega), D(\mathcal{A}_m))\quad\text{since}\; \|\cdot\|_{H_1} = \|\cdot\|_{D(\mathcal{A}_m)}.
\]
This implies, by \eqref{e2.7},
\[
(\lambda I - \mathcal{A}_{-1})^{-1}\mathcal{B} \in \mathcal{L}(L^2(\omega), D(\mathcal{A}_m)).
\]
Moreover, 
\[
(\lambda I - \mathcal{A}_{-1})^{-1}\mathcal{B} - \mathbb{D}_{\lambda} \in \mathcal{L}(L^2(\omega), H_1),
\]
from which it follows that
\[
\delta_0(\lambda I - \mathcal{A}_{-1})^{-1}\mathcal{B} = I.
\]
\end{itemize}
\end{proof} 
Building on Lemma \ref{l2.4}, the transformation \eqref{e2.4} allows us to rewrite system \eqref{e2.1} as follows:

\begin{equation}\label{e2.8}
\left\lbrace 
\begin{array}{ll}
\partial_{t}y(x,a,t)=\mathcal{A}_{-1} y(x,a,t)+\mathcal{B}z(x,t)+\mathcal{B}\mathds{1}_{\omega}(x)v & \text{ in }  Q,\\
\\y(x,a,0)=y_{0}(x,a)&\text{ in } Q_A,\\
\\z(x,t)=C y(x,a,t)
\end{array}
\right.
\end{equation}
where $C=\Lambda_{\mid D(\mathcal{A})}.$ Furthermore, by \cite[Theorem 4.1]{ref76}, system \eqref{e2.8} is equivalent to
\begin{equation}\label{e2.9}
\left\lbrace 
\begin{array}{ll}
\partial_{t}y(x,a,t)=(\mathcal{A}_{-1} +\mathcal{B}C_{\Lambda})y(x,a,t)+\mathcal{B}\mathds{1}_{\omega}(x)v & \text{ in }  Q,\\
\\y(x,a,0)=y_{0}(x,a)&\text{ in } Q_A,
\end{array}
\right.
\end{equation}
where $C_{\Lambda}$ is the Yosida extension of $C$ for $\mathcal{A}_{-1}.$  In this framework, with \(\Lambda\) restricted to \(D(\mathcal{A})\), the mild solution of \eqref{e2.9} can be written as
 \begin{align}
     y(t)
 = \mathcal{T}(t)\,y_0
 + \Phi_t v,
 \quad t\in(0,T),
 \end{align}
 where $\mathcal{T}$ is the semigroup generated by \((\mathcal{A}_{-1} + \mathcal{B}\Lambda)\) and  
 \begin{align}
    \Phi_t v
 = \int_{0}^{t}
   \mathcal{T}_{-1}(t - s)\,\mathcal{B}\,\mathds{1}_{\omega}(x)v(s)
 \,\mathrm{d}s,
 \quad t\ge0. 
 \end{align}
It is clear that \((\mathcal{A}_{-1} + \mathcal{B}\Lambda)\) coincides with \(\mathcal{A}_m\) (see \cite{ref76} for more details). Indeed, define
\[
\mathcal{A}_{\Lambda} x := \bigl(\mathcal{A}_{-1} + \mathcal{B}\Lambda\bigr)x,
\quad
x \in D(\mathcal{A}_{\Lambda})
:= \bigl\{x \in D(\Lambda) \mid (\mathcal{A}_{-1} + \mathcal{B}\Lambda)x \in H\bigr\},\; D(\Lambda)= \bigl\{x \in H \mid \Lambda x \in H\bigr\}.
\]

\begin{itemize}
  \item \(\mathcal{A}_m \subset \mathcal{A}_{\Lambda}\).  
    Let \(x \in D(\mathcal{A}_m)\). Since \(D(\mathcal{A}_m)\subset D(\Lambda)\) and \(\delta_0 x = \Lambda x\),  
    \[
      (\mathcal{A}_{-1} + \mathcal{B}\Lambda)x
      = \mathcal{A}_{-1}x + \mathcal{B}\,\delta_0 x
      = \mathcal{A}_m x \in H.
    \]
    Hence \(x \in D(\mathcal{A}_{\Lambda})\) and \(\mathcal{A}_{\Lambda}x = \mathcal{A}_m x\).

  \item \(\mathcal{A}_{\Lambda} \subset \mathcal{A}_m\).  
    Let \(x \in D(\mathcal{A}_{\Lambda})\). Then
    \[
      (\mathcal{A}_{-1} + \mathcal{B}\Lambda)x \in H
      \;\Longrightarrow\;
      \mathcal{A}_{-1}(x - \mathbb{D}_{\lambda}\Lambda x)
      - \lambda\,\mathbb{D}_{\lambda}\Lambda x \in H,
    \]
    since \(\mathcal{B} = (\lambda - \mathcal{A}_{-1})\,\mathbb{D}_{\lambda}\) for any \(\lambda\in\rho(\mathcal{A})\).  
    In particular, \(\mathcal{A}_{-1}(x - \mathbb{D}_{\lambda}\Lambda x)\in H\), so \(x - \mathbb{D}_{\lambda}\Lambda x\in D(\mathcal{A})\).  
    Since \(D(\mathcal{A})\subset D(\mathcal{A}_m)\) and
    \[
      \delta_0(x - \mathbb{D}_{\lambda}\Lambda x)
      = \delta_0 x - \delta_0(\mathbb{D}_{\lambda}\Lambda x)
      = \Lambda x - \Lambda x = 0,
    \]
    it follows that \(x\in D(\mathcal{A}_m)\) and
    \[
      \mathcal{A}_m x
      = \mathcal{A}_{-1}x + \mathcal{B}\,\delta_0 x
      = (\mathcal{A}_{-1} + \mathcal{B}\Lambda)x
      = \mathcal{A}_{\Lambda}x\in H.
    \]
\end{itemize}

Since both inclusions hold, we conclude \(D(\mathcal{A}_{\Lambda}) = D(\mathcal{A}_m)\) and \(\mathcal{A}_{\Lambda} = \mathcal{A}_m\), as claimed.

We are concerned with the following abstract dynamics on $H$ : 

\begin{equation*}
\left\lbrace 
\begin{array}{ll}
&\quad \text{(a)}\quad y(t) \;=\; \mathcal{T}(t)\,y_0 \;+\; \Phi_tv, \quad y_0 \in H, \\
&\quad \text{(b)}\quad \Phi_tv \;=\; (\lambda I-\mathcal{A})\,\displaystyle\int_0^t \mathcal{T}_{-1}(t-s)\,(\lambda I-\mathcal{A})^{-1} \mathcal{B}\,\mathds{1}_{\omega}(x)v(s)\,\mathrm{d}s, \\
&\quad \text{(c)}\quad \mathcal{B} \;\in\; \mathcal{L}\bigl(L^2(\omega);\,H_{-1}\bigr)
\quad \text{so that} \quad (\lambda I-\mathcal{A})^{-1} \mathcal{B} \;\in\; \mathcal{L}\bigl(L^2(\omega);\,H_1\bigr),\; (\lambda I-\mathcal{A}) \in \mathcal{L}(H_1, H).
\end{array}
\right.
\end{equation*}
For practical purposes, one typically seeks continuous \(H\)-valued functions. It is therefore necessary to characterize a class of control operators \(\mathcal{B}\) that ensure the state of system  \eqref{e2.9} remains in \(H\). The following result provides a characterization of the well-posedness of the boundary input-output system \eqref{eq1.1}.
\begin{lemma}\label{l2.5}
Consider the unique control operator $\mathcal{B}\in \mathcal{L}(L^2(\omega), H_{-1})$ and the generator \(\mathcal{A}_{-1}\) associated with \((D(\mathcal{A}_m), \delta_0)\) generator of a \(C_0\) semigroup \(H\longrightarrow H_{-1}\). For some fixed \(T>0\) and any \(v \in L^2((0,+\infty); L^2(\omega))\), we have
\begin{align}\label{e2.12}
    \Phi_T(v) \in H.
\end{align} 
\end{lemma}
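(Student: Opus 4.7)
The plan is to establish admissibility of $\mathcal{B}$ by reducing the transport part along characteristics to the action of the Neumann heat semigroup on $L^2(\Omega)$, whose contractivity will then deliver the required $H$-regularity of the state. I will first treat the decoupled system obtained by switching off the renewal feedback (i.e.\ by setting $\Lambda \equiv 0$), for which an explicit pointwise representation is available, and then re-insert $\Lambda$ as a bounded observation operator via a Volterra perturbation argument.

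For the first stage, let $S=(S(t))_{t\geq 0}$ denote the $C_0$-semigroup on $L^2(\Omega)$ generated by the Neumann Laplacian. For a smooth control $v$ and zero initial data, the mild solution of the decoupled boundary problem is given by the classical characteristics formula
\[
y(x,a,t) \;=\; \pi(a)\,\bigl[S(a)\bigl(\mathds{1}_{\omega} v(\cdot,\,t-a)\bigr)\bigr](x) \quad\text{for } 0<a<\min(t,A),
\]
and $y(x,a,t)=0$ otherwise. Since $\pi\leq 1$ and $\|S(a)\|_{\mathcal{L}(L^2(\Omega))}\leq 1$, Fubini yields
\[
\|y(\cdot,\cdot,T)\|_H^2 \;\leq\; \int_0^{\min(T,A)} \|v(\cdot,T-a)\|_{L^2(\omega)}^2\, da \;\leq\; \|v\|_{L^2(0,T;L^2(\omega))}^2.
\]
A density argument then extends the bound to all $v\in L^2((0,\infty);L^2(\omega))$, showing that the control map $\Phi_T^{0}$ of the unperturbed system is continuous into $H$.

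For the second stage, hypothesis $(H_2)$ guarantees that the renewal observation $\Lambda\varphi(x)=\int_0^A \beta(x,a)\varphi(x,a)\,da$ is bounded from $H$ to $L^2(\omega)$. Combining the identification $\mathcal{A}_m=\mathcal{A}_{-1}+\mathcal{B}\Lambda$ with the variation-of-parameters formula relating $\mathcal{T}$ and $\mathbb{T}$, one obtains the Volterra identity
\[
\Phi_T v \;=\; \Phi_T^{0} v \;+\; \int_0^T \mathbb{T}_{-1}(T-r)\,\mathcal{B}\,\Lambda\, \Phi_r v \,dr .
\]
The second term is $\Phi_T^{0}$ applied to the $L^2(0,T;L^2(\omega))$-function $r\mapsto \Lambda\,\Phi_r v$, so a Gronwall-type iteration built on the first-stage estimate yields $\|\Phi_T v\|_H\leq C(T)\|v\|_{L^2(0,T;L^2(\omega))}$, and in particular $\Phi_T v\in H$.

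The main obstacle is the rigorous passage from the pointwise characteristics formula, which is classical but only established for smooth data, to the abstract mild-solution identity $\Phi_T v=\int_0^T \mathcal{T}_{-1}(T-s)\,\mathcal{B}\,\mathds{1}_{\omega}v(s)\,ds$, which a priori only lives in $H_{-1}$. The bridge is Lemma \ref{l2.4}: the identification $\mathcal{B}=(\mathcal{A}_m-\mathcal{A}_{-1})\mathbb{D}_\lambda$ shows that, for smooth $v$, the abstract integral coincides with the pointwise expression above; the contractivity estimate of the first stage then allows $L^2$-density to transfer both the identity and the $H$-valuedness to arbitrary $v\in L^2((0,\infty);L^2(\omega))$.
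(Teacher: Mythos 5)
Your proof is correct in substance, but it follows a genuinely different route from the paper. The paper's argument is abstract and very short: it writes $\Phi_T v=(\lambda I-\mathcal{A})\int_0^T\mathcal{T}_{-1}(T-s)(\lambda I-\mathcal{A})^{-1}\mathcal{B}\,\mathds{1}_{\omega}v(s)\,ds$, asserts that $\Phi_T$ is closed and invokes the closed graph theorem to conclude boundedness into $H$, and then uses the concatenation identity $\Phi_T(u\underset{T-t}{\diamondsuit}v)=\mathcal{T}_t\Phi_{T-t}u+\Phi_t v$ to get the time-uniform bound $\|\Phi_t\|\le\|\Phi_T\|$. You instead prove admissibility constructively: the explicit characteristics representation $y(x,a,t)=\pi(a)\,[S(a)\mathds{1}_\omega v(\cdot,t-a)](x)$ for the decoupled system (which matches the formula the paper itself uses in Step 3 of the proof of Theorem \ref{th1.4}), combined with $\pi\le1$ and the $L^2$-contractivity of the Neumann heat semigroup, gives the quantitative bound $\|\Phi_T^0v\|_H\le\|v\|_{L^2(0,T;L^2(\omega))}$; the renewal term is then restored as a Desch--Schappacher-type perturbation $\mathcal{B}\Lambda$ with $\Lambda$ bounded under $(H_2)$, and a Volterra/Gronwall iteration transfers the estimate to $\Phi_T$. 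What your approach buys is an explicit constant and, more importantly, a proof that does not presuppose what is to be shown: the paper's appeal to the closed graph theorem tacitly assumes $\Phi_T$ is everywhere defined as a map into $H$ with closed graph, which is essentially the statement being proved, whereas your Stage 1 estimate establishes $H$-valuedness directly on a dense class and extends it by continuity. Two points deserve care in a polished write-up: (i) $\Lambda\varphi=\int_0^A\beta(\cdot,a)\varphi(\cdot,a)\,da$ takes values in $L^2(\Omega)$ rather than $L^2(\omega)$ (the paper is itself inconsistent here), so the Volterra term should be fed through $\mathcal{B}$ acting on $L^2(\Omega)$-data; (ii) the Volterra identity requires $r\mapsto\Lambda\Phi_r v$ to lie in $L^2(0,T)$ a priori, so the Gronwall step should be run either on a Picard iteration of the Volterra series or locally in time and then patched, as you hint at but do not spell out.
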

\begin{proof}[of Lemma \ref{l2.5}]
The system \eqref{e2.9} admits a mild solution in \(H_{-1}\) of the form
\begin{align}\label{ee2.13}
    y(\cdot,\cdot,t)=\mathcal{T}(t)y_0(\cdot,\cdot)+\int_0^t \mathcal{T}_{-1}(t-s)\,\mathcal{B}\,\mathds{1}_{\omega}(x)v(s)\,\mathrm{d}s,\quad t\in (0,\infty).
\end{align}
We rewrite the integral form on the right-hand side in the form of equality (b). Thus, for some \(T>0\), we have
\[
\Phi_Tv= (\lambda I-\mathcal{A})\,\int_0^T \mathcal{T}_{-1}(T-s)\,(\lambda I-\mathcal{A})^{-1}\mathcal{B}\,\mathds{1}_{\omega}(x)v(s)\,\mathrm{d}s.
\]
It follows that \(\Phi_T\) is closed. Therefore, by the closed graph theorem, it is bounded in \(H\).

Let's now show that it is bounded independently of time.  Let $t\in (0,T),$
\begin{align}\label{e2.13}
    \Phi_{T}(u\underset{T-t}{\diamondsuit} v)=\mathcal{T}_{t}\Phi_{T-t}u+\Phi_{t}v
\end{align}
 where $u\underset{T}{\diamondsuit}v$\footnote{the function in $L^2((0,+\infty); L^2(\omega))$ defined by \begin{equation*}
(u\underset{T}{\diamondsuit}v)(t)=\left\lbrace 
\begin{array}{ll}
u(t),\quad t\in (0,T),\\
v(t-T),\quad t\geq T.
\end{array}
\right.
\end{equation*}} is the $T-$concatenation of $u$ and $v$ in $L^2((0,\infty); L^2(\omega)).$ By taking \( u=0 \) in the composition property \eqref{e2.13} and choosing \(\|v\| = 1\), we obtain
\[
\Phi_{T}\Bigl(0\underset{T-t}{\diamondsuit} v\Bigr) = \Phi_{t}(v),
\]
which implies
\[
\|\Phi_{t}\| = \Bigl\|\Phi_{T}\bigl(0\underset{T-t}{\diamondsuit} v\bigr)\Bigr\| \leq \|\Phi_{T}\|.
\]
Thus, we conclude that
\[
\|\Phi_{t}\| \leq \|\Phi_{T}\|, \quad \text{for } t \leq T.
\]
In conclusion, \eqref{e2.12} is satisfied and system \eqref{eq1.1} is well-posed.
\end{proof}
Moreover, the solution depends continuously on the data in the following sense :
\begin{definition}\label{d2.6}
  Let \(T>0\), \(y_0\in H\), and \(v\in L^2(\omega\times(0,T))\). Then the system \eqref{eq1.1} admits a unique solution \(y\), and there exists a constant \(C(T)>0\) such that
  \[
    \|y\|_{L^2(0,T;H)}
    \;\le\;
    C(T)\,\bigl(\|y_0\|_H + \|v\|_{L^2(\omega\times(0,T))}\bigr).
  \]
\end{definition}
\begin{remark}
System $\eqref{eq1.1}$ is said to be  positively well-posed if it is well-posed and, moreover, every nonnegative initial state together with any nonnegative control yields a nonnegative state for all $t\in(0,T)$.

Formally, define the Hilbert space

$$
H_+ \;=\;\bigl\{\,y\in H\;\big|\;y\ge0\;\text{a.e. }(x,a)\in\Omega\times(0,A)\bigr\}.
$$

Then, for any control $v\in L^2(\omega\times(0,T))$ with $v\ge0$ a.e., and any initial datum $y_0\in H_+$, the solution $y$ of the form \eqref{ee2.13} remains in $H_+$ for all $t\in (0,T)$.
\end{remark}
\section{Main Results}
 In this section, we present the main results of this paper, namely the null controllability of model \eqref{eq1.1} and the turnpike property. Prior to that, we examine the null-controllability and convergence of a control system that is localized on \(\omega \times [0,a_0]\) in the framework of a population dynamics model. Following the approach of \cite{ref18}, we investigate the null-controllability of the distributed problem \eqref{eq1.5} below while explicitly characterizing the control. Consider the following model :
\begin{equation}\label{eq1.5}
\left\lbrace 
\begin{array}{ll}
\partial_{t}y(x,a,t)+\partial_{a}y(x,a,t)-\triangle y(x,a,t)+\mu(a) y(x,a,t)=\mathds{1}_{\lbrace\omega\times[0,a_0]\rbrace}(x,a)v(x,a,t) &\text{in }\quad Q,\\
 \\\partial_{\nu}y(x,a,t)=0 &\text{on }\quad \Sigma,\\
 \\y(x,0,t)=\displaystyle\int_{0}^{A}\beta(x,a)y(x,a,t)da &\text{in }\quad Q_{T},\\
 \\y(x,a,0)=y_{0} &\text{in }\quad Q_{A},
\end{array}
\right.
\end{equation} 
with \(v\) as the control function. A result of null-controllability is already established in \cite{ref3}  by combining final-state observability estimates with the use of characteristics and the associated semigroup.
\begin{proposition}\label{th1.3}
Consider the assumptions (H1-H2). Furthermore, suppose that the fertility rate is such that
\begin{equation*}
\beta(x,a)=0\qquad\qquad \forall\quad a\in [0, a_b]
\end{equation*}
for $a_b \in [0, A]$ and $a_b>0$. Thus, for every $T>A-a_0$ and  $y_0\in H,$ there exists a control $v\in L^2(\omega\times(0,a_0)\times(0,T) )$ such that $y$ solution to \eqref{eq1.5} satisfies
\begin{equation}\label{e3.2}
y(x,a,T)=0\qquad x\in\Omega,\,a\in [0,A],\quad\text{a.e.}.
\end{equation}
\end{proposition}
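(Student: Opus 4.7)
The plan is to prove null controllability via the classical Hilbert Uniqueness Method (HUM), reducing the problem to a final-state observability inequality for the backward adjoint system. By integration by parts on $Q$, the adjoint of \eqref{eq1.5} reads
\begin{equation*}
\begin{cases}
-\partial_t\varphi - \partial_a\varphi - \Delta\varphi + \mu(a)\varphi = \beta(x,a)\,\varphi(x,0,t) & \text{in } Q,\\
\partial_\nu\varphi = 0 & \text{on } \Sigma,\\
\varphi(x,A,t) = 0 & \text{in } Q_T,\\
\varphi(x,a,T) = \varphi_T(x,a) & \text{in } Q_A,
\end{cases}
\end{equation*}
where the nonlocal source term arises from dualizing the renewal condition at $a=0$. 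The target estimate is the final-state observability inequality
\begin{equation*}
\|\varphi(\cdot,\cdot,0)\|^2 \;\le\; C(T)\int_0^T\!\!\int_\omega\!\!\int_0^{a_0} |\varphi(x,a,t)|^2\,da\,dx\,dt,
\end{equation*}
with $C(T)>0$ independent of $\varphi_T\in H$. Once this inequality is secured, HUM produces a control $v\in L^2(\omega\times(0,a_0)\times(0,T))$ driving the solution of \eqref{eq1.5} to zero at time $T$.

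I would then reduce the adjoint to a parabolic equation along characteristics. The substitution $\varphi(x,a,t)=\pi(a)\,\psi(x,a,t)$ removes the mortality term, and integration along the characteristic lines $\{a-t=\mathrm{const}\}$ turns the transport part into a pure time shift. On each characteristic slice, one is left with a backward heat equation on $\Omega$ with homogeneous Neumann boundary conditions, coupled to the other slices only through the nonlocal source $\beta(x,a)\varphi(x,0,t)$. The time-horizon condition $T>A-a_0$ is exactly what guarantees that every backward characteristic issued from $\{t=T\}\times[0,A]$ reaches the observation cylinder $\omega\times[0,a_0]\times(0,T)$ before exiting at $a=A$ or $t=0$.

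For each such slice, I would invoke the classical Fursikov--Imanuvilov Carleman inequality for the heat equation with internal observation on $\omega$, obtaining a local observability estimate. To handle the nonlocal coupling $\beta(x,a)\varphi(x,0,t)$, the delay hypothesis $(\cN ull_\beta)$ plays the decisive role: since $\beta(x,a)\equiv 0$ on $[0,a_b]$ with $a_b>0$, the renewal source vanishes on an age strip of positive width. This permits an iterative treatment on finitely many time--age layers of width $a_b$, absorbing at each step the contribution of the renewal term through a Grönwall-type argument, and patching the local inequalities together to produce the global observability bound.

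The principal obstacle is precisely this nonlocal renewal coupling, which transmits information instantaneously across all ages and prevents a naive slice-by-slice reduction to the heat equation. The delay structure $(\cN ull_\beta)$ is the structural ingredient that makes the iteration close; without it one cannot expect to reach every $T>A-a_0$. Once observability is established, the explicit characterization of the control follows from the HUM minimization procedure, as in the approach of \cite{ref18}, while the final passage to a control $v\in L^2(\omega\times(0,a_0)\times(0,T))$ is immediate from the duality identity between the dual norm of the observation operator and the minimal-norm control.
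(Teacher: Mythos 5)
Your proposal takes a genuinely different route from the paper. The paper never passes to the adjoint system: it works directly with the range criterion $\operatorname{Ran}(\mathcal{T}_T)\subset\operatorname{Ran}(\Phi_T)$ of Definition \ref{d3.2} and verifies it constructively in Appendix \ref{annexe:A}, by introducing the admissible set $\mathcal{R}$, solving $(\Phi_T v)(a)=\bar y(x,a)$ explicitly case by case along the characteristics $t-a=\mathrm{const}$ (Proposition \ref{pr2.3}), and then checking $\operatorname{Ran}(\mathcal{T}_T)\subset\mathcal{R}$ via Hardy's inequality (Proposition \ref{pr2.5}). This produces the closed-form control \eqref{e3.4} and controlled state \eqref{e3.5}. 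That explicitness is not a by-product but the point of the exercise: the entire analysis of Theorem \ref{th1.4} (the singular limit $a_0\to0$ and the identification of the birth control) is performed on these formulas. A HUM control obtained by duality would give existence and an $L^2$ bound, but not the formulas, so even a completed version of your argument would not substitute for the paper's construction in the rest of the article. Your strategy is, however, legitimate in principle and is essentially the one the paper attributes to \cite{ref20} and \cite{ref24}.

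There are two concrete gaps. First, your geometric claim concerns the wrong family of characteristics. For final-state observability (the dual of null controllability) you must estimate $\varphi(\cdot,\cdot,0)$, and the relevant characteristics are those issued from $\{t=0\}\times[0,A]$, namely $s\mapsto(a+s,s)$; for $a>a_0$ the age along such a line only increases, so it never enters the observation band $[0,a_0]$. Hence no slice-by-slice reduction can bound $\varphi(x,a,0)$ for $a\ge a_0$ directly from the observation: since $T>A-a_0$ forces these characteristics to exit at $a=A$, where $\varphi$ vanishes, the quantity $\varphi(x,a,0)$ is reconstructed entirely through the renewal trace $\varphi(\cdot,0,\cdot)$, and it is that trace which must be estimated from $\omega\times(0,a_0)$. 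Your statement that every backward characteristic from $\{t=T\}\times[0,A]$ meets the observation cylinder pertains to $\varphi_T$, not to $\varphi(\cdot,\cdot,0)$, and does not close the argument.

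Second, in your approach the observability inequality is the entire content of the proposition, and it is only announced. The Carleman estimate on each slice, the iteration over age layers of width $a_b$, the Gr\"onwall absorption of the nonlocal source, and the uniformity of the constants when the effective observation window in the characteristic variable has length only of order $a_0$ are all left as a program; this is precisely where the restriction $T>A-a_0$ must be used quantitatively, and where the boundary layer near $t=T$ (newborns of age less than $a_0$ at the final time) has to be handled with care. As written, the proposal is a credible plan rather than a proof.
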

The null controllability result established in Proposition \ref{th1.3} is based on the following definition. \begin{definition}\cite[Definition 11.1.1, p. 364]{ref48}\label{d3.2}
    Let $T>0.$ The system \eqref{eq1.5} is null-controllable in time $T$ if
    \begin{align}\label{e3.3}
        \operatorname{Ran} (\mathcal{T}_T) \subset \operatorname{Ran}(\Phi_T).
    \end{align}
\end{definition}
Here, the notation \(\text{Ran}(\mathbb{T}_T)\) and \(\text{Ran}(\Phi_T)\) respectively denote the range of \(\mathbb{T}_T\) (the output map) and \(\Phi_T\) (the input map) where \(\mathbb{T}_T\) and \(\Phi_T\) are defined below.  Proposition \ref{th1.3} is proved by showing that condition \eqref{e3.3} holds for every $T>A-a_0$. Hence we obtain the following explicit control:

\begin{equation}\label{e3.4}
v(x,a,t)=\begin{cases}
0, & \text{if } t-a<0,\quad \forall\, x\in\Omega,\\[1mm]
-\dfrac{\displaystyle\int_{t-a}^{A}\beta(x,z)\pi(z)\,\dfrac{e^{(t-a)\Delta}y_0(x,z-(t-a))}{\pi(z-(t-a))}\,dz}{\displaystyle\int_{0}^{A-(t-a)}\frac{e^{-z\Delta}\,\mathds{1}_{\{\omega\times[0,a_0]\}}(x,z)}{\pi(z)}\,dz}, & \text{if } t-a\geq 0,\quad \forall\, x\in\Omega,
\end{cases}
\end{equation} 

and the corresponding controlled state

\begin{equation}\label{e3.5}
y(x,a,t)=\begin{cases}
\dfrac{\pi(a)}{\pi(a-t)}e^{t\Delta}y_0(x,a-t), & \text{if } t-a<0,\quad \forall\, x\in\Omega,\\[1mm]
\pi(a)e^{a\Delta}\eta(x,t-a)\left[1-\dfrac{\displaystyle\int_{0}^{a}\frac{e^{-s\Delta}\,\mathds{1}_{\{\omega\times[0,a_0]\}}(x,s)}{\pi(s)}\,ds}{\displaystyle\int_{0}^{A-(t-a)}\frac{e^{-z\Delta}\,\mathds{1}_{\{\omega\times[0,a_0]\}}(x,z)}{\pi(z)}\,dz}\right], & \text{if } t-a>0,\quad \forall\, x\in\Omega,
\end{cases}
\end{equation} 

which indeed satisfy condition \eqref{e3.2}.  The proof of Proposition \ref{th1.3} is provided in Appendix~\ref{annexe:A}. 

This strategy allows us to analyze the behavior of control \eqref{e3.4} and state \eqref{e3.5} as $a_0\to 0.$

Before establishing our first main result, that the system \eqref{eq1.1} is null-controllable, we introduce the following auxiliary model:
\begin{equation}\label{eq1.6} 
\left\lbrace 
\begin{array}{ll}
\partial_{t}y(x,a,t)+\partial_{a}y(x,a,t)-\triangle y(x,a,t)+\mu(a)y(x,a,t)=\mathds{1}_{\omega\times\lbrace 0\rbrace}(x,a)v_1(x,a,t) & \text{in}\quad Q\\
 \\\partial_{\nu}y(x,a,t)=0 & \text{on}\quad\Sigma\\
\\y(x,0,t)=\displaystyle\int_{0}^{A}\beta(x,a)y(x,a,t)da  & \text{in}\quad Q_{T}\\
 \\y(x,a,0)=y_{0}  & \text{in}\quad Q_{A}
\end{array}
\right.
\end{equation}
with $v_1$ the control.  It is evident, according to Lemma \ref{le1.1}, that the systems \eqref{eq1.5} and \eqref{eq1.6} are well-posed. 

 It is well known (see, e.g., \cite{ref24}) that the semigroup \(\mathcal{T}\) generated by \(\mathcal{A}_m\) is exponentially stable if its dominant eigenvalue \(\lambda_{1}^{0}\) (i.e., the principal eigenvalue of \(\mathcal{A}_m\)) satisfies \(\lambda_{1}^{0}<0\), where \(\lambda_{1}^{0}\) denotes the unique real solution to the characteristic equation
\[
\tilde{\beta}(\lambda)=\int_0^A \beta(x,a)e^{-\lambda a}\pi(a)\,da = 1,
\]
is strictly negative. This equation, known as Lotka's characteristic equation, defines the set of characteristic roots
\[
\{\lambda \in \mathbb{C} : \tilde{\beta}(\lambda)=1\}.
\]
\begin{remark}\label{re1.2}
Let \(R=\displaystyle\int_0^A \beta(x,a)\pi(a)\,da\) denote the reproduction number. We say that \(\bar{y}\) is a nonnegative steady state of \eqref{eq1.6} if 
 \[\bar{v}\ge0 \quad\text{a.e.}\; X\in\Omega,\; a\in(0,A),
 \]
\begin{equation}\label{s3.3}
\left\lbrace 
\begin{array}{ll}
\partial_{a}\bar{y}(x,a)-\triangle \bar{y}(x,a)+\mu(a) \bar{y}(x,a)=\mathds{1}_{\omega\times \lbrace0\rbrace}(x,a)\bar{v}(x,a) & \text{ in }  Q_1,\\
 \\\partial_{\nu}\bar{y}(x,a)=0 &\text{ on }  \Sigma_1,\\
 \\\bar{y}(x,0)=\displaystyle\int\limits_{0}^{A}\beta(x,a)\bar{y}(x,a)da &\text{ in }  \Omega
\end{array}
\right.
\end{equation}
It is well known that (see, for instance \cite[Theorem 3.1]{ref23}) :
\begin{itemize}
    \item[\(\bullet\)] If \(R<1\), then there exists a unique positive solution of system \eqref{s3.3}.
    \item[\(\bullet\)] If \(R=1\) and \(\bar{v}\equiv0\), then there exists an infinite family of solutions of \eqref{s3.3} given by \(\bar{y}(x,a)=\alpha\pi(a)\), with \(\alpha\in [0,\infty)\).
    \item[\(\bullet\)] If \(R>1\), then no positive solution exists for \eqref{s3.3}.
\end{itemize}
\end{remark}
For that purpose, we may assume without loss of generality that the so-called reproductive number satisfies \( R < 1 \) (see \cite[Remark 2.6 and Theorem 2.2]{ref24}),  which ensures that the semigroup \(\mathcal{T}\) is exponentially stable and hence bounded.

The work in \cite{ref18} highlighted the dependence of the control on the age parameter $a_0$. It paves the way to extend, to the diffusive case, the singular-perturbation result characterizing the behavior of the control \eqref{e3.4} and of the state \eqref{e3.5} as $a_0 \to 0$ (direct birth control). We are currently pursuing this analysis in order to track the control dynamics as $a_0$ tends to zero.  We set $a_0 = \varepsilon$ and, for system \eqref{eq1.5}, define the family of null controls $\bigl(v^{\varepsilon}\bigr)_{\varepsilon\in[0,A]}$ given in \eqref{e3.4}, along with their corresponding controlled states $\bigl(y^{\varepsilon}\bigr)_{\varepsilon\in[0,A]}$ defined in \eqref{e3.5}.
\begin{theorem}\label{th1.4}
Consider the assumptions of Proposition \ref{th1.3}, the distributed control family  $(v^{\epsilon})_{\epsilon\in [0,A]}\subset L^2 (\omega\times[0,A]\times[0,T])$ of \eqref{eq1.5}   where $y^{\epsilon}$ is its controlled state. Then,  
\begin{itemize}
\item[1-] $\lim_{\epsilon\longrightarrow 0}\mathds{1}_{\lbrace\omega\times[0,\epsilon]\rbrace} v^{\epsilon}=\mathds{1}_{\omega\times\lbrace 0\rbrace}v_1$ weakly   in $L^2 (\omega\times(0,T))$ with $v_1$ the null-control of \eqref{eq1.6}.
\item[2-] $\lim_{\epsilon\longrightarrow 0} y^{\epsilon}=y$  strongly in $H,\quad \forall \;t\;\in (0,T)$  where $y$ is the  controlled state  of \eqref{eq1.6}.
\item[3-] For every $y_0\in H$ and every time horizon $T> A$, there exists $v_1 \in L^2(\omega\times[0,T])$ such that the solution $y$ to \eqref{eq1.6} and \eqref{eq1.1} satisfy 
\begin{align}\label{e3.8}
     y(x,a,T)=0\text{ a.e. }x\in\Omega\quad a\in(0,A).
\end{align} 
Moreover, there exists a constants $K(A)>0$ such that the control function  verify the following inequality :
\begin{align}
    \|v_1\|_{L^2(\omega\times(0,T))} \le K(A)\,\|y_{0}\|.
\end{align}
\end{itemize}
\end{theorem}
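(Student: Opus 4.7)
The plan is a singular-perturbation analysis of the explicit null-control \eqref{e3.4} and the corresponding state \eqref{e3.5} provided by Proposition \ref{th1.3}, sending the age-window $a_0 = \varepsilon$ to zero. The heart of the argument is the behavior of the denominator of \eqref{e3.4}: writing $\tau = t - a$,
\[
D^\varepsilon(x,\tau) := \int_0^{A-\tau} \frac{e^{-z\Delta}\mathds{1}_{\{\omega\times[0,\varepsilon]\}}(x,z)}{\pi(z)}\,dz = \int_0^{\min(\varepsilon,\,A-\tau)} \frac{e^{-z\Delta}\mathds{1}_\omega(x)}{\pi(z)}\,dz,
\]
which, using $\pi(0)=1$ and strong continuity of $z\mapsto e^{-z\Delta}$ at $0$, satisfies $\varepsilon^{-1}D^\varepsilon(\cdot,\tau)\to\mathds{1}_\omega$ strongly in $L^2(\omega)$ for each fixed $\tau$. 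Since the numerator $N(x,\tau)$ of \eqref{e3.4} is $\varepsilon$-independent, we conclude that $\varepsilon\,v^\varepsilon(x,a,t)\to v_1(x,t-a)$ pointwise on $\{t\ge a\}$, with natural candidate limit $v_1(x,\tau):=-N(x,\tau)$ (for $x\in\omega$), which belongs to $L^2(\omega\times(0,T))$.

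For item $1$, we test $\mathds{1}_{\omega\times[0,\varepsilon]}v^\varepsilon$ against a smooth function $\phi$ and rescale $a=\varepsilon\sigma$:
\[
\int_0^T\!\!\int_\omega\!\!\int_0^\varepsilon v^\varepsilon(x,a,t)\,\phi(x,a,t)\,da\,dx\,dt = \int_0^T\!\!\int_\omega\!\!\int_0^1 \bigl(\varepsilon\,v^\varepsilon(x,\varepsilon\sigma,t)\bigr)\,\phi(x,\varepsilon\sigma,t)\,d\sigma\,dx\,dt,
\]
and dominated convergence, based on a uniform $L^\infty_\tau L^2_x$ bound on $\varepsilon\,v^\varepsilon$ from $L^2$-contractivity of $e^{\tau\Delta}$ and the lower bound $\varepsilon^{-1}D^\varepsilon\ge c\mathds{1}_\omega$, yields the stated weak convergence, the limit being interpreted as the distributional $\delta_0$-type source appearing in \eqref{eq1.6}. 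For item $2$, an analogous asymptotic applied directly to \eqref{e3.5}, both to the homogeneous part $e^{t\Delta}y_0$ and to the bracketed factor built from the ratio of two $D^\varepsilon$-type integrals, combined with dominated convergence on $\Omega\times(0,A)$, gives $y^\varepsilon(t)\to y(t)$ strongly in $H$ for every $t\in(0,T)$, where $y$ is the mild solution of \eqref{eq1.6} associated with $v_1$ via Lemma \ref{l2.5}.

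For item $3$, since $y^\varepsilon(\cdot,\cdot,T)\equiv 0$ by Proposition \ref{th1.3}, the strong $H$-limit satisfies $y(\cdot,\cdot,T)=0$, which is \eqref{e3.8} for \eqref{eq1.6}. The equivalence between \eqref{eq1.6} and \eqref{eq1.1} developed in Section \ref{s2}, concretely the identification $\mathcal{A}_m=\mathcal{A}_{-1}+\mathcal{B}\delta_0$ of Lemma \ref{l2.4} together with Lemma \ref{l2.5}, transfers this null-controllability to \eqref{eq1.1} with boundary control $v=v_1$. The uniform bound $\|v_1\|_{L^2(\omega\times(0,T))}\le K(A)\|y_0\|_H$ follows from the explicit formula for $v_1$: contractivity of $e^{\tau\Delta}$ on $L^2(\Omega)$, essential boundedness of $\beta$ on $\Omega\times[0,A]$, and the survival-function estimates on $\pi$ yield a constant $K(A)$ depending only on $A$, $\beta$ and $\mu$.

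The main obstacle is the rigorous weak passage to the limit in item $1$: the unscaled object $\mathds{1}_{\omega\times[0,\varepsilon]}v^\varepsilon$ blows up pointwise like $\varepsilon^{-1}$ while concentrating at $a=0$ as a Dirac mass, so identifying its weak limit with the distributional source of \eqref{eq1.6} requires careful bookkeeping in the dual pairing and the correct interpretation of $\mathds{1}_{\omega\times\{0\}}v_1$ in the sense of the boundary–distributed equivalence of Section \ref{s2}. A secondary delicate point is the uniform bound on $v_1$ in terms of $\|y_0\|_H$ alone; this requires exploiting assumption $(\cN ull_{\beta})$, which forces $\beta(x,a)=0$ on $(0,a_b)$, to ensure that, for $T>A$, the numerator $N$ remains controlled uniformly on the characteristic triangle $\{t\ge a\}\subset(0,A)\times(0,T)$.
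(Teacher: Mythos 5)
Your proposal follows essentially the same route as the paper: a singular-perturbation analysis of the explicit control \eqref{e3.4} and state \eqref{e3.5} as $a_0=\varepsilon\to0$, with the ratio of the $O(\varepsilon)$ numerator and denominator integrals identified in the limit (the paper's $g^{\varepsilon}$ computation is your $a=\varepsilon\sigma$ rescaling), dominated convergence for both the weak limit of the control and the strong $H$-limit of the state, passage to the limit in $y^{\varepsilon}(T)=0$ to get \eqref{e3.8}, and the $K(A)\|y_0\|$ bound via contractivity of the heat semigroup, $\pi(a)/\pi(a-t)\le1$, $\|\beta\|_{L^\infty}$, Cauchy--Schwarz and Fubini. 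The only cosmetic difference is that the paper records the limiting control and state by explicit formulas rather than through the abstract mild-solution identification of Lemma \ref{l2.5}, and its $L^2$ bound on $v_1$ does not actually invoke $(\mathcal{N}ull_{\beta})$ beyond what is already needed for the controllability construction itself.
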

In fact, by rewriting systems \eqref{eq1.1} and \eqref{eq1.6} respectively from left to right in the form 
\noindent
\begin{minipage}{0.45\linewidth}
\begin{equation*}
\left\lbrace 
\begin{array}{ll}
\partial_{t}y(x,a,t)=\mathcal{A}_my(x,a,t) & \text{ in }  Q,\\
 \\ 
\delta_0(a)y(x,a,t)=\mathds{1}_{\omega}(x)v(x,t) &\text{ in }  Q_T,\\
\\y(x,a,0)=y_0 &\text{ in }  Q_A.
\end{array}
\right.
\end{equation*}
\end{minipage}
\hfill
\begin{minipage}{0.45\linewidth}
\begin{equation*}
\left\lbrace 
\begin{array}{ll}
\partial_{t}y(x,a,t)=\mathcal{A}_{m}y(x,a,t)+Bv(x,a,t) & \text{ in }  Q,\\
 \\ 
y(x,a,0)=y_0 &\text{ in }  Q_A.
\end{array}
\right.
\end{equation*}
\end{minipage}\\
\\with $B=\mathds{1}_{\lbrace\omega\times0\rbrace}(x,a),$ the null-controllability for system in the left can be readily derived from that for system in the right, and vice versa.  Thus, to introduce optimal control, we rewrite system \eqref{eq1.1} in the form of the system on the right, following the formulation in Section \ref{s2} such that 
\begin{align}
    \mathcal{B}\mathds{1}_{\omega}(x)v=Bv,\quad (v\in L^2((0,T); H)).
\end{align} 
Since the control operator $\mathcal{B}$ is admissible for $\mathcal{T}$, $B$ is admissible as well (refer to the example presented in \cite[Lemma 2.7]{ref79}).
 The following concerns the  optimal control problem, in order to reach an ideal population at a lower cost quantifying the proximity  of  optimal dynamic solutions in the steady state :  Turnpike Property.  
 We consider control systems in which the initial state resides in \( H \) and the control input \( v \) belongs to \( L^2(\omega \times [0, T]) \). Given \( T >A> 0 \), our focus is on system \(\eqref{eq1.1}\) as well as on its steady-state formulation, in which the mapping \( v \mapsto y \)  is linear (and therefore affine).  For any \( y_0 \in H \), we examine the behavior of global minimizers \(v_T \in L^2(\omega \times [0, T])\) for non-negative functionals of the form
\begin{align}\label{eq1.2}
   \min\limits_{v\in L^2(\omega\times[0,T])} J_1(v)=\dfrac{N}{2}\int\limits_{0}^{T}\Vert y(t)-y_d\Vert^2 dt+\dfrac{1}{2}\int\limits_{0}^{T}\Vert v(t)\Vert^2_{L^2(\omega)}dt+\dfrac{1}{2}\langle y(T),p(T)\rangle
\end{align}
 $N>0$ is given, considering suitably chosen time horizons \(T\) and the corresponding solutions $y(t)=y(x,a,t)$ to \eqref{eq1.6}. The last term represents an additional final pay-off (representing a final cost added to minimize the population density at the final time) associated with the adjoint state \(p \), which we will define later.  \( p(T) \) appears in the transversality condition of the Pontryagin maximum principle and reflects the sensitivity of the terminal cost with respect to \( y(T) \). This terminal term acts as a penalty, ensuring that, at the end of the interval, the trajectory remains close to the optimal solution (turnpike).  $y_d\in H$ is the prescribed running target, chosen as a steady-state solution of the following system in the absence of control (\( v \equiv 0 \))
\begin{equation}\label{eq1.3}	 
-\mathcal{A}_my(x,a)=Bv(x,a)  \qquad\;\text{in}\;Q_1.
\end{equation}
With the target incorporated into the definition of \(J_1\), which regulates the state over the entire interval \([0,T]\), the optimal control-state pair we designate, \((y_T, v_T)\), should remain close to the optimal steady control-state pair \((\bar{y}, \bar{v})\) the solution of the problem
\begin{equation}\label{eq1.4}
 \min\limits_{v\in L^2(\omega)}J_2(v)=\dfrac{N}{2}\Vert y-y_d\Vert^2+\dfrac{1}{2}\Vert v\Vert^2_{L^2(\omega)}.
 \end{equation}
Over long time horizons, this phenomenon is known as the turnpike property.
\begin{theorem}\label{th1.5}[Exponential Turnpike Property]
Suppose $y_0\in H,\quad y_d\in H$ are fixed and  let us the system \eqref{eq1.1}   null-controllable in some time $T> A$ in the sense of Theorem \ref{th1.4}. There exists a couple of positive constants  $(C,\nu),$  independent of $y_0$ and $y_d$, such that for any positive $T$ large enough, the unique solution  $(y_T,v_T)$  to \eqref{eq1.2} subject to \eqref{eq1.1}   satisfies 
\begin{align}\label{ee3.13}
\Vert y_T(t)-\bar{y}\Vert+\Vert v_T(t)-\bar{v}\Vert_{L^2(\omega)}\leq C\left(\Vert y_0 -\bar{y}\Vert e^{-\nu t}+\Vert p_T(T)-\bar{p}\Vert e^{-\nu (T-t)}\right)
\end{align}
for almost every  $t\in [0,T],$ where $(\Bar{y},\Bar{v})$ denotes the unique solution to  \eqref{eq1.4} subject to \eqref{eq1.3} and $(p_T,\Bar{p})$ are optimal dynamic and steady adjoint state.
\end{theorem}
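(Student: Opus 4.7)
The plan is to reduce the problem to analysing a homogeneous optimality system and then decoupling it with a Riccati-based dichotomy transformation, as is customary in the Porretta--Zuazua / Tr\'elat--Zhang--Zuazua approach to turnpike for LQ problems. First I would derive the first-order optimality conditions for the dynamic problem \eqref{eq1.2}: a Pontryagin principle on the admissible pair $(y_T,v_T)$ gives the coupled system
\begin{equation*}
\partial_t y_T = \mathcal{A}_m y_T - BB^{*} p_T,
\quad -\partial_t p_T = \mathcal{A}_m^{*} p_T + N(y_T - y_d),
\quad v_T = -B^{*} p_T,
\end{equation*}
with $y_T(0)=y_0$ and the transversality condition inherited from the terminal pay-off. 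The same computation on the steady problem \eqref{eq1.4}--\eqref{eq1.3} yields $(\bar y,\bar v,\bar p)$ satisfying $\mathcal{A}_m\bar y = -B\bar v$, $\mathcal{A}_m^{*}\bar p = -N(\bar y-y_d)$, and $\bar v = -B^{*}\bar p$. Setting $\tilde y = y_T-\bar y$, $\tilde p = p_T-\bar p$, the affine terms cancel and one is left with the \emph{homogeneous} Hamiltonian system
\begin{equation*}
\partial_t \tilde y = \mathcal{A}_m \tilde y - BB^{*}\tilde p, \qquad
-\partial_t \tilde p = \mathcal{A}_m^{*}\tilde p + N\tilde y,
\end{equation*}
with $\tilde y(0)=y_0-\bar y$ and $\tilde p(T)=p_T(T)-\bar p$. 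Proving \eqref{ee3.13} is then equivalent to proving exponential decay of $(\tilde y,\tilde p)$ from both endpoints.

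Next I would invoke the algebraic Riccati and Lyapunov machinery on $H$. The null-controllability result of Theorem \ref{th1.4} (combined with admissibility of $B$ established in Section \ref{s2}) furnishes exact null-controllability in time $T>A$, which in particular yields the stabilizability of $(\mathcal{A}_m,B)$; the coercivity $N>0$ of the observation in $J_1$ provides detectability. Under these two conditions, the algebraic Riccati equation
\begin{equation*}
E\mathcal{A}_m + \mathcal{A}_m^{*} E - E BB^{*} E + N I = 0
\end{equation*}
admits a unique nonnegative self-adjoint bounded solution $E\in\mathcal L(H)$, and the closed-loop operator $\mathcal{A}_E := \mathcal{A}_m - BB^{*}E$ generates a $C_0$-semigroup on $H$ that is exponentially stable by Phillips' theorem (applied to the Lyapunov functional $\langle E\cdot,\cdot\rangle$). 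This is precisely the role of the Riccati/Lyapunov pair announced in the abstract.

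With $E$ in hand, I carry out the dichotomy transformation $\tilde p = E\tilde y + \varphi$, which diagonalises the Hamiltonian block. A direct substitution shows that $\varphi$ satisfies the backward equation $-\partial_t\varphi = \mathcal{A}_E^{*}\varphi$ with $\varphi(T)=\tilde p(T)-E\tilde y(T)$, so that $\varphi(t) = e^{(T-t)\mathcal{A}_E^{*}}\varphi(T)$; by Phillips one gets
\begin{equation*}
\|\varphi(t)\| \le C\,e^{-\nu(T-t)}\,\bigl(\|p_T(T)-\bar p\| + \|E\|\,\|\tilde y(T)\|\bigr).
\end{equation*}
The forward equation becomes $\partial_t\tilde y = \mathcal{A}_E\tilde y - BB^{*}\varphi$, and the variation of constants formula together with exponential stability of $e^{t\mathcal{A}_E}$ gives
\begin{equation*}
\|\tilde y(t)\| \le C\,e^{-\nu t}\|y_0-\bar y\| + C\int_0^t e^{-\nu(t-s)}\|\varphi(s)\|\,ds,
\end{equation*}
and the time-convolution of two exponentials $e^{-\nu t}$ and $e^{-\nu(T-t)}$ yields the required $e^{-\nu t}+e^{-\nu(T-t)}$ bound after adjusting the rate. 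The bound on $\tilde p$ follows from $\tilde p=E\tilde y+\varphi$, and the bound on $v_T-\bar v = -B^{*}(\tilde p)$ follows from admissibility of $B^{*}$.

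The main obstacle is the step involving the algebraic Riccati equation: $B$ is an unbounded operator (it only lies in $\mathcal L(L^2(\omega),H_{-1})$), so well-posedness of the Riccati equation and the identification $BB^{*}E\in\mathcal L(H,H)$ require using the admissibility established in Lemma \ref{l2.5} together with a regularity argument showing that $E$ maps $H$ into $D(B^{*})$. A second delicate point is justifying that null-controllability in time $T>A$ is equivalent to exponential stabilizability in the present infinite-dimensional non-normal setting; here one uses the semigroup property of $\mathcal{T}$ together with Datko-type arguments to lift null-controllability of the pair to exponential decay of the feedback semigroup, which is the content of Phillips' theorem as invoked in the abstract. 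Once these two analytical points are secured, the dichotomy estimate \eqref{ee3.13} follows essentially algebraically.
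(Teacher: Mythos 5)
Your proposal is correct and follows the same overall strategy as the paper: Pontryagin optimality systems for the dynamic and static problems, deviation variables $\tilde y=y_T-\bar y$, $\tilde p=p_T-\bar p$ yielding the homogeneous Hamiltonian system, the algebraic Riccati equation whose solution $\hat E$ exists and stabilizes thanks to null-controllability combined with Phillips' theorem, and finally a dichotomy argument. The one genuine difference is the decoupling step: you use the classical Porretta--Zuazua substitution $\tilde p=E\tilde y+\varphi$, which only \emph{triangularizes} the Hamiltonian (the $\varphi$-equation is autonomous and the $\tilde y$-equation carries $\varphi$ as a source, handled by convolving two exponentials), whereas the paper constructs the full block operator $\Lambda=\begin{bmatrix} I & S\\ \hat E & \hat E S+I\end{bmatrix}$, with $S$ solving an auxiliary Lyapunov equation, so that $\Lambda\,Ham\,\Lambda^{-1}$ is block-diagonal and the two components decay independently from $t=0$ and $t=T$. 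Your route is slightly more elementary (no Lyapunov equation, no inversion of $\Lambda$), at the price of the convolution estimate; the paper's route gives the two exponential profiles immediately. One point you should make explicit to close the argument in the stated form: your bound on $\varphi(t)$ contains the term $\|E\|\,\|\tilde y(T)\|$, and to absorb it into $C\bigl(\|y_0-\bar y\|e^{-\nu t}+\|p_T(T)-\bar p\|e^{-\nu(T-t)}\bigr)$ you need the uniform-in-$T$ bounds $\|y_T(T)\|\le C$ and $\|p_T(0)\|\le C$, which the paper isolates as Lemma \ref{le3.12} (via the a priori estimate \eqref{eq3.8} and the observability inequality for the adjoint); without this the terminal data of the decoupled system are not controlled independently of the horizon.
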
 
\begin{remark}
 Inequality \eqref{ee3.13} means that, for sufficiently large $T$, the trajectories $y_T(t)$ and $v_T(t)$ remain exponentially close to $\bar y$ and $\bar v$ over most of the interval $[0,T]$, except in thin boundary‐layer regions near $t=0$ and $t=T$.
\end{remark}

Now, let us mention some related works from the literature.

On the one hand, previous work  \cite{ref3} has demonstrated that system \eqref{eq1.5} can be controlled to any quasi-steady state via distributed control applied over a small age interval near zero, while ensuring the positivity of solutions.  The null-controllability of system \eqref{eq1.5} has been investigated in various contexts : in \cite{ref19} for the spatially independent case, subsequently improved in \cite{ref18} by constructing a control as a function of the initial condition with the use of characteristics, in \cite{ref20} by combining final-state observability estimates with the use of characteristics and the associated semigroup, in \cite{ref25} where the birth and mortality rates are modeled as nonlinear functions of the population size under the assumption that young individuals cannot reproduce before a certain age \(a_0>0\). Moreover, the exact controllability of system \eqref{eq1.5} under a positivity constraint was established in \cite{ref3} when the fertility function is independent of the spatial variable, adhering to the same reproductive conditions ($\mathcal{N}ull_{\beta}$) as in \cite{ref18}. A similar model, in which the control acts on a subspace, is considered in \cite{ref14, ref21, ref23, ref26}, while the null-controllability of the model with control acting over the entire age range was proven in \cite{ref24} by employing the Lebeau-Robbiano strategy originally developed for the heat equation. \cite{ref71} have analysed an approximate controllability result for a nonlinear
population dynamics model from the birth control, \cite{ref27} improve the result obtained in \cite{ref71}, by analyzing the birth lower frequency null controllability and the birth long time null controllability with a control domain  only measurable.

On the other hand, the optimal control of age-dependent population dynamics with diffusion is a problem of considerable importance that has been addressed by numerous researchers. Optimal control in this context has been examined in, for example, \cite{ref42, ref49, ref57}. The turnpike property, a powerful analytical tool for dynamic models emerged in economics through the seminal contributions of Paul Samuelson, Robert Solow, and Robert Dorfman in the \textsc{\romannumeral 20}\textsuperscript{th} century \cite{ref11}, which also gave rise to the term “turnpike.” Its application to discrete optimal control problems was first demonstrated in $1963$ by McKenzie \cite{ref39}. Several variants of the turnpike property have been proposed, with some being more stringent than others (see, for instance, \cite{ref54, ref56}). Exponential turnpike properties have been established in \cite{ref53, ref54, ref55} for the optimal triple derived via Pontryagin’s maximum principle, ensuring that the extremal solution (comprising the state, adjoint, and control) remains exponentially close to an optimal solution of the corresponding static control problem except at the beginning and end of the time interval when \(T\) is sufficiently large. Furthermore, researchers such as \cite{ref5, ref32, ref33, ref36, ref38, ref44, ref52} have emphasized the energy requirements necessary to remain on the turnpike as a function of the initial data.

In this paper, we improve the result obtained in \cite{ref27} by considering an initial condition in $H$ and by applying the control on a non-empty open subset \(\omega \subset \Omega\). Building upon the approach developed in \cite{ref18}, we demonstrate that there exists a control that is sufficient to drive the entire population to zero without imposing any additional restrictions on the initial condition, in contrast to the strategy employed in \cite{ref27}. Indeed, since model \eqref{eq1.1} is a boundary control system, we proposed to study the null controllability of model \eqref{eq1.5} (a result already established in \cite{ref3}) with the sole purpose of analyzing the convergence, or more generally the behavior, of the controlled state as well as the null control (i.e., the control that drives the state to zero) when the upper bound \(a_0\) tends to zero. Consequently, the resulting control brings both system \eqref{eq1.1} and system \eqref{eq1.6} to rest for all times \(T > A\).

The main contributions of our paper are as follows:

\begin{enumerate}
\item[•]  We have shown that the boundary control model \(\eqref{eq1.1}\) can be reformulated as a control model with a source term, and that the control operator in question is admissible.

\item[•]  We adopt an alternative analytical approach to examine model \(\eqref{eq1.5}\) from \cite{ref3} by incorporating spatial heterogeneity in the fertility rate. In doing so, we have elucidated the dependence of both the control and the state on \(a_0\) for every initial condition. While alternative methodologies have been explored previously, our approach synthesizes techniques from \cite{ref18} (pertaining to the null-controllability result) with the foundational model \eqref{eq1.5}.
 
\item[•] We analyzed the behavior of the controlled state and the null control of system \eqref{eq1.5} as \(a_0\) tends to zero. A similar case was previously studied in \cite{ref18} for a homogeneous population. We extend this analysis by incorporating diffusion and show, using Lebesgue’s dominated convergence theorem, that the null-control converges to a limiting control that drives systems \eqref{eq1.6} and \eqref{eq1.1} to zero. In other words, the null-control of the  system \eqref{eq1.6} is equivalent to that of the boundary system \eqref{eq1.1}.

\item[•] For the first time in the context of population dynamics, we highlight the turnpike property within birth control models. Although the analytical tools employed are well-established, this represents the inaugural application of the turnpike property to population dynamics, specifically in the domain of birth control for a sufficiently long time \(T\), particularly satisfying \(T > A\). In addressing the issue of stabilizability, we employ Phillips' theorem along with our null-controllability results to establish exponential decay towards the equilibrium point. In \cite{ref37}, stability and detectability were the main tools used to study the turnpike property. In our case, in addition to stability, we have system observability, a notion stronger than detectability.
\item[•]  We have established a weak version of the exponential turnpike property, namely the integral turnpike property for \(T \leq A\), by leveraging the system's stability and observability.
\end{enumerate}

The remainder of the paper is organized as follows:
\begin{itemize}
    \item[•] In \textbf{Section \ref{s4}}, we state our null‐controllability results (Theorems \ref{th1.4}). In this section, we show that, using the same control $v$ defined in \eqref{eq2.81}, the solutions of systems \eqref{eq1.1} and \eqref{eq1.6} are driven to rest.
    \item[•] In {\bf Section \ref{s5}}, we undertake a qualitative analysis of the dynamic systems. We establish the turnpike property (Theorem \ref{th1.5}) by comparing the behavior of the optimal solution to the dynamic control problem with that of the equilibrium solution to the stationary control problem.
     \item[•] In {\bf Section \ref{s6}}, we examine a weakened notion of the exponential turnpike property. Since the system is not controllable for \(T \le A\) (see, Proposition \ref{pr2.2}), we establish the integral turnpike property (which directly implies the measure turnpike property) by relying on the system’s stability and observability. We also highlight the case of an exponential turnpike property when the reproduction number \(R < 1\).
    \item[•] In {\bf Section \ref{s7}}, we perform numerical simulations of model \eqref{eq1.1} by projecting the state onto a basis generated by the eigenfunctions of \(L^2(\Omega)\), in order to characterize the turnpike phenomenon. 
    \item[•] \textbf{Section \ref{s8}} is devoted to a discussion and to exploring new perspectives. In light of the theoretical and numerical results obtained, we propose directions for future work.
\end{itemize}
\section{Proof of Null-controllability by birth control}\label{s4}
In order to establish Proposition \ref{th1.3}, we use the characteristic lines to demonstrate that \(\mathrm{Ran}(\mathbb{T}_T) \subset \mathrm{Ran}(\Phi_T)\) at the appropriate time \(T\), where \(\mathbb{T}_T\) and \(\Phi_T\) are defined below. Theorem \ref{th1.4} is based on a similar strategy and relies on the analysis of the behavior of the controlled state of system \eqref{eq1.5}, as well as its control, as \(a_0\) approaches zero. 

We investigate the asymptotic behavior of both the control and the state as \(a_0 = \epsilon \rightarrow 0\). According to Theorem \ref{th1.3}, for every \(y_0 \in H\) there exists a control \(v^{\epsilon} \in L^2(0,T;H)\) such that the solution of \eqref{eq1.5} satisfies \eqref{e3.2}. Choosing \(T = A\) (so that \(T > A-a_0\)), the corresponding null-control \(v^{\epsilon}\) is given by \eqref{e3.4}, and its associated controlled state \(y^{\epsilon}\) is expressed by \eqref{e3.5}.
Before proceeding, we introduce the following definition.
\begin{definition}\cite{ref3}
Let $w\in L^2(Q_1)$ be a distributed control such that  $\bar{y}\in  L^2(Q_1)$ is a positive function solution of the system
\begin{equation}\label{eq2.1} 
\left\lbrace 
\begin{array}{ll}
 \partial_{a}\bar{y}(x,a)-\triangle \bar{y}(x,a)+\mu(a) \bar{y}(x,a)=\mathds{1}_{\lbrace\omega\times[0,a_0]\rbrace}(x,a) w(x,a) &\text{in }\quad Q_1,\\
 \\\partial_{\nu}\bar{y}(x,a)=0 &\text{in }\quad\Sigma_1,\\
 \\\bar{y}(x,0)=\displaystyle\int_{0}^{A}\beta(x,a)y(x,a)da &\text{in }\quad\Omega,\\
\end{array}
\right.
\end{equation}
 is said to be a positive steady state of \eqref{eq1.5} when $w\geq 0$ a.e. on $x\in \Omega,\; a\in (0,A).$
\end{definition}
The so-called mild solution of \eqref{eq1.5}, derived via Duhamel's formula, is given by
\begin{equation}\label{eq2.2}
 y(.,.,t)=\mathcal{T}_t y_0 +\Phi_t v\qquad (x\in \Omega,\, t\geq 0,\quad v\in L^2([0,\infty);H)),
\end{equation}
where
\begin{equation}\label{eq2.3}
\Phi_tv =\int_0^t \mathcal{T}_{t-\sigma}\mathds{1}_{\lbrace\omega\times[0,a_0]\rbrace}(x,a)v(\sigma)d\sigma\quad v\in L^2([0,\infty);H),
\end{equation} 
 and 
\begin{equation}\label{eq2.4}
(\mathcal{T}_t y_0)(a)=\left\lbrace
\begin{array}{ll}
\frac{\pi(a)}{\pi(a-t)}e^{t\triangle}y_0(x,a-t)\quad t\leq a, \quad \forall x\in \Omega,\\
\\\pi(a)e^{a\triangle}b(x,t-a)\quad t> a, \quad \forall x\in \Omega.
\end{array}
\right.
\end{equation}   
Moreover,
$$b(x,t)=(\mathcal{T}_t y_0)(0)=\int_0^A\beta (x,a) (\mathcal{T}_t y_0)(a)da \qquad \forall y_0\in H.$$
By employing the method of characteristics, we obtain
\begin{equation}\label{eq2.5}
y(x,a,t)=\left\lbrace
\begin{array}{ll}
\dfrac{\pi(a)}{\pi(a-t)}e^{t\triangle}y_{0}(x,a-t)+\displaystyle\int_{a-t}^{a}\frac{\pi(a)}{\pi(s)}e^{(a-s)\triangle}\mathds{1}_{\lbrace\omega\times[0,a_0]\rbrace}(x,s)v(x,s,s-a+t)ds\quad t\leq a,\\
\\\pi(a)e^{a\triangle}b(x,t-a)+\displaystyle\int_{0}^{a}\frac{\pi(a)}{\pi(s)}e^{(a-s)\triangle}\mathds{1}_{\lbrace\omega\times[0,a_0]\rbrace}(x,s) v(x,s,s-a+t)ds\quad t> a,
\end{array}
\right.
\end{equation}
where
\begin{align*}
    b(x,t)=\displaystyle\int_{0}^{A}\beta(x,a)  y(x,a,t)da.
\end{align*}
To reformulate the control strategy, we introduce a new control function of the form $v(x,a,t)=u(x,t-a)$ defined over $\Omega\times [-A,T].$ Consequently, the system takes the form
\begin{equation}\label{eq2.6}
y(x,a,t)=\left\lbrace
\begin{array}{ll}
\frac{\pi(a)}{\pi(a-t)}e^{t\triangle}y_{0}(x,a-t)+u(x,t-a)\displaystyle\int_{a-t}^{a}\frac{\pi(a)}{\pi(s)}e^{(a-s)\triangle}\mathds{1}_{\lbrace\omega\times[0,a_0]\rbrace}(x,s)ds\quad t\leq a,\quad \forall x\in \Omega\\
\\\pi(a)e^{a\triangle}b(x,t-a)+u(x,t-a)\displaystyle\int_{0}^{a}\frac{\pi(a)}{\pi(s)}e^{(a-s)\triangle}\mathds{1}_{\lbrace\omega\times[0,a_0]\rbrace}(x,s)ds\quad t> a, \quad \forall x\in \Omega.
\end{array}
\right.
\end{equation}
Using characteristic methods, the state of system \eqref{eq1.6} is then given by
\begin{equation}\label{ee4.7}
y(x,a,t)=\left\lbrace
\begin{array}{ll}
\dfrac{\pi(a)}{\pi(a-t)}e^{t\triangle}y_{0}(x,a-t)+\displaystyle\int_{a-t}^{a}\frac{\pi(a)}{\pi(s)}e^{(a-s)\triangle}\mathds{1}_{\omega\times \lbrace 0\rbrace}(x,s)v_1(x,s,s-a+t)ds\quad t\leq a,\\
\\\pi(a)e^{a\triangle}b(x,t-a)+\displaystyle\int_{0}^{a}\frac{\pi(a)}{\pi(s)}e^{(a-s)\triangle}\mathds{1}_{\omega\times \lbrace 0\rbrace}(x,s) v_1(x,s,s-a+t)ds\quad t> a.
\end{array}
\right.
\end{equation}
This solution, which naturally leaves a trace at $t-a$, motivates us to investigate a shaping control strategy $u(x,t-a)$  defined over 
 $\Omega\times [-A,T].$ 
\begin{proof}[of Theorem \ref{th1.4}] 
The proof is carried out in three steps.

{\bf Step 1}: Define the functions \( f, k^{\epsilon} \) from \eqref{e3.4} as follows:
\begin{align}\label{eq2.57}
f(x,a,t)=-\int_{t-a}^A \beta(x,z)\pi(z) \dfrac{e^{(t-a)\triangle}y_{0}(x,z-(t-a))}{\pi(z-(t-a))}dz, \quad k^{\epsilon}(x,a,t)=\int_{0}^{A-(t-a)}\frac{e^{-z\triangle}\mathds{1}_{\lbrace\omega\times[0,\epsilon]\rbrace}(x,z)}{\pi(z)}dz.
\end{align} 
Thus, we obtain:
\begin{align*}
v^{\epsilon}(x,a,t)=\dfrac{f(x,a,t)}{k^{\epsilon}(x,a,t)} \qquad \text{for} \quad t-a<0, \quad \forall x\in \Omega.
\end{align*}
Now, let us choose \( \varphi \in L^2 (0,A; D(\mathcal{A}^{\ast}_m)) \) and define
\begin{align}
g^{\epsilon}(x,t) = \int_0^A \mathds{1}_{[0,\epsilon]}(a) v^{\epsilon}(x,a,t)\varphi(x,a,t)da.
\end{align}
As \( \epsilon \) approaches zero, we obtain:
\begin{align}
    \lim_{\epsilon\to 0} g^{\epsilon}(x,t)=\lim_{\epsilon\to 0}\int_0^\epsilon \dfrac{f(x,a,t)}{k^{\epsilon}(x,a,t)}\varphi(x,a,t)da.
\end{align}
For sufficiently small \( \epsilon \) such that \( \epsilon<A-(t-a) \) for all \( a\in [0,\epsilon] \), we have:
\begin{align}\label{eq2.60}
k^{\epsilon}(x,a,t)=\int_{0}^{A-(t-a)}\frac{e^{-z\triangle}\mathds{1}_{\omega\times[0,\epsilon]}(x,z)}{\pi(z)}dz =\int_{0}^{\epsilon}\frac{e^{-z\triangle}\mathds{1}_{\omega}(x)}{\pi(z)}dz.
\end{align}
Consequently,
\begin{align*}
g^{\epsilon}(x,t)= \frac{1}{\displaystyle\int_{0}^{\epsilon}\frac{e^{-z\triangle}\mathds{1}_{\omega}(x)}{\pi(z)}dz} \int_0^\epsilon f(x,a,t)\varphi(x,a,t)da.
\end{align*}
Taking the limit as \( \epsilon \to 0 \), we get:
\begin{align*}
    \lim_{\epsilon\to 0} g^{\epsilon}(x,t)=\dfrac{f(x,0,t)\varphi(x,0,t)}{\mathds{1}_{\omega}(x)}\quad\text{for all}\quad t\in [0,A],\;  x\in \omega.
\end{align*}
Define \( v_1 (x,t)=\dfrac{f(x,0,t)}{\mathds{1}_{\omega}(x)} \) and evaluate at \( t=A \):
\begin{align*}
k^{\epsilon}(x,a,A) =\int_{0}^{a}\frac{e^{-z\triangle}\mathds{1}_{\omega}(x)}{\pi(z)}dz\quad \text{for } a\in [0,\epsilon].
\end{align*}
Thus,
\begin{align*}
g^{\epsilon}(x,A)=0,
\end{align*} 
as \( \epsilon \) tends to zero.
Hence, we conclude:
\begin{align}
g^{\epsilon}(x,t) \xrightarrow[\epsilon\to 0]{} v_1(x,t)\varphi(x,0,t) \quad\text{a.e.} \quad t\in [0,A], \quad x\in \omega.
\end{align}
Next, we show that \( g^{\epsilon}(x,t) \) is dominated by an integrable function in \( L^2(\omega\times(0,A)) \). For sufficiently small \( \epsilon \) such that \( \epsilon < A - t \) for all \( a \in [0, \epsilon] \) and \( \epsilon < A - (t-a) \), we have:
\begin{align*}
k^{\epsilon}(x,a,t)=\int_{0}^{\epsilon}\frac{e^{-z\triangle}\mathds{1}_{\omega}(x)}{\pi(z)}dz.
\end{align*}
By equation \eqref{eq2.60}, for \( t \in [0,A-\epsilon] \). Otherwise, if \( t \in [A-\epsilon,A] \), we also have \( A-(t-a) < \epsilon \), and we construct \( k^{\epsilon} \) over \( 0 < a < \epsilon+t-A \) and then over \( \epsilon+t-A < a < \epsilon \).
For \( \epsilon+t-A < a < \epsilon \), we always obtain:
\begin{align*}
k^{\epsilon}(x,a,t)=\int_{0}^{\epsilon}\frac{e^{-z\triangle}\mathds{1}_{\omega}(x)}{\pi(z)}dz.
\end{align*}
For \( 0<a<\epsilon+t-A \), since \( A-t+a<\epsilon \), we get:
\begin{align*}
k^{\epsilon}(x,a,t)=\int_{0}^{A-(t-a)}\frac{e^{-z\triangle}\mathds{1}_{\omega}(x)}{\pi(z)}dz.
\end{align*}
In summary, for \( t\in [A-\epsilon,A] \) we have
\begin{equation}
k^{\epsilon}(x,a,t)=\left\lbrace
\begin{array}{ll}
\displaystyle\int_{0}^{A-(t-a)}\frac{e^{-z\triangle}\mathds{1}_{\omega}(x)}{\pi(z)}dz\quad a\in [0,\epsilon+t-A],\\
\\\displaystyle\int_{0}^{\epsilon}\frac{e^{-z\triangle}\mathds{1}_{\omega}(x)}{\pi(z)}dz \quad a\in [\epsilon+t-A,\epsilon].
\end{array}
\right.
\end{equation}
Thus, for \( t\in [0,A] \) the function \( g^{\epsilon}(x,t) \) can be written as
\begin{equation}\label{eq2.63}
g^{\epsilon}(x,t)=
\begin{cases}
\dfrac{1}{\displaystyle\int_{0}^{\epsilon}\frac{e^{-z\triangle}\mathds{1}_{\omega}(x)}{\pi(z)}dz}\,\displaystyle\int_0^\epsilon f(x,a,t)\varphi(x,a,t)da, & \text{if } t\in [0,A-\epsilon],\quad x\in \omega, \\[2ex]
\displaystyle\int_0^{\epsilon+t-A} \frac{f(x,a,t)}{\displaystyle\int_{0}^{A-(t-a)}\frac{e^{-z\triangle}\mathds{1}_{\omega}(x)}{\pi(z)}dz}\varphi(x,a,t)da \\[1.5ex]
\quad\quad+\; \dfrac{1}{\displaystyle\int_{0}^{\epsilon}\frac{e^{-z\triangle}\mathds{1}_{\omega}(x)}{\pi(z)}dz}\,\displaystyle\int_{\epsilon+t-A}^\epsilon f(x,a,t)\varphi(x,a,t)da, & \text{if } t\in [A-\epsilon,A],\quad x\in \omega.
\end{cases}
\end{equation}

On the one hand, when \( t\in [0,A-\epsilon] \) (assuming \(\epsilon\in [0,A/2]\)), we have
\[
\left|g^{\epsilon}(x,t)\right| \leq \Vert f(x)\Vert_{L^{\infty}([0,A]^2)}\,\Vert\varphi(x,\cdot,t)\Vert_{L^{\infty}([0,A/2])},\quad x\in \omega,\quad \text{for a.e. } t\in (0,A).
\]
On the other hand, for \( t\in [A-\epsilon,A] \) one obtains
\[
\left|g^{\epsilon}(x,t)\right| \leq \int_0^{\epsilon} \frac{\bigl| f(x,a,t)\varphi(x,a,t)\bigr|}{A-(t-a)}da + \int_{0}^\epsilon \frac{\bigl| f(x,a,t)\varphi(x,a,t)\bigr|}{\epsilon}da,\quad x\in \omega,\quad \text{for a.e. } t\in (0,A).
\]

Recalling equation \eqref{eq2.57} and performing a change of variables (by setting \( u=z-(t-a) \)), we write
\[
f(x,a,t)=-\int_{0}^{A-(t-a)} \beta\bigl(x,z+(t-a)\bigr)\frac{\pi(z+(t-a))}{\pi(z)}e^{(t-a)\triangle}y_{0}(x,z)\,dz.
\]
Since 
\[
\left\Vert \frac{\pi(z+(t-a))}{\pi(z)} \right\Vert \leq 1,
\]
\(\beta\in L^{\infty}(\Omega \times [0,A])\) and, by \cite[Lemma 4.1]{ref3}, we deduce that
\[
\left| f(x,a,t)\right| \leq \Vert \beta\Vert_{L^{\infty}(\Omega\times [0,A])}\,\int_{0}^{A-(t-a)} \Vert y_{0}(\cdot,z)\Vert_{L^{\infty}(\Omega)}dz.
\]
Applying the Cauchy–Schwarz inequality yields
\[
\left| f(x,a,t)\right| \leq \Vert \beta\Vert_{L^{\infty}(\Omega\times [0,A])}\,\sup_{x\in\Omega}\left(\int_{0}^{A-(t-a)}y_{0}^2(x,z)\,dz\right)^{1/2}\,\left(\int_{0}^{A-(t-a)}dz\right)^{1/2},
\]
so that
\[
\left| f(x,a,t)\right| \leq \Vert \beta\Vert_{L^{\infty}(\Omega\times [0,A])}\,\Vert y_{0}\Vert_{L^2(\Omega\times[0,A])}\,\sqrt{A-(t-a)}.
\]

Returning to equation \eqref{eq2.63}, we obtain
\[
\left| g^{\epsilon}(x,t)\right| \leq \Vert \beta\Vert_{L^{\infty}(\Omega\times [0,A])}\,\Vert y_{0}\Vert\,\Vert\varphi(x,\cdot,t)\Vert_{L^{\infty}([0,A/2])}\,\int_0^{A/2}\frac{\sqrt{A-(t-a)}}{A-(t-a)}da
\]
\[
\quad + \; \Vert f(x)\Vert_{L^{\infty}([0,A]^2)}\,\Vert\varphi(x,\cdot,t)\Vert_{L^{\infty}([0,A/2])}\,\int_{0}^{A/2}\frac{2}{A}da.
\]
we deduce that
\[
\left| g^{\epsilon}(x,t)\right| \leq \Vert \beta\Vert_{L^{\infty}(\Omega\times [0,A])}\,\Vert y_{0}\Vert\,\Vert\varphi(x,\cdot,t)\Vert_{L^{\infty}([0,A/2])}\,\int_0^{A/2}\frac{1}{\sqrt{A-(t-a)}}da
\]
\[
\quad + \; 2\,\Vert f(x)\Vert_{L^{\infty}([0,A]^2)}\,\Vert\varphi(x,\cdot,t)\Vert_{L^{\infty}([0,A/2])}.
\]
For \( t\in [A-\epsilon,A] \) we have
\[
\int_0^{A/2}\frac{1}{\sqrt{A-(t-a)}}da=2\Bigl(\sqrt{\frac{3A}{2}-t}-\sqrt{A-t}\Bigr)\leq 2\sqrt{\frac{3A}{2}-t}\leq 2\sqrt{\frac{3A}{2}}.
\]
Hence, we obtain
\[
\left| g^{\epsilon}(x,t)\right| \leq 2\sqrt{\frac{3A}{2}}\,\Vert \beta\Vert_{L^{\infty}(\Omega\times [0,A])}\,\Vert y_{0}\Vert\,\Vert\varphi(x,\cdot,t)\Vert_{L^{\infty}([0,A/2])}
\]
\[
\quad + \; 2\,\Vert f(x)\Vert_{L^{\infty}([0,A]^2)}\,\Vert\varphi(x,\cdot,t)\Vert_{L^{\infty}([0,A/2])},\quad x\in \omega,\quad \text{for a.e. } t\in (0,A).
\]
Define the function \( h \) by
\[
h(x,t)=2\sqrt{\frac{3A}{2}}\,\Vert \beta\Vert_{L^{\infty}(\Omega\times [0,A])}\,\Vert y_{0}\Vert\,\Vert\varphi(x,\cdot,t)\Vert_{L^{\infty}([0,A/2])}
+2\,\Vert f(x)\Vert_{L^{\infty}([0,A]^2)}\,\Vert\varphi(x,\cdot,t)\Vert_{L^{\infty}([0,A/2])}.
\]
Since \( h(x,t)\in L^2(\omega\times[0,A]) \), it follows that
\[
\left| g^{\epsilon}(x,t)\right| \leq h(x,t),\quad x\in \omega,\; t\in [0,A].
\]
Thus, by Lebesgue’s dominated convergence theorem, we conclude that
\[
g^{\epsilon}(\cdot,\cdot) \longrightarrow v_1(\cdot,\cdot)\varphi(\cdot,0,\cdot) \quad \text{in } L^2(\omega\times[0,A]),
\]
which, together with Proposition \ref{th1.3}, implies that
\[
\lim_{\epsilon\to 0}\mathds{1}_{\lbrace\omega\times(0,\epsilon)\rbrace}\,v^{\epsilon} \rightharpoonup \mathds{1}_{\omega\times\lbrace 0\rbrace}\, v_1 \quad \text{weakly in } L^2(\omega\times[0,A]),
\]
with
\begin{align}
    v_1(x,t)= -\int_{t}^A \beta(x,a)\pi(a)\, \frac{e^{t\triangle}y_{0}(x,a-t)}{\pi(a-t)\mathds{1}_{\omega}(x)}\,da \quad \text{for all } t<a.
\end{align}
{\bf Step 2} : For \( t-a>0 \), we have
\begin{align}
y^{\epsilon}(x,a,t)=\pi(a)e^{a\triangle}\eta(x,t-a)\left[1-\dfrac{\displaystyle\int_{0}^{a}\frac{e^{-s\triangle}\mathds{1}_{\omega}(x)\mathds{1}_{[0,\epsilon]}(z)}{\pi(z)}dz}{\displaystyle\int_{0}^{A-(t-a)}\frac{e^{-s\triangle}\mathds{1}_{\omega}(x)\mathds{1}_{[0,\epsilon]}(z)}{\pi(z)}dz}\right].
\end{align}
Assuming that \(\epsilon\) is sufficiently small so that \(\epsilon < A-(t-a)\) and restricting to \(a\in [0,\epsilon]\), the expression simplifies to
\begin{align}
y^{\epsilon}(x,a,t)=\pi(a)e^{a\triangle}\eta(x,t-a)\left[1-\dfrac{\displaystyle\int_{0}^{a}\frac{e^{-s\triangle}\mathds{1}_{\omega}(x)}{\pi(s)}ds}{\displaystyle\int_{0}^{\epsilon}\frac{e^{-z\triangle}\mathds{1}_{\omega}(x)}{\pi(z)}dz}\right],
\end{align}
Hence, we obtain
\begin{align}
\lim_{\epsilon\longrightarrow 0} y^{\epsilon}(x,a,t)=0 \qquad\text{for every}\quad t-a>0,\quad x\in \Omega\;\text{a. e.}.
\end{align}

Next, we demonstrate that for each fixed \(t\in[0,A]\), the function \(y^{\epsilon}(\cdot,\cdot,t)\) is dominated by an element of \(L^{2}(0,A; L^2(\Omega))\). According to \eqref{e3.5}, we have
\begin{align}
y^{\epsilon}(x,a,t)=\pi(a)e^{a\triangle}\eta(x,t-a)\left[1-\dfrac{\displaystyle\int_{0}^{a}\frac{e^{-s\triangle}\mathds{1}_{\lbrace\omega\times[0,\epsilon]\rbrace}(x,z)}{\pi(z)}dz}{\displaystyle\int_{0}^{A-(t-a)}\frac{e^{-z\triangle}\mathds{1}_{\lbrace\omega\times[0,\epsilon]\rbrace}(x,z)}{\pi(z)}dz}\right] \qquad t-a>0, \quad \forall \;x\in \Omega.
\end{align}
Thus, by taking the \(L^2\)-norm and applying the uniform bound \(\Vert \eta\Vert_{L^{\infty}(\Omega\times[0,A])}\), we obtain
\begin{align}
\int\limits_0^A \Vert y^{\epsilon}(.,a,t)\Vert_{L^2(\Omega)}^2 da\leq\Vert\eta\Vert^2_{L^{\infty}(\Omega\times[0,A])}\int\limits_0^A\int\limits_\Omega\left[1-\dfrac{\displaystyle\int_{0}^{a}\frac{e^{-s\triangle}\mathds{1}_{\lbrace\omega\times[0,\epsilon]\rbrace}(x,z)}{\pi(z)}dz}{\displaystyle\int_{0}^{A-(t-a)}\frac{e^{-z\triangle}\mathds{1}_{\lbrace\omega\times[0,\epsilon]\rbrace}(x,z)}{\pi(z)}dz}\right]^2dxda,
\end{align}
This yields the uniform estimate
\begin{align}
\Vert y^{\epsilon}(.,.,t)\Vert_{L^2(\Omega\times[0,A])}^2\leq\Vert\eta\Vert^2_{L^{\infty}(\Omega\times[0,A])}A \vert \Omega\vert,\quad t\in (0,A).
\end{align}

In summary, \(y^{\epsilon}(\cdot,\cdot,t)\) converges in \(L^{2}(0,A; L^2(\Omega))\) for every \(t\in (0,A)\) to the controlled state \(y(x,a,t)\) defined by
\begin{equation}
y(x,a,t)=\left\lbrace
\begin{array}{ll}
\dfrac{\pi(a)}{\pi(a-t)}e^{t\triangle}y_{0}(x,a-t)&\qquad  t-a<0,\\
\\0 &\qquad t-a>0.
\end{array}
\right.
\end{equation}

{\bf Step 3 } : The system state in \eqref{ee4.7} can be written as follows:
\begin{equation}
y(x,a,t)=\left\lbrace
\begin{array}{ll}
\frac{\pi(a)}{\pi(a-t)}e^{t\triangle}y_{0}(x,a-t)+\pi(a)e^{a\triangle}\mathds{1}_{\omega}(x)v(x,t-a)\quad t\leq a,\quad (x\in \Omega),\\
\\\pi(a)e^{a\triangle} b(x,t-a)+\pi(a)e^{a\triangle}\mathds{1}_{\omega}(x)v(x,t-a)\quad t> a, \quad (x\in \Omega).
\end{array}
\right.
\end{equation}
By applying the same approach as in Propositions \ref{pr2.3} and \ref{pr2.5} from the Appendix~\ref{annexe:A} to system \eqref{eq1.1}, we obtain the null-controllability result.  In particular, the null-control is given by
\begin{equation}\label{eq2.81}
v(x,t)=
\begin{cases}
0, & \quad t-a<0,\\[1ex]
-\displaystyle\int_{t}^A \beta(x,a)\pi(a)\,\frac{e^{t\triangle}y_{0}(x,a-t)}{\pi(a-t)\mathds{1}_{\omega}(x)}\,da, & \quad t-a>0,
\end{cases}
\end{equation}
where
\[
b(x,t)=\int_{t}^A \beta(x,a)\pi(a)\,\frac{e^{t\triangle}y_{0}(x,a-t)}{\pi(a-t)}da.
\]
Thus, the controlled state becomes
\begin{equation}\label{e4.25}
y(x,a,t)=\left\lbrace
\begin{array}{ll}
\dfrac{\pi(a)}{\pi(a-t)}e^{t\triangle}y_{0}(x,a-t)&\qquad  t-a<0,\\
\\0 &\qquad t-a>0.
\end{array}
\right.
\end{equation}
 Accordingly, using the same strategy as in Proposition \ref{th1.3} of Appendix~\ref{annexe:A}, we establish that the control $v \equiv v_1$, applied to systems \eqref{eq1.1}–\eqref{eq1.6} with the controlled state defined by \eqref{e4.25}, satisfies condition \eqref{e3.8} at time $T > A$.

 Moreover, for every $(x,t)\in\omega\times(0,T)$, define

\begin{align}
  I(x,t)=\int_{t}^{A}\beta(x,a)\,\pi(a)\,\frac{(e^{t\Delta}y_{0})(x,a-t)}{\pi(a-t)}\,da.  
\end{align}
Then,

\begin{align}
    \|v\|^2_{L^2(\omega\times(0,T))} = \int_{0}^{T}\!\int_{\omega}|I(x,t)|^2\,dx\,dt.
\end{align}

Since $\pi$ is non-increasing on $[0,A]$, we have $\left\Vert \frac{\pi(a)}{\pi(a-t)} \right\Vert \leq 1,$ for all $a \ge t$. Therefore,

$$
\left|\beta(x,a)\,\pi(a)\,\frac{(e^{t\Delta}y_{0})(x,a-t)}{\pi(a-t)}\right|
\le \|\beta\|_{L^\infty}\,\left|e^{t\Delta}y_{0}(x,a-t)\right|.
$$

By the Cauchy–Schwarz inequality, it follows that

$$
|I(x,t)|^2 \le \|\beta\|_{L^\infty}^2\,(A-t)\int_{a=t}^{A}\left|e^{t\Delta}y_{0}(x,a-t)\right|^2\,da.
$$

Setting $s = a - t$, and using \cite[Lemma 4.1]{ref3}, we obtain

$$
\|v\|^2_{L^2(\omega\times(0,T))} \le \|\beta\|_{L^\infty}^2\int_{0}^{T}(A-t)\int_{0}^{A-t}\!\int_{\Omega}|y_{0}(\xi,s)|^2\,d\xi\,ds\,dt.
$$

Applying Fubini’s theorem to the domain $\{0 \le t \le A,\; 0 \le s \le A-t\}$, and observing that

$$
\int_{0}^{A-s}(A-t)\,dt = \tfrac12(A-s)^2 \le \tfrac12 A^2,
$$

we finally get

$$
\|v\|^2_{L^2(\omega\times(0,T))} \le  \frac{\|\beta\|_{L^\infty}^2 A^2}{2} \,\|y_{0}\|^2.
$$

Consequently, by setting $K(A) =\frac{ \|\beta\|_{L^\infty} A}{\sqrt{2}}$, we deduce the estimate

\begin{align}
    \|v\|_{L^2(\omega\times(0,T))} \le K(A)\,\|y_{0}\|.
\end{align}

\end{proof}
\begin{remark}
In accordance with Theorem \ref{th1.4}, the boundary control problem \eqref{eq1.1} is reformulated as the control problem \eqref{eq1.6} by introducing an age-dependent Dirac delta, which naturally leads to the definition of the operator $B$.
\end{remark}
\section{Proof of Exponential Turnpike Property}\label{s5}
\subsection{Linear-Quadratic optimal control problems}

In this section, we introduce an optimal control system governed by a quadratic cost functional to be minimized. Such problems are commonly referred to as linear–quadratic optimal control problems. By Theorem \ref{th1.4}, for every $T>A$ there exists a control $v\in L^2(\omega\times[0,T])$ such that the solution $y$ of \eqref{eq1.1} satisfies

$$
y(x,a,T)=0 \quad \text{a.e. }(x,a)\in \Omega\times[0,A],
$$

for all initial data $y_0\in H$; this property is equivalent to trajectory controllability. Our goal is to steer the state toward a prescribed trajectory $\bar y$ so as to track a continuously evolving target $y_d\in H$.

We now formulate the corresponding linear–quadratic optimal control problem. Originating from a population dynamics model with age structure and spatial diffusion, this dynamic optimization problem consists in finding $v^*\;\in\;L^2(\omega\times[0,T])
$
that minimizes the cost functional
 \begin{align}\label{eq3.1}
     J_1(v^*)=\min_{v\in L^2(\omega\times[0,T])}J_1(v)
 \end{align}
	where
 \begin{align}\label{eq3.2}
     J_1(v)=\dfrac{N}{2}\int\limits_{0}^{T}\Vert y(t)-y_d\Vert^2 dt+\dfrac{1}{2}\int\limits_{0}^{T}\Vert v(t)\Vert^2_{L^2(\omega)}dt+\dfrac{1}{2}\langle y_T(T),p(T)\rangle
 \end{align}
   and $N>0,$  
    its solution $y(t)=y(x,a,t)$ subject to 
\begin{equation}\label{eq3.3}	
\left\lbrace\begin{array}{ll}
\dfrac{\partial y}{\partial t}(x,a,t)=\mathcal{A}_m y(x,a,t)+ Bv(x,a,t)&\hbox{ in }Q,\\  
\\y\left(x,a,0\right)=y_{0}(x,a)&\hbox{ in }  Q_A.
\end{array}\right.
\end{equation}
 The system \eqref{eq1.1} is reformulated in the form \eqref{eq3.3} thanks to the construction from {\bf Section \ref{s2} (and Theorem \ref{th1.4})}, with the control operator denoted by $B$. Minimizing the functional \eqref{eq3.1} subject to constraint \eqref{eq3.3} corresponds to the “linear regulator problem,” or equivalently, the linear-quadratic problem.

To study the turnpike property, we introduce the steady‑state versions of \eqref{eq3.1} and \eqref{eq3.3}. We thus consider the associated stationary system :
\begin{equation}\label{eq3.4}	 
-\mathcal{A}_m y(x,a)=Bv(x,a) \qquad\text{in}\; Q_1,
\end{equation}
and the following minimization problem 
\begin{equation}\label{eq3.5}
J_2(\bar{v})=\min_{v\in L^2(\omega)}J_2(v),
\end{equation}
where 
\begin{equation}\label{eq3.6}
J_2(v)=\frac{N}{2}\Vert y-y_d\Vert^2+\frac{1}{2}\Vert v\Vert^2_{L^2(\omega)},
\end{equation}
and $y$ denotes the solution to \eqref{eq3.4}.  We now state the following proposition.
\begin{proposition}\label{pr3.1}
There exists an optimal control minimizing the cost functional \(J_1\).
\end{proposition}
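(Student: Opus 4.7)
The plan is to apply the direct method of the calculus of variations on the Hilbert space $L^2(\omega\times[0,T])$. First I would fix notation: for each admissible control $v$, denote by $y_v$ the unique mild solution of \eqref{eq3.3}. By Section \ref{s2} (Lemma \ref{l2.5} and Definition \ref{d2.6}), $y_v$ depends affinely and continuously on $v$, with $\|y_v\|_{L^2(0,T;H)}\leq C(T)(\|y_0\|+\|v\|_{L^2(\omega\times(0,T))})$. In particular, the terminal value $y_v(T)=\mathcal{T}(T)y_0+\Phi_T v$ lies in $H$ by the admissibility Lemma \ref{l2.5}, and $v\mapsto y_v(T)$ is affine and continuous from $L^2(\omega\times(0,T))$ to $H$.

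Next I would establish that $J_1$ is bounded below and coercive. The first two terms of $J_1$ are nonnegative. For the terminal cross term, Cauchy–Schwarz combined with the continuity estimate above yields
\begin{equation*}
\tfrac{1}{2}|\langle y_v(T),p(T)\rangle|\;\leq\;\tfrac{1}{2}\|y_v(T)\|_{H}\,\|p(T)\|_{H}\;\leq\;C_1\|v\|_{L^2(\omega\times(0,T))}+C_2,
\end{equation*}
so that
\begin{equation*}
J_1(v)\;\geq\;\tfrac{1}{2}\|v\|^2_{L^2(\omega\times(0,T))}-C_1\|v\|_{L^2(\omega\times(0,T))}-C_2,
\end{equation*}
which is bounded below and coercive on $L^2(\omega\times(0,T))$.

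I would then pick a minimizing sequence $(v_n)$ with $J_1(v_n)\to\inf J_1$. Coercivity gives a uniform bound on $(v_n)$, so up to a subsequence $v_n\rightharpoonup v^\ast$ weakly in $L^2(\omega\times(0,T))$. By the affine continuity of $v\mapsto y_v$, we obtain $y_{v_n}\rightharpoonup y_{v^\ast}$ weakly in $L^2(0,T;H)$, and by the admissibility map $v\mapsto y_v(T)$ also $y_{v_n}(T)\rightharpoonup y_{v^\ast}(T)$ weakly in $H$. The first two terms of $J_1$ are convex and strongly continuous, hence weakly lower semicontinuous; the cross term is a linear functional of $y_v(T)$ (treating $p(T)$ as the fixed terminal adjoint state from the optimality system) and is therefore weakly continuous. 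Combining these facts gives $J_1(v^\ast)\leq\liminf_{n\to\infty}J_1(v_n)=\inf J_1$, so $v^\ast$ is a minimizer. Uniqueness follows from the strict convexity supplied by the term $\tfrac12\|v\|^2$.

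The only real subtlety is the interpretation of $\tfrac{1}{2}\langle y_T(T),p(T)\rangle$: if $p(T)$ is viewed as a fixed element of $H$ prescribed by the transversality condition, the term is linear and weakly continuous as used above; if it is instead understood as $p(T)=Gy(T)$ for a bounded self-adjoint $G\geq0$, the term becomes a convex quadratic and the weak lower semicontinuity argument still goes through without change. Either way the main obstacle, namely transferring weak convergence of $v_n$ into weak convergence of the terminal trace $y_{v_n}(T)$, is resolved by the continuity of $\Phi_T:L^2(\omega\times(0,T))\to H$ established in Lemma \ref{l2.5}.
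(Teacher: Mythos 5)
Your proposal is correct and follows essentially the same route as the paper: the direct method, built on coercivity of $J_1$ (obtained by bounding the terminal cross term $\tfrac12\langle y(T),p(T)\rangle$ linearly in $\|v\|$), strict convexity from the quadratic structure and the affine dependence of $y$ on $v$, and weak lower semicontinuity applied to a minimizing sequence. The only difference is cosmetic: you source the a priori bound $\|y_v(T)\|\le C(\|v\|+\|y_0\|)$ from the admissibility and well-posedness results of Section \ref{s2} (Lemma \ref{l2.5}, Definition \ref{d2.6}), whereas the paper re-derives it by an explicit energy estimate after the change of variables $y=e^{\alpha t}\tilde y$; both yield the same estimate and the same conclusion.
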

To prove that $J_1$ attains its minimum over $L^2(\omega \times [0,T])$, we will establish the following properties:
\begin{enumerate}
\item $J_1$ is convex,
\item \(J_1\)  is weakly lower semicontinuous,
\item \(J_1\) is coercive, i.e., $$\lim_{\Vert v\Vert_{L^2(\omega\times(0,T))} \longrightarrow\infty}J_1(v)=\infty.$$
\end{enumerate}
\begin{proof}[of Proposition \ref{pr3.1}]
{\bf Coercivity of the cost functional $J_1$
}:

Let $y = e^{\alpha t}\tilde{y}$ be a solution of
\begin{equation}\label{eq3.7}	 
\left\lbrace
\begin{array}{ll}
\partial_{t}\tilde{y}(x,a,t) + \partial_{a}\tilde{y}(x,a,t) - \Delta \tilde{y}(x,a,t) + (\mu(a)+\alpha)\tilde{y}(x,a,t) = e^{-\alpha t}Bv(x,a,t) & \text{ in } Q,\\[1mm]
\partial_{\nu}\tilde{y}(x,a,t)=0 & \text{ on } \Sigma,\\[1mm]
\tilde{y}(x,0,t)=\displaystyle\int_{0}^{A}\beta(x,a)\tilde{y}(x,a,t)da & \text{ in } Q_{T},\\[1mm]
\tilde{y}(x,a,0)= y_{0}(x,a) & \text{ in } Q_A,
\end{array}
\right.
\end{equation}
where the parameter \(\alpha\) will be determined later.

Multiplying the first equation by \(\tilde{y}\) and integrating the resulting expression over \(Q\), we obtain
\[
\int_{0}^{A}\int_{\Omega}\tilde{y}^2(x,a,T)\,dx\,da + \int_{0}^{T}\int_{0}^{A}\int_{\Omega}2(\mu(a)+\alpha)\tilde{y}^2(x,a,t)\,dx\,da\,dt
\]
\[
\leq 2\Vert v\Vert^2_{L^2(\omega\times[0,T])} + 2A\Vert \beta\Vert^2_{L^\infty}\int_{0}^{T}\int_{0}^{A}\int_{\Omega}\tilde{y}^2(x,a,t)\,dx\,da\,dt + \Vert y_0\Vert^2.
\]
Choosing \(\alpha = A\Vert \beta\Vert^2 + 1\), it follows that
\begin{align}\label{eq3.8}
    \Vert y(T)\Vert \leq C'\Bigl(\Vert v\Vert_{L^2(\omega\times[0,T])} + \Vert y_0\Vert\Bigr).
\end{align}
Hence, we deduce that
\[
\bigl|\langle y_T(T),p(T)\rangle\bigr| \leq C'\Bigl(\Vert v\Vert_{L^2(\omega\times[0,T])} + \Vert y_0\Vert\Bigr)\Vert p(T)\Vert.
\]
Moreover, since
\[
\frac{N}{2}\int_{0}^{T}\Vert y(t)-y_d\Vert^2\,dt > 0,
\]
we obtain
\[
J_1(v) \geq \Vert v\Vert_{L^2(\omega\times[0,T])}\left(\frac{1}{2}\Vert v\Vert_{L^2(\omega\times[0,T])} - C'\Vert p(T)\Vert\right) - C'\Vert y^0\Vert\Vert p(T)\Vert.
\]
Since the right-hand side tends to infinity as \(\Vert v\Vert \to \infty\), we conclude that
\[
\lim_{\Vert v\Vert \to \infty}J_1(v)=\infty,
\]
thus establishing the coercivity of the cost in \(L^2(\omega\times[0,T])\).

{\bf The Convexity of the Cost} :

The functional \(J_1\) is quadratic, and the dependence of \(y\) on \(v\) is affine. Consequently, \(J_1\) is strictly convex.

{\bf Continuity}
Let $y = y(v,y_0)$ be a solution of \eqref{eq1.1}. The mapping

$$
(v, y_0) \longmapsto y(v, y_0)
$$

defines a continuous function from $L^2(\omega \times (0,T)) \times H$ into $L^2(0,T; H)$. To establish this continuity property, consider an arbitrary pair $(v, y_0) \in L^2(\omega \times (0,T)) \times H$ and set $\bar{y} = y(v, y_0)$. Then $\bar{y}$ satisfies the following problem:
\[
\begin{cases}
\partial_{t}\bar{y} + \partial_{a}\bar{y} - \Delta \bar{y} + \mu(a) \bar{y} = Bv  & \text{in } Q, \\[1ex]
\partial_{\nu}\bar{y} = 0 & \text{on } \Sigma, \\[1ex]
\bar{y}(x,0,t) = \displaystyle\int_{0}^{A}\beta(x,a)\bar{y}\,da & \text{in } Q_{T}, \\[1ex]
\bar{y}(x,a,0) = y_0  & \text{in } Q_A.
\end{cases}
\]

Next, define $z = e^{-rt}\bar{y}$  for some  $r > 0.$ Then, \(z\) is a solution to:
\[
\begin{cases}
\partial_{t}z + \partial_{a}z - \Delta z + (r+\mu(a))z = e^{-rt}Bv +  & \text{in } Q, \\[1ex]
\partial_{\nu}z = 0 & \text{on } \Sigma, \\[1ex]
z(x,0,t) = \displaystyle\int_{0}^{A}\beta(x,a)z\,da & \text{in } Q_{T}, \\[1ex]
z(x,a,0) = y_0 & \text{in } Q_A.
\end{cases}
\]

Multiplying the first equation of this system by \(z\) and integrating by parts over \(Q\), we obtain
\[
\frac{1}{2}\Vert z(\cdot,\cdot,T)\Vert^2 - \frac{1}{2}\Vert y_0\Vert^2 - \frac{1}{2}\Vert z(\cdot,0,t)\Vert_{L^2(\Omega\times(0,T))}^2 + \Vert\nabla z\Vert_{L^2((0,T);H)}^2 + \int_0^T\int_0^A\int_{\Omega}(r+\mu(a))z^2\,dx\,da\,dt = \int_0^T\int_{\omega}e^{-rt}vz\,dx\,dt.
\]

From this, we deduce that
\[
\Vert\nabla z\Vert^2_{L^2((0,T);H)} + r\Vert z\Vert^2 \le \frac{1}{2}\Vert y_0\Vert^2 + \frac{1}{2}\Vert z(\cdot,0,t)\Vert^2_{L^2(\Omega\times(0,T))} + \frac{1}{2}\Vert v\Vert^2 + \frac{1}{2}\Vert z\Vert^2.
\]
Since
\[
\frac{1}{2}\Vert z(\cdot,0,t)\Vert^2_{L^2(\Omega\times(0,T))} \le \frac{1}{2}A\Vert\beta\Vert^2_{L^{\infty}}\Vert z\Vert^2,
\]
we obtain
\[
\Vert\nabla z\Vert^2_{L^2((0,A)\times(0,T))} + \left(r - \frac{A\Vert\beta\Vert^2_{L^{\infty}}+1}{2}\right)\Vert z\Vert^2 \le \frac{1}{2}\Vert y_0\Vert^2 + \frac{1}{2}\Vert v\Vert^2.
\]
Choosing \(r\) such that
\[
r>\frac{A\Vert\beta\Vert^2_{L^{\infty}}+1}{2},
\]
yields
\[
\Vert z\Vert^2_{L^2((0,A)\times(0,T); H^1_0(\Omega))} \le \frac{1}{2}\Vert y_0\Vert^2 + \frac{1}{2}\Vert v\Vert^2.
\]
This implies that
\[
\Vert \bar{y}\Vert_{L^2((0,A)\times(0,T); H^1_0(\Omega))} \le e^{rT}\frac{\sqrt{2}}{2}\left(\Vert y_0\Vert^2 + \Vert v\Vert^2\right).
\]
 
 Since \(H^1_0(\Omega)\) is compactly embedded in \(L^2(\Omega)\), it follows that \(\bar y\) is strongly continuous in $L^2\bigl((0,A)\times(0,T);\,L^2(\Omega)\bigr).$

 \(J_1\) is continuous because the state \(y\) depends continuously on the initial data \((v,y_0)\). Therefore, by virtue of its continuity and coercivity, \(J_1\) is weakly lower semicontinuous. It follows that $v^* \in L^2(\omega \times [0,T])$ satisfies \eqref{eq3.1} and therefore $v^*$ is an optimal control. We denote it by $v_T$, corresponding to the optimal state $y_T$.
\end{proof}
 Considering the stationary version of the state equation, the associated steady‑state cost $J_2$ admits an optimal control for problem \eqref{eq3.4} by the same standard methods used in Proposition \ref{pr3.1}. We denote this optimal pair by $(\bar y,\bar v)$.
\subsection{Preliminary}
In this subsection, we present, on the one hand, the optimality systems associated with the dynamic and steady control problems based on Pontryagin’s maximum principle, and on the other hand, some tools (or definitions) useful for the study of the stabilization of a dynamic system.

{\bf Necessary Optimality Conditions for the Dynamic Optimal Control Problem} :  
By applying Pontryagin’s Maximum Principle to the optimal pair \((y_T,v_T)\) associated with \eqref{eq3.1}, we deduce the existence of an adjoint variable \(p_T \in L^2([0,T]; H)\) such that
 \begin{equation}\label{eq3.9}	
\left\lbrace\begin{array}{ll}
\dfrac{\partial y_T}{\partial t}(x,a,t)=\mathcal{A}_m y_T(x,a,t)+Bv_T(x,a,t) &\hbox{ in }Q,\\  
\\y_T\left(x,a,0\right)=y_{0}(x,a)&\hbox{ in }  Q_A,\\
\\-\dfrac{\partial p_T}{\partial t}(x,a,t)-\mathcal{A}^*_m p_T(x,a,t)=N(y_T-y_d) &\hbox{ in }Q,\\  
\\p_T\left(x,a,T\right)=p_T(T) &\hbox{ in }Q_A,\\
\\ v_T(x,a,t)=-B^*p_T(x,a,t) &\hbox{ in }Q.
	\end{array}\right.
\end{equation}
{\bf Necessary Optimality Conditions for the Static Optimal Control Problem} : Similarly, by applying the Pontryagin Maximum Principle to any optimal pair $(\bar{y}, \bar{v})$ of \eqref{eq3.5}, we deduce the existence of an adjoint state $\bar{p} \in H$ such that:
 \begin{equation}\label{eq3.10}	 
	\left\lbrace\begin{array}{ll}
	-\mathcal{A}_m\bar{y}(x,a)=B\bar{v} (x,a) &\hbox{ in }Q_1,\\  
\\-\mathcal{A}^*_m \bar{p}(x,a)=N(\bar{y}-y_d) &\hbox{ in }Q_1,\\
\\\bar{v}(x,a)=-B^*\Bar{p}(x,a) & \hbox{ in }Q_1.
	\end{array}\right.
\end{equation}
The notion of stability plays a crucial role in the realization of the turnpike phenomenon. Its characterization in infinite-dimensional settings is significantly more intricate than in the finite-dimensional case. In \cite{ref31}, it is already established that if a linear control system is both controllable and observable in finite dimensions, then there exists a linear feedback law  \(y\mapsto Ky\) such that the zero equilibrium is globally asymptotically stable for the equation  
\[
\partial_t y = \mathcal{A}_my + BK y.
\]
Essentially, there are three main types of conditions that ensure the stabilizability of a system state, such as that in (\ref{eq3.3}). The first type requires specific structural properties of the generator $K$ ; the second relies on the concept of controllability; and the third is expressed in terms of a corresponding Riccati operator equation.

System (\ref{eq3.3}) is said to be exponentially stabilizable if it is possible to design a state feedback law that stabilizes it, as specified in the following definition.
\begin{definition}\label{de3.3}
The state equation in (\ref{eq3.3}) is said to be stabilizable if there exists a feedback matrix \(K\) such that
\[
\Vert \exp(t(\mathcal{A}_m - BK)) y_0 \Vert \leq \Vert y_0 \Vert \exp(-t\nu)
\]
for some constants \(C,\nu > 0\).
\end{definition}

In practical applications, the choice of $K$ is crucial for achieving optimal performance of system (\ref{eq3.3}). In general, the objective is to design robust feedback laws suitable for uncertain linear control systems. Various tools are available to tackle this problem. In our approach, we derive a simplified form of optimal control in which the control is expressed as a linear function of the corresponding state, a formulation that is particularly advantageous in engineering applications. Specifically, this feedback is constructed using the Riccati operator associated with the finite-horizon problem. This operator is known to generate a feedback control law that ensures exponential stability over an infinite time horizon, as shown in \cite{ref32}.
 \subsection{Proof of  Theorem \ref{th1.5}}

Before proceeding further, we define the following cost functional for the case where the target satisfies \(y_d\equiv 0\) :
 \begin{equation}\label{eq3.11}
J_1^0(v_T)=\dfrac{N}{2}\int\limits_{0}^{T}\Vert y_T(t)\Vert^2dt+\dfrac{1}{2}\int\limits_{0}^{T}\Vert v_T(t)\Vert_{L^2(\omega)}^2dt+\dfrac{1}{2}\Vert y_T(T)\Vert^2,
\end{equation} 
with \(v_T\) being the optimal solution to
\begin{equation}\label{eq3.12}
    \min\limits_{v\in L^2(\omega\times[0,T])}J_1^0(v).
\end{equation}
The cost functional is well-defined for every $v\in L^2(\omega\times[0,T])$. Indeed, since the $C_0$-semigroup $e^{t\mathcal{A}_m}$ is exponentially stable, and therefore $L^2$-stable, it follows that $e^{t\mathcal{A}_m}y_0$ belongs to $L^2(0,T;H)$ (see, e.g., \cite{ref59} for details).

Since equation \((\ref{eq3.3})\) has been shown to be null-controllable under the condition $T > A$, we now state the following Lemma.

\begin{lemma}\label{le3.4}
Then there exists a unique, symmetric, positive definite matrix \(\mathcal{E}\), belonging to \(C^1\left([0,+\infty);H\right)\), which solves the Riccati differential equation associated with the linear–quadratic optimal control problem (\ref{eq3.12}) as follows:
\begin{equation}\label{eq3.13}
\left\lbrace\begin{array}{ll}
\mathcal{E}_t(T-t)=I.N+\mathcal{E}(T-t)\mathcal{A}_m+\mathcal{A}_m^{*}\mathcal{E}(T-t)-\mathcal{E}(T-t)BB^*\mathcal{E}(T-t)&\hbox{ in } (0,\infty)\\
\mathcal{E}(0)=I
\end{array}\right.
\end{equation}
with the terminal condition \(p(x,a,T)=y(x,a,T)\), and the corresponding feedback control law
\[
v_T=-B^*\mathcal{E}(T-t)y_T.
\]

Furthermore, the following properties hold:
\begin{enumerate}
    \item[(i)] There exists \(M>0\) such that \(0<\mathcal{E}(t)\leq M\) for every \(t>0\).
    \item[(ii)]  For all \(t_1 \leq t_2\), we have \(\mathcal{E}(t_1)\leq \mathcal{E}(t_2)\).
    \item[(iii)]  As \(t \to +\infty\), \(\mathcal{E}(t)\) converges to \(\hat{E}\), where \(\hat{E}\) is the unique symmetric, positive definite matrix satisfying the algebraic Riccati equation
\begin{equation}\label{eq3.14}	\hat{E}\mathcal{A}_m+\mathcal{A}_{m}^{*}\hat{E}-\hat{E}BB^*\hat{E}+I.N=0,
\end{equation}
\item[(iv)]  There exists a Lyapunov function of the form

 \begin{align}
      V(y_T) = \langle \hat{E}\,y_T,\;y_T\rangle
 \end{align}
 such that the closed-loop system

 \begin{equation}\label{eq3.15}
\left\lbrace\begin{array}{ll}
\partial_t y_T=(\mathcal{A}_m-BB^*\hat{E})y_T&\hbox{ in }(0,\infty)\\
y_T(0)=y_0
\end{array}\right.
\end{equation}
 is globally asymptotically stable.
\end{enumerate}
\end{lemma}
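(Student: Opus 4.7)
The plan is to invoke the standard infinite-dimensional linear-quadratic (LQ) theory for systems with admissible control operators, adapted to our setting via the stabilizability that follows from the null-controllability result of Theorem \ref{th1.4}. First I would verify that $(\mathcal{A}_m,B)$ is exponentially stabilizable: since by Theorem \ref{th1.4} the system can be driven to zero in any time $T>A$ with the quantitative estimate $\|v\|_{L^2(\omega\times(0,T))}\le K(A)\|y_0\|$, an application of Phillips' theorem (combined with the admissibility of $B$ established in Section \ref{s2}) yields a feedback $K\in\mathcal L(H,L^2(\omega))$ such that $\mathcal A_m-BK$ generates an exponentially stable $C_0$-semigroup on $H$. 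This is the key structural input that makes the Riccati theory applicable.

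Next I would construct $\mathcal E(\cdot)$ via the value function of \eqref{eq3.12}: define
\[
\langle \mathcal E(T-t)\,y,\,y\rangle \;=\; \inf_{v\in L^2(\omega\times(t,T))} J_1^0(v;\,y,t),
\]
where $J_1^0(v;y,t)$ denotes the cost \eqref{eq3.11} evaluated on $[t,T]$ with state equal to $y$ at time $t$. Polarization defines $\mathcal E(T-t)$ as a symmetric bounded operator; strict positivity follows from the presence of $\tfrac12\|y(T)\|^2$ and the running cost. Using the dynamic programming principle together with the admissibility of $B$ and the regularity of the corresponding mild solutions, one obtains the differential Riccati equation \eqref{eq3.13} in the weak sense on $D(\mathcal A_m^*)\times D(\mathcal A_m^*)$, and the optimal feedback $v_T=-B^*\mathcal E(T-t)y_T$ drops out of Pontryagin's relation $p_T(t)=\mathcal E(T-t)y_T(t)$ applied to \eqref{eq3.9}. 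The $C^1$-regularity in $t$ is inherited from the smoothness of the value function in the time variable.

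For properties (i)--(iii) I would proceed as follows. Boundedness (i) is obtained by testing the value function against the stabilizing feedback $v(s)=-Ky(s)$: exponential stability of $\mathcal A_m-BK$ gives
\[
\langle \mathcal E(t)\,y,\,y\rangle \;\le\; J_1^0(-Ky(\cdot);\,y,0) \;\le\; M\,\|y\|^2
\]
with $M$ independent of $t$. Monotonicity (ii) follows because extending the horizon can only increase the infimum of a nonnegative cost, so $T\mapsto\langle\mathcal E(T)y,y\rangle$ is nondecreasing. Combining (i) and (ii) with the principle of monotone convergence of bounded self-adjoint operators gives strong convergence $\mathcal E(t)\to\hat E$ as $t\to+\infty$ for some bounded self-adjoint $\hat E\ge 0$; passing to the limit in the integrated form of \eqref{eq3.13} yields the algebraic Riccati equation \eqref{eq3.14}, and uniqueness of the stabilizing solution of the ARE (standard once stabilizability and detectability, which here is trivial since $I\cdot N\succ 0$, are in hand) completes (iii).

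Finally, for (iv) I would set $V(y)=\langle\hat E y,y\rangle$ and differentiate along smooth solutions of the closed-loop system \eqref{eq3.15}. Using the ARE \eqref{eq3.14},
\[
\frac{d}{dt}V(y_T(t)) = \bigl\langle(\mathcal A_m^*\hat E+\hat E\mathcal A_m-2\hat E BB^*\hat E)y_T,y_T\bigr\rangle = -N\|y_T\|^2-\|B^*\hat E y_T\|_{L^2(\omega)}^2,
\]
so $V$ is a strict Lyapunov function and $\|y_T(t)\|\le C\|y_0\|e^{-\nu t}$ for some $\nu>0$, which is global exponential (hence asymptotic) stability. The main obstacle in this plan is the rigorous handling of the unboundedness of $B$ in the Riccati equation: the products $\mathcal E(\cdot)B$ and $B^*\mathcal E(\cdot)$ must be shown to make sense as bounded operators, which relies decisively on the admissibility of $B$ proved in Lemma \ref{l2.5} and on interpreting \eqref{eq3.13} in the appropriate weak form on $D(\mathcal A_m^*)$; everything else is then a careful application of the Flandoli--Lasiecka--Triggiani / Weiss LQ framework for stabilizable systems with admissible control.
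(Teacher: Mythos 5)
Your proposal is correct and follows essentially the same route as the paper: $\mathcal{E}$ is defined through the value function and the Pontryagin adjoint relation $p_T(t)=\mathcal{E}(T-t)y_T(t)$, the upper bound in (i) is obtained by testing the infimum against an explicit admissible control (you use a stabilizing feedback where the paper uses the null-control from Theorem \ref{th1.4}, which is an immaterial variant), monotonicity and convergence follow from horizon comparison and monotone convergence of bounded self-adjoint operators, and (iv) is the identical Lyapunov computation via the algebraic Riccati equation. Your added care about the unboundedness of $B$ and the weak interpretation of \eqref{eq3.13} on $D(\mathcal{A}_m^*)$ is a refinement the paper leaves implicit, not a different argument.
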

 \begin{proof}[of Lemma \ref{le3.4}]
In the reference case $y_d \equiv 0$, we denote by $J_1^0$ the cost functional defined in \eqref{eq3.11}. First, we introduce the matrix term $\mathcal{E}(T)$. The solution of the optimal control problem \eqref{eq3.12} yields an optimal cost given by the nonnegative quadratic form
\begin{align}
    \dfrac{1}{2}\langle\mathcal{E}(T)y_0 ,y_0\rangle.
\end{align}
For \(T>0\), we define the operator 
\[
\mathcal{E}(T): H \to H,\quad \text{by} \quad \mathcal{E}(T)y_0(x,a) := p_T(x,a,0) \quad \text{for } y_0 \in H.
\]
It is evident that 
\[
p_T(x,a,t) = \mathcal{E}(T-t)y_T(x,a,t),
\]
which implies  
\begin{equation}\label{eq3.17}
v_T(x,t) = -B^*\mathcal{E}(T-t)y_T(x,a,t) \quad \forall\, t \in [0,T].
\end{equation}
The expression \eqref{eq3.17} gives the feedback optimal control for the cost functional \eqref{eq3.11}.

The operator $\mathcal{E}(T)$ is obtained by multiplying the first equation of \eqref{eq3.9} by $p_T$ and integrating over $Q$. This yields the identities
 \begin{align}
\langle\mathcal{E}(T)y_0 ,y_0 \rangle = N\int\limits_{0}^{T}\Vert y_T(t)\Vert^2dt+\int\limits_{0}^{T}\Vert p_T(0,t)\Vert_{L^2(\omega)}^2dt+\Vert y_T(T)\Vert^2,
\end{align}
and 
\begin{equation}\label{eq3.19}
\dfrac{1}{2}\langle\mathcal{E}(T)y_0 ,y_0 \rangle =\inf_{v\in L^2 (\omega\times[0,T])} J_1 ^0 (v) .
\end{equation} 
 Thus, the minimal value of the cost functional \eqref{eq3.11} is $
 \frac{1}{2}\bigl\langle \mathcal{E}(T)y_0,\,y_0 \bigr\rangle,
 $ and the corresponding optimal control is given by \eqref{eq3.17}.

{\bf \underline{Case (i)}} : We now prove that there exists a constant \(M>0\) such that \(0 < \mathcal{E}(T) \leq M\) for all \(T > 0\).

Starting from the estimate
\begin{align}
\dfrac{1}{2}\langle\mathcal{E}(T)y_0 ,y_0 \rangle \leq \dfrac{N}{2}\int\limits_{0}^{T}\Vert y(t)\Vert^2dt+\dfrac{1}{2}\int\limits_{0}^{T}\Vert v(t)\Vert_{L^2(\omega)}^2dt
\end{align}
where \(v\) denotes the null-control for \((\ref{eq3.3})\) at time \(T > A\). Consequently, there exists a constant \(M>0\) such that
\begin{align}
\langle\mathcal{E}(T)y_0 ,y_0 \rangle\leq M \Vert y_0\Vert^2.
\end{align} 
This shows that \(\mathcal{E}(T)\) is uniformly bounded from above in \(H\) as \(T\to\infty\), so that \(\mathcal{E}(T) \leq M\).

Since the cost functional (\ref{eq3.11}) is nonnegative by definition, we also have \(\langle\mathcal{E}(T)y_0, y_0\rangle \geq 0\). Moreover, if \(\langle\mathcal{E}(T)y_0, y_0\rangle = 0\), then it must be that \(y_T \equiv 0\) and \(v\equiv 0\) for all \(t\in[0,T]\) and almost every \(x \in \Omega\). By uniqueness,  \(y_0 \equiv 0\). Thus, \(\langle\mathcal{E}(T)y_0, y_0\rangle = 0\) if and only if \(y_0 \equiv 0\), showing that  \(\mathcal{E}(T)\)   is positive definite. 

Therefore, for all $T>A,$
\[
0 < \mathcal{E}(T) \leq M,
\]
as claimed.

{\bf \underline{Case (ii)}} :We now show that the operator \(\mathcal{E}\) is nondecreasing in the time horizon \(T\).  Intuitively, extending the integration interval \([0,T]\) in the cost functional can only increase the minimal cost. Let $0<T\leq T_1,$ and denote by  \(v_{T}\) and \(v_{T_1}\) the respective minimizers of  \(J_{T}^0\) and \(J_{T_1}^0,\) with corresponding states \(y_{T}\) and \(y_{T_1}.\) By definition (cf.\ref{eq3.19})
\[
\frac{1}{2}\langle \mathcal{E}(T)y_0, y_0 \rangle 
= \frac{N}{2}\int_{0}^{T}\|y_T(t)\|^2\,dt + \frac{1}{2}\int_{0}^{T}\|v_T(t)\|_{L^2(\omega)}^2\,dt + \frac{1}{2}\|y_T(T)\|^2.
\]
Since \(v_{T_1}\)  is admissible for the shorter horizon \([0,T]\), we have
\[
\frac{1}{2}\langle \mathcal{E}(T)y_0, y_0 \rangle 
\leq \frac{N}{2}\int_{0}^{T}\|y_{T_1}(t)\|^2\,dt + \frac{1}{2}\int_{0}^{T}\|v_{T_1}(t)\|_{L^2(\omega)}^2\,dt + \frac{1}{2}\|y_{T_1}(T)\|^2.
\]
Moreover, since \(T \leq T_1\) each integral and terminal term over  \([0,T]\) is bounded by its counterpart over \([0,T_1].\) Hence,
\[
\frac{N}{2}\int_{0}^{T}\|y_{T_1}(t)\|^2\,dt + \frac{1}{2}\int_{0}^{T}\|v_{T_1}(t)\|_{L^2(\omega)}^2\,dt + \frac{1}{2}\|y_{T_1}(T)\|^2
\leq \frac{N}{2}\int_{0}^{T_1}\|y_{T_1}(t)\|^2\,dt + \frac{1}{2}\int_{0}^{T_1}\|v_{T_1}(t)\|_{L^2(\omega)}^2\,dt + \frac{1}{2}\|y_{T_1}(T_1)\|^2.
\]
Combining these, we obtain
\[
\frac{1}{2}\langle \mathcal{E}(T)y_0, y_0 \rangle \leq \frac{1}{2}\langle \mathcal{E}(T_1)y_0, y_0 \rangle,
\]
for all $y_0\in H.$ Thus \(\mathcal{E}(T)\leq \mathcal{E}(T_1),\) i.e. $\mathcal{E}(T)$  is monotonically increasing in \(T\).

{\bf \underline{Case (iii)}} : Thanks to the monotonicity and uniform boundedness of \(\mathcal{E}(T),\) there exists a unique symmetric positive-definite operator \(\hat{E}\), such that
\[
\lim_{T\to\infty}\mathcal{E}(T)=\hat{E}.
\]
This limit \(\hat{E}\) satisfies the algebraic Riccati equation (\ref{eq3.14}). Moreover, if \((y_\infty, p_\infty)\) denotes the solution pair of (\ref{eq3.9}) taken over the infinite horizon \(T=\infty\), then the minimal cost is \(\langle\hat{E}y_0,y_0\rangle\).
 
 {\bf \underline{Case (iv)}} : We now demonstrate that the closed‑loop system  (\ref{eq3.15}) is globally asymptotically stable. Recall that the limiting operator $\hat{E}=\lim_{T\to\infty}\mathcal{E}(T)$  is symmetric and positive‑definite, and satisfies the algebraic Riccati equation \eqref{eq3.14}. The corresponding infinite‑horizon optimal pair \((y_\infty, p_\infty)\) yields the feedback control \[
v_{\infty}(x,t) = -B^*\hat{E}\,y_{\infty}(x,a,t).
\]
 
Define  the Lyapunov function
\[
V(y_{\infty}) = \langle\hat{E}y_{\infty}, y_{\infty}\rangle.
\]
Then,
\begin{align}
    \frac{d}{dt}\langle\hat{E}y_{\infty},y_{\infty}\rangle = 2\langle \hat{E}(y_{\infty})_t, y_{\infty}\rangle 
= 2\langle \hat{E}\mathcal{A}_m y_{\infty}, y_{\infty}\rangle - 2\langle \hat{E}BB^*\hat{E}y_{\infty}, y_{\infty}\rangle.
\end{align}
Using symmetry and the Riccati equation  (\ref{eq3.14}), one obtains
\begin{align}
    \frac{d}{dt}\langle\hat{E}y_{\infty},y_{\infty}\rangle = -\Bigl( N\Vert y_{\infty}(t)\Vert^2 + \Vert B^*\hat{E}y_{\infty}(t)\Vert^2_{L^2(\omega)} \Bigr)\leq 0.
\end{align}
with strict negativity unless $y_0=0.$ Hence $V$ is a proper Lyapunov function :
\begin{itemize}
    \item Positive definiteness : $V(y_{\infty})>0$ for $y_{\infty}\neq 0,$
    \item  Radial unboundedness :  $\Vert y_{\infty}\Vert\to\infty \quad\Longrightarrow\quad V(y_{\infty})\to\infty.$
\end{itemize}
By standard results (see, e.g., \cite[Theorem 12.2]{ref21}, \cite[Theorem 1.2.3]{ref61}),  these properties guarantee global asymptotic stability of the closed‑loop system.

Moreover, integrating $\partial_t V$ over $(0,+\infty)$  and using $\lim_{t\to\infty}y_{\infty}(t)=0$ shows

\begin{equation}\label{eq3.27}
\dfrac{1}{2}\langle\hat{E}y_0,y_0\rangle =\inf_{v\in L^2(\omega\times[0,T])} J^0 _{\infty}(v),
\end{equation}
where \(J^0_{\infty}\) 
  is the infinite‑horizon version of (\ref{eq3.11}). This confirms  \(\hat{E}\) is the unique positive‑definite solution to the algebraic Riccati equation.

\end{proof}
\begin{remark}
Controllability of the state equation is essential to ensure that the optimal pair $(y_T, v_T)$ remains uniformly bounded in $T$.  Moreover, the gain operator $B^*\mathcal{E}(T)$ converges to $B^*\hat{E}$ as $T\to\infty$. Hence, the infinite‑horizon controller $ v_\infty = -\,B^*\,\hat{E}\,y_\infty$
 is asymptotically stabilizing: it confines the state $y_\infty$ to a neighborhood of the equilibrium and maintains effective control over the entire infinite time horizon.

\end{remark}
We now build on the global asymptotic stability of \eqref{eq3.15} to show that the closed‑loop system is in fact exponentially stable. In finite dimensions, global asymptotic stability (see, e.g., \cite{ref61, ref60}) is closely tied to exponential stability: the eigenvalues of the stability matrix

$$
M = \mathcal{A}_m - B B^* \hat{E}
$$

all have strictly negative real parts; equivalently, $\Re(\sigma(M))\subset(-\infty,0)$. Moreover, by \cite[Theorem 13.1.1]{ref58}, the equilibrium $0$ is (Lyapunov) stable if and only if every eigenvalue of $M$ is either strictly negative or, if it has zero real part, is simple. This result follows from the existence of an appropriate Lyapunov function. Consequently, the closed‑loop evolution

\begin{align}
    \partial_t y_\infty(t) \;=\; M\,y_\infty(t), 
\quad y_\infty(0)=y_0,
\end{align}

is exponentially stable.

In infinite-dimensional settings, however, the operator $M$ may have more intricate spectral properties—both lower and upper stability indices can arise \cite{ref63}. In such contexts, the interplay between controllability and stabilizability is substantially more involved. The following results address conditions ensuring exponential decay in these infinite-dimensional systems.
\begin{lemma}\label{le3.8}
The null-controllable system (\ref{eq3.3}) is exponentially stabilizable.
\end{lemma}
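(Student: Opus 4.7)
The plan is to exhibit an explicit stabilizing feedback via the algebraic Riccati operator $\hat{E}$ constructed in Lemma \ref{le3.4}, and then upgrade the global asymptotic stability already proved there to the exponential decay demanded by Definition \ref{de3.3}. The bridge between asymptotic and exponential decay will be the Datko--Pazy theorem, which in a Hilbert space asserts that a $C_0$-semigroup is exponentially stable iff every orbit is square-integrable in time.

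First, I would set $K = B^{*}\hat{E}$ and let $(S(t))_{t\ge0}$ denote the closed-loop semigroup generated by $M := \mathcal{A}_m - BB^{*}\hat{E}$ on $H$. Its existence as a $C_0$-semigroup is inherited from the generation result for $\mathcal{A}_m$ together with the admissibility of $B$ established in Section \ref{s2} and the boundedness of $\hat{E}$ from item (i) of Lemma \ref{le3.4}; in particular $y_\infty(t) = S(t)y_0$ solves the closed-loop equation \eqref{eq3.15}. The point of selecting this particular $K$ is that it comes with a built-in Lyapunov structure inherited from the Riccati equation \eqref{eq3.14}.

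Second, I would exploit exactly the Lyapunov identity that appears in case (iv) of the proof of Lemma \ref{le3.4}, namely
\begin{equation*}
\frac{d}{dt}\langle \hat{E}\,y_\infty(t),\,y_\infty(t)\rangle \;=\; -N\|y_\infty(t)\|^{2} \;-\; \|B^{*}\hat{E}\,y_\infty(t)\|_{L^{2}(\omega)}^{2}.
\end{equation*}
Integrating from $0$ to any $T>0$, dropping the nonnegative second term, and using $0<\hat{E}\le M$ on the right-hand side, I obtain
\begin{equation*}
N\int_{0}^{T}\|S(t)y_0\|^{2}\,dt \;\le\; \langle \hat{E}y_0,\,y_0\rangle \;\le\; M\|y_0\|^{2},
\end{equation*}
and letting $T\to\infty$ yields $\int_{0}^{\infty}\|S(t)y_0\|^{2}\,dt \le (M/N)\|y_0\|^{2}$ for every $y_0\in H$. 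By Datko's theorem on Hilbert spaces, this square-integrability of all orbits is equivalent to the existence of constants $C,\nu>0$ such that $\|S(t)y_0\|\le C\|y_0\|e^{-\nu t}$ for every $y_0\in H$ and every $t\ge0$, which is precisely Definition \ref{de3.3} with the feedback $K=B^{*}\hat{E}$.

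The delicate step I anticipate is not the Datko argument itself, which is routine once the square-integrability inequality is in hand, but rather the verification that the perturbed operator $M=\mathcal{A}_m-BB^{*}\hat{E}$ genuinely generates a $C_0$-semigroup on $H$. Because $B$ is unbounded as a map into $H$ (it is only admissible, as noted after Theorem \ref{th1.4}), the perturbation $BB^{*}\hat{E}$ must be justified via the extended/Yosida framework introduced in Section \ref{s2}: one writes the closed-loop dynamics through the admissibility pairing $\mathcal{A}_{-1}+\mathcal{B}(C_\Lambda-B^{*}\hat{E})$ and invokes the standard perturbation theorem for admissible feedbacks (as already used implicitly in the identification $\mathcal{A}_\Lambda=\mathcal{A}_m$). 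Once that generation and the energy identity above are secured, exponential stabilizability of \eqref{eq3.3} follows, closing the loop between the null-controllability of Theorem \ref{th1.4} and the Phillips-type exponential stability that will feed into the turnpike analysis of Theorem \ref{th1.5}.
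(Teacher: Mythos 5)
Your proof is correct, and it reaches the conclusion by a route that is genuinely different from, and in one respect tighter than, the paper's. The paper also starts by generating the closed-loop semigroup for $M=\mathcal{A}_m-BB^{*}\hat{E}$ (via Phillips' perturbation theorem), but it then obtains integrability of the orbits, $\int_{0}^{\infty}\|y_\infty(t)\|\,dt<\infty$, by appealing directly to the null-controllability of \eqref{eq3.3}, and concludes exponential stability by citing a Datko-type result from the literature. You instead extract square-integrability, $\int_{0}^{\infty}\|S(t)y_0\|^{2}\,dt\le (M/N)\|y_0\|^{2}$, from the Riccati--Lyapunov dissipation identity of case (iv) of Lemma \ref{le3.4} together with the uniform bound $0<\hat{E}\le M$ of case (i), and then invoke Datko's theorem. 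The two arguments are not independent --- null-controllability is exactly what gives the bound $\hat{E}\le M$ in Lemma \ref{le3.4}(i), so controllability still underlies your estimate, just one step removed --- but your version makes the quantitative content explicit where the paper's passage from ``null-controllable'' to ``orbits are integrable'' is left implicit. You are also more careful than the paper on the one genuinely delicate point: since $B$ maps into $H_{-1}$ and is only an admissible control operator, $BB^{*}\hat{E}$ is not a bounded perturbation of $\mathcal{A}_m$ on $H$, so the generation of the closed-loop semigroup must be justified through the extrapolation/admissible-feedback framework of Section \ref{s2} rather than by a naive bounded-perturbation theorem; the paper invokes Phillips' theorem without flagging this, whereas you name the issue and the correct machinery to resolve it. Both proofs deliver the estimate of Definition \ref{de3.3} with the same feedback $K=B^{*}\hat{E}$.
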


\begin{proof}[of Lemma \ref{le3.8}]
Since $\mathcal{A}_m$ generates an exponentially stable $C_0$-semigroup on $H$ and $\hat{E}$ is bounded on $H$, Phillips’ theorem (see \cite[Theorem 1.5, p. 188]{ref37}) ensures that

$$
M \;=\;\mathcal{A}_m \;-\; B\,B^*\,\hat{E}
$$

also generates a $C_0$-semigroup with domain $D(M)=D(\mathcal{A}_m)$. Combined with the null‑controllability of system \eqref{eq3.3}, this yields

\begin{align}
    \int_{0}^{\infty}\|y_\infty(t)\|\,dt < \infty 
\quad\text{for all }y_0\in H.
\end{align}

Moreover, by \cite[Theorem 3.3(i), p.222]{ref37} (or \cite[Corollary 6.1.14]{ref48}), the semigroup $(S(t))_{t\ge0}$ generated by $M$ is exponentially stable; that is, there exist constants $C>0$ and $\nu>0$ (depending on $\mathcal{A}_m$, $\omega$, and $\hat{E}$) such that

\begin{align}
    \|S(t)\|\le C\,e^{-\nu t},\quad t\ge0.
\end{align}

Hence, the closed‑loop system \eqref{eq3.3} is exponentially stabilizable.

\end{proof}

From \cite[Theorem 4.4, p. 241]{ref37}, Lemma \ref{le3.8} implies that equation (\ref{eq3.14}) has at least one non-negative solution \(\hat{E}\in H\).

\begin{remark}
 The exponential stability established in Lemma \ref{le3.8} is, in fact, a strictly stronger result than the condition given in \cite[Theorem 3.1(ii), p. 222]{ref37}.
\end{remark}

In summary, we have shown that the spectral bound of $M$ satisfies

$$
\sup\{\Re(\nu) : \nu\in\sigma(M)\} < 0,
$$

so that $M$ is boundedly invertible (see, e.g., \cite[Proposition 3.1]{ref63}, \cite[Theorem 3.1, p. 222]{ref37}). Under the resulting feedback control, the zero state becomes the equilibrium of the dynamic system \eqref{eq3.3}.

The turnpike property describes how, over long time horizons, the optimal state, control, and adjoint state stay close to an optimal steady state for the majority of the interval. Our analysis of this phenomenon hinges on diagonalizing the Hamiltonian operator associated with the system, with stability playing a central role. In particular, the extremal triple $(\bar{y}, \bar{v}, \bar{p})$ solving the steady optimal control problem $J_2$ under constraint \eqref{eq3.4} is precisely the equilibrium of the Hamiltonian system \eqref{eq3.9}.

To analyze exponential decay toward the steady state, define the deviation variables
\begin{align}\label{ee5.29}
    \tilde{y} = y_T - \bar{y},\quad \tilde{p} = p_T - \bar{p},\quad \text{for all } t\in [0,T].
\end{align}
By combining systems \eqref{eq3.9} and \eqref{eq3.10}, these deviations satisfy the Hamiltonian system 
\begin{equation}\label{eq3.32}
\frac{\partial}{\partial t}\begin{bmatrix}\tilde{y} \\ \tilde{p}\end{bmatrix} = Ham\, \begin{bmatrix}\tilde{y} \\ \tilde{p}\end{bmatrix},
\end{equation}
where the Hamiltonian operator \(Ham\colon D(\mathcal{A}_m)\times D(\mathcal{A}_m^*)\to H\times H\) is defined by
\[
Ham = \begin{bmatrix} \mathcal{A}_m & -BB^* \\ -I.N & -\mathcal{A}_m^* \end{bmatrix}.
\]
In finite dimensions, absence of imaginary‑axis eigenvalues for $Ham$ is equivalent to the plant being both stabilizable and detectable (see, e.g., \cite{ref46}). Here, stabilizability has already been established for system \eqref{eq3.3}. Detectability (the dual concept of stabilizability) means there exists an operator $\mathcal{L}$ such that $\mathcal{A}_m + \mathcal{L}I$ generates an exponentially stable semigroup (e.g., \cite{ref64,ref52,ref32}), with observation operator $C=I$.  Exponential stabilizability of system \eqref{eq3.3} implies its exponential detectability  \cite{ref38}. Moreover, observability of the state system entails null‑controllability of its adjoint, which guarantees the existence of an operator $\mathcal{L}$ ensuring this exponential detectability. It should be emphasized that observability (and thus detectability) together with stabilizability are two key elements in the proof of Theorem \ref{th1.4}.

We next state a Lemma that is pivotal for establishing exponential decay.

  \begin{lemma}\label{le3.10}
\begin{enumerate}
  \item[(1)] There exists a unique symmetric positive definite solution \(\hat{E}\) to the algebraic Riccati equation
  \begin{equation}\label{eq3.33}
  \hat{E}\mathcal{A}_m+\mathcal{A}_m^*\hat{E}-\hat{E}BB^*\hat{E}+I.N=0,
  \end{equation}
  such that the operator \(\mathcal{A}_m-BB^*\hat{E}\) is stable ; that is, it generates a semigroup that is exponentially stable.
  
  \item[(2)] Define 
  \begin{align}
      \Lambda = \begin{bmatrix}
  I & S\\[1mm]
  \hat{E} & \hat{E}S+I
  \end{bmatrix},
  \end{align}
  where \(S\) is the solution to the Lyapunov equation
  \[
  S(\mathcal{A}_m-BB^*\hat{E}) + (\mathcal{A}_m-BB^*\hat{E})S = BB^*.
  \]
  Then, \(\Lambda\) is invertible and
  \begin{align}
    \Lambda\, Ham\, \Lambda^{-1} = \begin{bmatrix}
  \mathcal{A}_m-BB^*\hat{E} & 0\\[1mm]
  0 & -(\mathcal{A}_m^*-BB^*\hat{E})
  \end{bmatrix}.  
  \end{align}
  Consequently, the operator \(Ham\) is boundedly invertible and its diagonal blocks are generators of semigroups that are exponentially stable.
\end{enumerate}
\end{lemma}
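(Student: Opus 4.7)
The plan is to treat the two assertions in sequence, leveraging the framework set up in Lemmas \ref{le3.4} and \ref{le3.8}. For assertion (1), I would take as the candidate the operator $\hat{E}=\lim_{T\to\infty}\mathcal{E}(T)$ already produced in Lemma \ref{le3.4}(iii): it is symmetric and positive definite as the monotone strong limit of the $\mathcal{E}(T)$, and passing to the limit in the differential Riccati equation \eqref{eq3.13} yields the algebraic Riccati equation \eqref{eq3.33}. Lemma \ref{le3.8} has already shown, via Phillips' perturbation theorem combined with the null-controllability of Theorem \ref{th1.4}, that $M:=\mathcal{A}_m-BB^*\hat{E}$ generates an exponentially stable $C_0$-semigroup on $H$. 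Uniqueness of $\hat{E}$ among symmetric non-negative solutions for which $M$ is exponentially stable would follow from the standard stabilizability--detectability dichotomy (see \cite[Theorem 4.4, p.~241]{ref37}), invoking the observability of the identity observation $C=I$ that was already exploited in the preceding results.

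For assertion (2), I would first recognize $\Lambda$ as the product of two elementary block triangular operators,
\begin{align*}
\Lambda \;=\; \begin{bmatrix} I & 0 \\ \hat{E} & I \end{bmatrix}\begin{bmatrix} I & S \\ 0 & I \end{bmatrix},
\end{align*}
each of which is manifestly bounded and boundedly invertible on $H\times H$. This yields both the invertibility of $\Lambda$ and an explicit formula for its inverse. Next, the Lyapunov equation for $S$ is solvable precisely because $M$ generates an exponentially stable semigroup: $S$ is then given as the convergent integral $S=\int_0^\infty e^{sM}BB^*e^{sM^*}\,ds$, producing a bounded, self-adjoint, non-negative operator.

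The similarity would then be verified block by block, using the two factors in succession. Conjugation with the lower-triangular factor triangularizes the Hamiltonian: a direct computation shows its $(2,1)$ block equals $\hat{E}\mathcal{A}_m+\mathcal{A}_m^*\hat{E}-\hat{E}BB^*\hat{E}+N\,I$, which vanishes by the algebraic Riccati equation \eqref{eq3.33}. The resulting upper-triangular operator has diagonal blocks $M$ and $-M^*$ and a single off-diagonal entry $-BB^*$. Conjugation with the upper-triangular factor then turns that entry into $MS+SM^*-BB^*$, which vanishes by the Lyapunov equation satisfied by $S$. The outcome is the claimed block-diagonal operator with blocks $M$ and $-M^*$.

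Both diagonal blocks generate exponentially stable $C_0$-semigroups: $M$ by construction, and $-M^*$ on the dual space because the adjoint semigroup $(e^{tM})^*$ is exponentially stable with the same decay rate. Since the spectra of $M$ and $-M^*$ lie in $\{\operatorname{Re}\lambda<0\}$ and $\{\operatorname{Re}\lambda>0\}$ respectively, we have $0\in\rho(Ham)$ and $Ham$ is boundedly invertible. The main obstacle I expect is the careful handling of operator domains: the similarity must be interpreted on $D(\mathcal{A}_m)\times D(\mathcal{A}_m^*)$, so I will need to verify that $\hat{E}$ and $S$ enjoy enough smoothing to respect these domains --- in particular, that $\hat{E}:D(\mathcal{A}_m)\to D(\mathcal{A}_m^*)$, a mapping property that can be extracted from the Riccati equation itself.
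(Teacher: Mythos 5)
Your proposal follows the same route as the paper's proof — the candidate $\hat{E}=\lim_{T\to\infty}\mathcal{E}(T)$ furnished by Lemmas \ref{le3.4} and \ref{le3.8}, the Lyapunov equation for $S$, and the dichotomy transformation $\Lambda$ — and in fact supplies the details the paper compresses into ``one checks directly,'' notably the factorization $\Lambda=\bigl[\begin{smallmatrix} I&0\\ \hat{E}& I\end{smallmatrix}\bigr]\bigl[\begin{smallmatrix} I&S\\ 0& I\end{smallmatrix}\bigr]$, which immediately yields invertibility and the explicit inverse, together with the two-step block computation using the Riccati and Lyapunov identities. One small point to reconcile: with $M=\mathcal{A}_m-BB^*\hat{E}$ exponentially stable, the bounded solution of $MS+SM^*=BB^*$ is $S=-\int_0^\infty e^{sM}BB^*e^{sM^*}\,ds$ (negative semidefinite), so your integral formula carries the opposite sign; this is harmless for the invertibility of $\Lambda$ and for the block-diagonalization, but the sign conventions — and the order of conjugation, since your computation (conjugating by the lower-triangular factor first) actually establishes $\Lambda^{-1}\,Ham\,\Lambda=\operatorname{diag}\bigl(M,-M^*\bigr)$ rather than $\Lambda\,Ham\,\Lambda^{-1}$ — should be stated consistently with the lemma.
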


\begin{remark}
The Hamiltonian operator $Ham$ has no purely imaginary eigenvalues; it is thus hyperbolic, meaning all its spectrum lies off the imaginary axis. Consequently, $Ham$ admits a block‑diagonal decomposition (or spectral splitting) into a “stable” part, whose spectrum has strictly negative real parts, and an “unstable” part, whose spectrum has strictly positive real parts. In infinite dimensions, this corresponds to the generator of an exponentially dichotomous semigroup, a stronger notion than mere hyperbolicity in finite dimensions.
\end{remark}

\begin{proof}[of Lemma \ref{le3.10}]
By Lemmas \ref{le3.4} and \ref{le3.8}, the limit operator $\hat{E} = \lim_{t\to\infty}\mathcal{E}(t)$ is positive definite and defines a Lyapunov function. The semigroup $(S(t))_{t\ge0}$ generated by $M := \mathcal{A}_m - B B^* \hat{E}$ is exponentially stable, and so is the semigroup generated by its adjoint $M^*$.

Consider now the operator Riccati equation for the steady gain $S$:

$$
S\,M + M^*\,S = B B^*.
$$

One shows that the block operator

$$
      \Lambda = \begin{bmatrix}
  I & S\\[1mm]
  \hat{E} & \hat{E}S+I
  \end{bmatrix},
$$

is invertible, with inverse

\[
\Lambda^{-1} = \begin{bmatrix}
I+SE & -S\\[1mm]
-E & I
\end{bmatrix}.
\]

Using the algebraic Riccati equation \eqref{eq3.33}, one checks directly \eqref{eq3.33}, which implies that the Hamiltonian operator
$Ham$ is boundedly invertible on $D(\mathcal{A}_m)\times D(\mathcal{A}_m^*)$.

\end{proof}

\begin{remark}
 The semigroups
 $\{e^{t(\mathcal{A}_m - B B^* \hat{E})}\}_{t\ge0}$ and
 $\{e^{-t(\mathcal{A}_m - B B^* \hat{E})^*}\}_{t\ge0}$ on $H$ are exponentially stable, with generators $\mathcal{A}_m - B B^* \hat{E}$ and $(\mathcal{A}_m - B B^* \hat{E})^*$, respectively. Equivalently, the system \eqref{eq3.3} is exponentially stabilizable and exponentially detectable \cite{ref32}.
\end{remark}
 
Next, we state the following Lemma.

\begin{lemma}\label{le3.12}
There exists a constant $C>0$, independent of the time horizon $T$, such that for the optimal pair $(y_T,p_T)$ one has
\[
\Vert y_T(T)\Vert \leq C,\qquad \Vert p_T(0)\Vert \leq C.
\]
\end{lemma}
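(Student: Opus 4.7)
The plan is to exploit the Hamiltonian dichotomy established in Lemma \ref{le3.10}. Introducing the deviations $\tilde{y}=y_T-\bar{y}$ and $\tilde{p}=p_T-\bar{p}$ of \eqref{ee5.29}, they satisfy the homogeneous Hamiltonian system \eqref{eq3.32} with the boundary data $\tilde{y}(0)=y_0-\bar{y}$ and $\tilde{p}(T)=p_T(T)-\bar{p}$. I apply the block change of variables $(q,r)^\top=\Lambda(\tilde{y},\tilde{p})^\top$ from Lemma \ref{le3.10}(2) and set
\[
M:=\mathcal{A}_m-BB^*\hat{E},\qquad \tilde{M}:=\mathcal{A}_m^*-BB^*\hat{E}.
\]
In these variables the system decouples into $\partial_t q=Mq$ and $\partial_t r=-\tilde{M}r$. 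The $C_0$-semigroup generated by $M$ is exponentially stable by Lemma \ref{le3.8}, and a parallel Phillips-type argument applied to the adjoint problem ensures the same for the one generated by $\tilde{M}$. Hence there exist constants $C_0,\nu>0$, independent of $T$, such that
\[
\|q(t)\|\le C_0\,e^{-\nu t}\|q(0)\|,\qquad \|r(t)\|\le C_0\,e^{-\nu(T-t)}\|r(T)\|,\qquad t\in[0,T].
\]

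Translating the boundary data back through the explicit expression of $\Lambda^{-1}$ from the proof of Lemma \ref{le3.10} yields the two-point linear system
\[
(I+S\hat{E})\,q(0)-S\,r(0)=y_0-\bar{y},\qquad -\hat{E}\,q(T)+r(T)=p_T(T)-\bar{p},
\]
and substituting the semigroup representations $q(T)=e^{TM}q(0)$ and $r(0)=e^{T\tilde{M}}r(T)$ recasts it as a $2\times 2$ system for the unknowns $(q(0),r(T))$ whose diagonal blocks are the boundedly invertible operators $I+S\hat{E}$ and $I$, while the off-diagonal blocks carry an exponentially small factor of order $e^{-\nu T}$. A Neumann series argument, valid for $T\ge T_0$ large enough, then inverts the system with the estimate
\[
\|q(0)\|+\|r(T)\|\le C\,\bigl(\|y_0-\bar{y}\|+\|p_T(T)-\bar{p}\|\bigr),
\]
in which $C$ depends only on $\hat{E}$, $S$, $C_0$, $\nu$ and is independent of $T$. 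The residual range $T\in(A,T_0]$ is covered by a direct compactness/continuity argument on the optimality map in $T$.

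Using $\tilde{y}=(I+S\hat{E})q-Sr$ and $\tilde{p}=-\hat{E}q+r$ together with $\|q(T)\|\le C_0 e^{-\nu T}\|q(0)\|$ and $\|r(0)\|\le C_0 e^{-\nu T}\|r(T)\|$ gives the uniform bounds $\|\tilde{y}(T)\|\le C$ and $\|\tilde{p}(0)\|\le C$. The triangle inequality then produces
\[
\|y_T(T)\|\le\|\bar{y}\|+\|\tilde{y}(T)\|\le C,\qquad \|p_T(0)\|\le\|\bar{p}\|+\|\tilde{p}(0)\|\le C.
\]

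The hard part is the uniform (in $T$) invertibility of the boundary-value system. Eliminating $r(T)$ reduces the problem to solving an equation of the form $(I-\Phi_T)\xi=\eta$, where $\Phi_T$ is a composition of $\hat{E}\,e^{TM}(I+S\hat{E})^{-1}S\,e^{T\tilde{M}}$ and carries the double-exponential decay $\|\Phi_T\|\le C\,e^{-2\nu T}$. Establishing this estimate and the Neumann series convergence requires keeping careful track of the norms of $\hat{E}$, $S$, $(I+S\hat{E})^{-1}$ and of the semigroup constants from Lemma \ref{le3.8}; the positivity and symmetry properties of $\hat{E}$ and $S$ together with the Lyapunov equation for $S$ ensure that $I+S\hat{E}$ is an isomorphism on $H$. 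Once these estimates are in place, the remainder of the argument is linear algebra uniform in $T$.
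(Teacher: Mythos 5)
Your approach is genuinely different from the paper's, but as written it has a gap that undercuts the conclusion. The paper proves the first bound by invoking the a priori energy estimate \eqref{eq3.8}, $\Vert y_T(T)\Vert \le C'(\Vert v_T\Vert_{L^2(\omega\times[0,T])}+\Vert y_0\Vert)$, and the second by the observability inequality for the adjoint system; no dichotomy is needed. You instead run the Hamiltonian decoupling of Lemma \ref{le3.10} and solve a two-point boundary-value problem by a Neumann series. The problem is that your terminal datum is not a datum: you impose $-\hat{E}q(T)+r(T)=p_T(T)-\bar{p}$ and end up with
\[
\|q(0)\|+\|r(T)\|\le C\bigl(\|y_0-\bar{y}\|+\|p_T(T)-\bar{p}\|\bigr),
\]
but $p_T(T)$ is an unknown of the optimality system \eqref{eq3.9}. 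By the transversality condition of Lemma \ref{le3.4}, $p_T(T)=y_T(T)$, so bounding $\|p_T(T)-\bar{p}\|$ uniformly in $T$ is essentially \emph{equivalent} to the first assertion of the lemma. Your final step, deducing $\|\tilde{y}(T)\|\le C$ from $\tilde{y}(T)=(I+S\hat{E})q(T)-Sr(T)$, fails because $\|r(T)\|$ is controlled only up to the uncontrolled quantity $\|p_T(T)-\bar{p}\|$. The argument is therefore circular as it stands.

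The fix within your framework is to use the transversality condition as the terminal boundary condition: $\tilde{p}(T)=\tilde{y}(T)+(\bar{y}-\bar{p})$, so that the two-point problem for $(q(0),r(T))$ carries only the $T$-independent data $y_0-\bar{y}$ and $\bar{y}-\bar{p}$. One must then check that the resulting coupled boundary operator (which now mixes $q(T)$ and $r(T)$ through $\hat{E}$ and $S$ rather than being block-triangular) is still uniformly invertible for large $T$; the off-diagonal terms are again $O(e^{-\nu T})$, so a Neumann series should close it, but this verification is not in your write-up. Even then, the paper's route is considerably shorter and avoids the compactness argument you need for the residual range $T\in(A,T_0]$: the energy estimate \eqref{eq3.8} plus a comparison of the optimal cost with an admissible reference control bounds $\|y_T(T)\|$, and adjoint observability (a consequence of the null-controllability already established) bounds $\|p_T(0)\|$ directly.
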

\begin{proof}[of Lemma \ref{le3.12}]
For the first estimate, \(\|y_T(T)\|\le C\), we directly invoke the a priori bound given by \eqref{eq3.8}. 

For the second estimate, \(\|p_T(0)\|\le C\), we apply the observability inequality for the adjoint system (see \cite[Proposition 3.1, p. 39]{ref27}), which follows from null‑controllability and ensures the right‑hand bound.
\end{proof}


Using Lemmas \ref{le3.10} and \ref{le3.12}, we are now in a position to prove Theorem \ref{th1.5}.
\begin{proof}[of Theorem \ref{th1.5}]
Consider equation \eqref{eq3.32} and employ the same dichotomy transformation as in the proof of Lemma \ref{le3.10}:
\begin{equation}\label{eq3.37}
\begin{pmatrix}
z(t) \\[1mm]
q(t)
\end{pmatrix}
=\Lambda \begin{pmatrix}
\tilde{y}(t) \\[1mm]
\tilde{p}(t)
\end{pmatrix},\qquad t\in (0,T)\quad \text{a.e.},
\end{equation}
where, as established earlier, \(\Lambda\) is an invertible matrix. Through this transformation, one obtains a decoupled evolution system. After some calculations, this yields the decoupled system
\begin{equation}\label{eq3.38}
\frac{d}{dt}\begin{pmatrix}
z(t) \\[1mm]
q(t)
\end{pmatrix}
=\begin{pmatrix}
\mathcal{A}_m-BB^*\hat{E} & 0\\[1mm]
0 & -\bigl(\mathcal{A}_m-BB^*\hat{E}\bigr)^*
\end{pmatrix}\begin{pmatrix}
z(t) \\[1mm]
q(t)
\end{pmatrix},\qquad t\in (0,T)\quad \text{a.e.}
\end{equation}
Thus, it follows that
\begin{equation}\label{eq3.39}
z(t)=S(t)z(0),\qquad q(t)=S(T-t)^*q(T),\qquad t\in (0,T)\quad \text{a.e.},
\end{equation}
where \(\{S(t)\}_{t\ge0}\) and \(\{S(t)^*\}_{t\ge0}\) are the \(C_0\) semigroups, exponentially stable, generated respectively by \(\mathcal{A}_m-BB^*\hat{E}\) and by \(-\bigl(\mathcal{A}_m-BB^*\hat{E}\bigr)^*\); note that \(\mathcal{A}_m-BB^*\hat{E}\) and its adjoint possess the same analytic and topological properties in the Hilbert space \cite{ref58}. By inverting transformation \eqref{eq3.37}, one obtains
\[
z(0)=\tilde{y}_T(0)+S\tilde{p}_T(0),\qquad q(T)=E\tilde{y}_T(T)+(ES+I)\tilde{p}_T(T).
\]
To complete the proof of Theorem \ref{th1.5}, it suffices to show that the norms \(\Vert\tilde{p}_T(0)\Vert\) and \(\Vert\tilde{y}_T(T)\Vert\) are uniformly bounded, i.e.,
\begin{align}
    \Vert \tilde{p}_T(0)\Vert\leq C \quad \text{and} \quad \Vert\tilde{y}_T(T)\Vert\leq C,
\end{align}
with \(C\) independent of \(T\); this follows from Lemma \ref{le3.12}. Consequently, we deduce that there exist positive constants \(C_1\) and \(C_2\) such that
\begin{align}
    \Vert z(0)\Vert\leq C_1\Vert \tilde{y}_T(0)\Vert,\quad  \Vert q(T)\Vert\leq C_2\Vert \tilde{p}_T(T)\Vert.
\end{align}
Therefore, there exist constants \(C, C'>0\) and \(\lambda>0\) such that
\begin{equation}\label{eq3.43}
\Vert z(t)\Vert\leq C\, \Vert \tilde{y}_T(0)\Vert e^{-\nu t},\quad  \Vert q(t)\Vert\leq C'\, \Vert \tilde{p}_T(T)\Vert e^{-\nu(T-t)},\qquad t\in (0,T)\quad \text{a.e.}
\end{equation}
Using \eqref{eq3.37} in conjunction with \eqref{eq3.43}, we deduce that
\begin{equation}\label{eq3.44}
\Vert y_T(t)-\bar{y}\Vert \leq K_1\, \Vert y_0-\bar{y}\Vert e^{-\nu t},\quad\text{and}\quad \Vert p_T(t)-\bar{p}\Vert \leq K_2\, \Vert p_T(T)-\bar{p}\Vert e^{-\nu (T-t)},\qquad t\in (0,T)\quad \text{a.e.},
\end{equation}
which, in turn, implies that there exists a constant \(K>0\) such that
\begin{equation}\label{eq3.45}
\Vert y_T(t)-\bar{y}\Vert + \Vert v_T(t)-\bar{v}\Vert_{L^2(\omega)} \leq K\left( \Vert y_0-\bar{y}\Vert\, e^{-\nu t} + \Vert p_T(T)-\bar{p}\Vert\, e^{-\nu (T-t)} \right),\qquad t\in (0,T)\quad \text{a.e.}
\end{equation}
\end{proof}

\begin{remark}
 The final estimate \eqref{eq3.45} quantifies the energy the dynamic system must expend to stay near the turnpike $\bar{y}$ while tracking the running target $y_d$. In summary, the turnpike property is a powerful tool: qualitatively, it describes the tendency of optimal trajectories to spend most of their time near the steady state ; quantitatively, it provides explicit bounds on the “energy” (or cost) required to maintain this proximity over long horizons.
\end{remark}
\section{Integral Turnpike Property}\label{s6}
 In this section, we study the behavior of the optimal trajectories $(y_T,v_T)$ for large $T$, in the regime $T\le A$.  Given the equilibrium $(\bar y,\bar v)$, we ask whether $(y_T,v_T)$ remains in a neighborhood of $(\bar y,\bar v)$ over the entire time interval $[0,T]$ when $T\le A$. 
\subsection{Dissipativity}
  We establish a turnpike property by employing the concept of dissipativity introduced by Willems in \cite{ref75}. The following definitions summarize the key notions required for this analysis. Recall that
$$
 \mathcal{K} = \bigl\{\alpha:[0,\infty)\to[0,\infty)\,\bigm|\,\alpha\text{ is continuous, strictly increasing, and }\alpha(0)=0\bigr\}.
$$
\begin{definition}[Strict dissipation inequality]\label{d6.1}
Let \((\bar y,\bar v)\) be an optimal stationary point of \eqref{eq3.5}, and define the supply rate
\[
w(y_s,v)
=\bigl\|y_s - y_d\bigr\|^2
\;+\;\bigl\Vert y_s(T)\bigr\Vert^2
\;-\;\bigl\|\bar y - y_d\bigr\|^2.
\]
We say that \eqref{eq3.1} is \emph{strictly dissipative} at \((\bar y,\bar v)\) with respect to \(w\) if there exist
\begin{itemize}
  \item a storage function \(S\colon H\to\mathbb{R}\) that is locally bounded and bounded below,
  \item and a function \(\alpha\in\mathcal{K}\),
\end{itemize}
such that for every \(T\in(0,A)\) and every \(\tau\in(0,T)\), any solution \((y_s,v_s)\) of \eqref{eq3.3} with \(v_s\equiv\bar v\) satisfies
\begin{align}
    S\bigl(y_s(\tau)\bigr)\;-\;S\bigl(y_s(0)\bigr)
\;\le\;
\int_{0}^{\tau} w\bigl(y_s(t),\,v_s(t)\bigr)\,dt
\;-\;
\int_{0}^{\tau} \alpha\bigl(\|y_s(t)-\bar y\|\bigr)\,dt.
\end{align}
\end{definition}
\begin{proposition}\label{p6.2}
System \eqref{eq3.3} is strictly dissipative in the sense of Definition \ref{d6.1}.
\end{proposition}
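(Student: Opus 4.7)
The plan is to construct a quadratic storage function, corrected by a linear term built from the steady adjoint $\bar p$, and verify the strict dissipation inequality by direct computation.

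First, I would exploit the standing assumption $R<1$ from Remark \ref{re1.2}, which implies that $\mathcal{A}_m$ generates an exponentially stable $C_0$-semigroup on $H$. Classical Lyapunov theory for such semigroups (Datko's theorem, as in \cite{ref37}) then yields a bounded, self-adjoint, strictly positive operator $P\in\mathcal{L}(H)$ satisfying
\[
2\langle P z,\mathcal{A}_m z\rangle = -\|z\|^2, \qquad z\in D(\mathcal{A}_m),
\]
together with $P\ge cI$ for some $c>0$. Fixing a small parameter $\varepsilon>0$, I set
\[
S(y) \;=\; \tfrac{\varepsilon}{2}\langle P(y-\bar{y}),\,y-\bar{y}\rangle \;-\; \tfrac{2}{N}\langle \bar{p},\,y-\bar{y}\rangle \;+\; C_0,
\]
where the constant $C_0$ is chosen by Young's inequality so that the linear term is absorbed into a fraction of the quadratic term; this guarantees that $S$ is locally bounded and bounded below on $H$.

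Second, let $(y_s,\bar v)$ be a solution of \eqref{eq3.3} corresponding to $v_s\equiv\bar{v}$ and set $z(t):=y_s(t)-\bar{y}$. The stationary relation $-\mathcal{A}_m\bar{y}=B\bar{v}$ from \eqref{eq3.10} yields $\partial_t z = \mathcal{A}_m z$. Differentiating $S$ along the trajectory then gives
\[
\frac{d}{dt}S(y_s(t)) \;=\; \varepsilon\langle P z,\mathcal{A}_m z\rangle - \tfrac{2}{N}\langle \bar{p},\mathcal{A}_m z\rangle \;=\; -\tfrac{\varepsilon}{2}\|z\|^2 - \tfrac{2}{N}\langle \mathcal{A}_m^{*}\bar{p},z\rangle,
\]
and substituting the steady adjoint identity $-\mathcal{A}_m^{*}\bar{p}=N(\bar{y}-y_d)$ reduces the second term to $2\langle \bar{y}-y_d,z\rangle$. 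Expanding the supply rate around $\bar y$,
\[
w(y_s,\bar{v}) \;=\; \|z\|^2 + 2\langle z,\bar{y}-y_d\rangle + \|y_s(T)\|^2,
\]
and subtracting the two identities shows that the troublesome linear cross term $2\langle z,\bar y-y_d\rangle$ cancels exactly, leaving
\[
w(y_s,\bar{v}) - \frac{d}{dt}S(y_s(t)) \;=\; \bigl(1 + \tfrac{\varepsilon}{2}\bigr)\|z\|^2 + \|y_s(T)\|^2.
\]
Setting $\alpha(s):=(1+\varepsilon/2)\,s^2\in\mathcal{K}$ and integrating on $[0,\tau]$, the non-negative term $\tau\|y_s(T)\|^2$ only strengthens the resulting estimate, so that the strict dissipation inequality of Definition \ref{d6.1} follows.

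The main obstacle I anticipate is the rigorous justification of the Lyapunov identity at the level of the unbounded generator: the relation holds only on $D(\mathcal{A}_m)$, and a priori the mild trajectory $y_s(t)$ need not remain in that domain. This is handled by a standard Yosida-approximation argument: perform the computation with the bounded approximants $\mathcal{A}_m^\lambda$, for which everything is classical, then pass to the limit using the boundedness of $P$, the strong continuity of $(\mathbb{T}(t))_{t\ge 0}$, and a density argument on the initial data. A secondary but essential algebraic point is the choice of the coefficient $-2/N$ in the linear part of $S$: it is uniquely dictated by the factor $N$ appearing in the steady adjoint equation, and any other choice would leave an uncontrolled residual linear term that cannot be absorbed by $\alpha(\|z\|)$.
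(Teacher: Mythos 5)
Your proposal is correct in its essential mechanism and, at its core, coincides with the paper's argument: both proofs hinge on pairing the deviation $z=y_s-\bar y$ (which satisfies $\partial_t z=\mathcal{A}_m z$ when $v_s\equiv\bar v$, by \eqref{eq3.4}) with the steady adjoint $\bar p$, invoking the stationary adjoint identity $-\mathcal{A}_m^{*}\bar p=N(\bar y-y_d)$ from \eqref{eq3.10}, and completing the square in $\|y_s-y_d\|^2$ so that the linear cross term cancels. The paper does this in integral form — multiplying the state equation by $\bar p$, integrating over $[0,\tau]$, and applying the polarization identity — and takes the purely \emph{linear} storage function $S(y)=\langle y,\bar p\rangle$ with $\alpha(s)=\tfrac12 s^2$; you do the same computation in differential form and obtain $\alpha(s)=(1+\varepsilon/2)s^2$. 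Your bookkeeping of constants is in fact cleaner than the paper's (the paper's final identity carries factors of $\tfrac12$ that do not literally match the supply rate of Definition \ref{d6.1}). The genuinely new ingredient on your side is the quadratic Lyapunov correction $\tfrac{\varepsilon}{2}\langle Pz,z\rangle$: it is not needed for the dissipation inequality itself (setting $\varepsilon=0$ reproduces the paper's proof), and is introduced solely to make $S$ bounded below — a requirement of Definition \ref{d6.1} that the paper's linear storage function silently violates. Your observation that the coefficient $-2/N$ is forced by the adjoint equation is correct, as is your treatment of the nonnegative term $\|y_s(T)\|^2$ and the Yosida-approximation remark.

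The one step that does not hold up is the coercivity claim $P\ge cI$. Datko/Lyapunov theory gives $P=\int_0^\infty \mathcal{T}_t^{*}\mathcal{T}_t\,dt$, bounded, self-adjoint and injective, with $\langle Py_0,y_0\rangle=\int_0^\infty\|\mathcal{T}_t y_0\|^2\,dt$; but for the age-structured semigroup there is no uniform lower bound $c\|y_0\|^2$. Initial data concentrated on ages in $(A-\delta,A)$ are transported out of the age window in time $O(\delta)$ (and $\pi(a)\to 0$ as $a\to A$ since $\int_0^A\mu=+\infty$), so that $\langle Py_0,y_0\rangle/\|y_0\|^2$ can be made arbitrarily small; hence $P$ is strictly positive but not coercive. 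Without $P\ge cI$, Young's inequality cannot absorb the linear term $-\tfrac{2}{N}\langle\bar p,y-\bar y\rangle$ into the quadratic part, and your $S$ fails to be bounded below for exactly the same reason the paper's does. The fix is either to accept the storage function only on the bounded set of trajectories relevant to the turnpike estimate (which is all that is used downstream), or to replace $\langle Pz,z\rangle$ by $\|z\|^2$ at the price of an extra error term that must then be dominated — which your cancellation no longer provides. Note finally that your route imports the exponential stability of $\mathcal{T}$ (hence the hypothesis $R<1$ of Remark \ref{re1.2}) where the paper's argument for this particular proposition is purely algebraic and needs no stability at all.
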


\begin{proof}[of Proposition \ref{p6.2}]
Multiply the first equation in \eqref{eq3.9} by \(\bar p\) and integrate by parts over \([0,\tau]\).  This yields
\begin{align}
\langle y_0,\bar p\rangle \;-\;\langle y(\tau),\bar p\rangle
\;-\;\int_0^{\tau}\langle y(t),\,\bar y - y_d\rangle\,dt
&=\int_0^{\tau}\langle v(t),\,\bar p\rangle\,dt
=\int_0^{\tau}\langle \bar v,\,\bar p\rangle\,dt,\\
\langle y_0,\bar p\rangle \;-\;\langle y(\tau),\bar p\rangle
&=\int_0^{\tau}\langle y(t)-\bar y,\,\bar y - y_d\rangle\,dt.
\end{align}
Since
\[
\langle y-\bar y,\,\bar y - y_d\rangle
=\tfrac12\bigl(\|y-y_d\|^2 - \|\bar y - y_d\|^2 - \|y-\bar y\|^2\bigr),
\]
we obtain
\begin{align}
\langle y_0,\bar p\rangle - \langle y(\tau),\bar p\rangle
+ \tfrac12 \int_0^{\tau}\|y(t)-\bar y\|^2\,dt
&= \int_0^{\tau}\tfrac12\bigl(\|y(t)-y_d\|^2 - \|\bar y - y_d\|^2\bigr)\,dt.
\end{align}
Noting that \(\Vert y(\tau)\Vert^2\ge0\), this is exactly the strict dissipation inequality (6.1) with storage function \(S(y)=\langle y,\bar p\rangle\) and
\(\alpha(s)=\tfrac12\,s^2\).  Hence the system is strictly dissipative.
\end{proof}

\begin{remark}
Under the assumption $R < 1$, the operator $\mathcal{A}_m$ generates an exponentially stable semigroup (i.e., when $v \equiv 0$). As a consequence, the system \eqref{eq3.3} is stabilizable. Indeed, from equation \eqref{ee5.29}, we derive the following coupled evolution system:

\begin{align}
    \frac{d}{dt}
\begin{pmatrix}
\tilde{y}(t) \\
\tilde{p}(t)
\end{pmatrix}
=
\begin{pmatrix}
\mathcal{A}_m & 0 \\
0 & -\mathcal{A}_m^*
\end{pmatrix}
\begin{pmatrix}
\tilde{y}(t) \\
\tilde{p}(t)
\end{pmatrix}
+
\begin{pmatrix}
BB^*\bar{p} \\
-N\tilde{y}(t)
\end{pmatrix},
\quad \text{for a.e. } t\in(0,T).
\end{align}

The corresponding solutions can be expressed as:

\begin{align}
    \tilde{y}(t) = e^{t\mathcal{A}_m} \tilde{y}(0) + \int_0^t e^{(t-s)\mathcal{A}_m} BB^*\bar{p} \, ds,
\end{align}

\begin{align}
    \tilde{p}(t) = e^{(T-t)\mathcal{A}_m^*} \tilde{p}(T) + N \int_t^T e^{(s-t)\mathcal{A}_m^*} \tilde{y}(s)\,ds.
\end{align}

We estimate $\tilde{y}(t)$ as:

\begin{align}
    \|\tilde{y}(t)\|
\le M e^{-\nu t} \|\tilde{y}(0)\|
+ \frac{M \|BB^*\bar{p}\|}{\nu} \left(1 - e^{-\nu t} \right).
\end{align}

Hence, there exists a constant $C_1 > 0$ such that:

\begin{align}\label{ee6.9}
    \|\tilde{y}(t)\| \le C_1 \|\tilde{y}(0)\| e^{-\nu t}.
\end{align}

By injecting the estimate \eqref{ee6.9} into the adjoint convolution, we deduce:

\begin{align}
    \|\tilde{p}(t)\|
\le M e^{-\nu(T - t)} \|\tilde{p}(T)\|
+ C_2 \|\tilde{y}(0)\| e^{-\nu t},
\end{align}
for some constant $C_2 > 0.$ We thus conclude that there exists $K > 0$ such that,

\begin{align}
    \|y_T(t) - \bar{y}\| + \|p_T(t) - \bar{p}\|
\le K\left(
\|y_0 - \bar{y}\| e^{-\nu t}
+ \|p_T - \bar{p}\| e^{-\nu(T - t)}
\right),\quad \text{a.e.}\; t \in (0,T).
\end{align}
\end{remark}

Since the system is strictly dissipative, it follows that it is stabilizable, which in turn allows us to establish the integral turnpike property.
  \subsection{Integral turnpike} 

This subsection addresses the integral turnpike property. We establish a weak version of the turnpike and show that the cost‑minimizing control remains close to the static optimal control.

\begin{theorem}\label{t6.4}
Let \(T \le A\). There exists a constant \(M>0\), independent of \(T\), such that the optimal solution \((y_T,v_T)\) of the dynamic control problem and the solution \((\bar y,\bar v)\) of the static control problem satisfy
\begin{align}
          \displaystyle\int_0^T\left(\Vert y_T-\bar{y}\Vert+\Vert v_T-\bar{v}\Vert\right)dt\leq  M.
      \end{align}
\end{theorem}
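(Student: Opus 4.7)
The plan is to combine the strict dissipativity established in Proposition \ref{p6.2} with the optimality of $v_T$, exploiting the boundedness of the horizon $T\le A$ and the exponential stability of $\mathcal T$ under the standing assumption $R<1$ (in contrast to Theorem \ref{th1.5}, whose proof relied on null-controllability, valid only for $T>A$). The first step is to re-derive the dissipation identity along the optimal trajectory $(y_T,v_T)$: multiplying the state equation $\partial_t y_T=\mathcal A_m y_T+Bv_T$ from \eqref{eq3.9} by the stationary adjoint $\bar p$, integrating by parts on $[0,\tau]$, and applying the stationary optimality relations $\mathcal A_m^*\bar p=-N(\bar y-y_d)$ and $\bar v=-B^*\bar p$ from \eqref{eq3.10}, together with the polarization identity
\[
\langle y_T-\bar y,\bar y-y_d\rangle=\tfrac12\bigl(\|y_T-y_d\|^2-\|\bar y-y_d\|^2-\|y_T-\bar y\|^2\bigr),
\]
I would obtain an identity of the form
\[
\tfrac{N}{2}\int_0^\tau\|y_T-\bar y\|^2\,dt=\tfrac{N}{2}\int_0^\tau\bigl(\|y_T-y_d\|^2-\|\bar y-y_d\|^2\bigr)\,dt+\langle y_T(\tau)-y_0,\bar p\rangle+\int_0^\tau\langle v_T,\bar v\rangle\,dt-N\tau\langle\bar y,\bar y-y_d\rangle.
\]

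Next, I would bound each contribution on the right-hand side uniformly in $T\in(0,A]$. By optimality, $J_1(v_T)\le J_1(0)$, and since $R<1$ ensures exponential stability of $\mathcal T$, the free trajectory $t\mapsto e^{t\mathcal A_m}y_0$ remains uniformly bounded in $H$, so $J_1(0)\le C$ uniformly on $(0,A]$. This gives the uniform bounds $\int_0^T\|y_T-y_d\|^2\,dt\le 2C/N$ and $\int_0^T\|v_T\|^2\,dt\le 2C$; the terminal contribution $\|y_T(\tau)\|$ is controlled by Lemma \ref{le3.12} (or by the energy estimate \eqref{eq3.8}), while the cross term is absorbed via Cauchy–Schwarz, $\int_0^\tau\langle v_T,\bar v\rangle\,dt\le\sqrt{2C}\,\sqrt{A}\,\|\bar v\|$. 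Combining, one obtains the uniform $L^2$ estimate $\int_0^T\|y_T-\bar y\|^2\,dt\le C_1$ with $C_1$ independent of $T$, and analogously $\int_0^T\|v_T-\bar v\|^2\,dt\le C_2$.

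The desired $L^1$ bound then follows from Cauchy–Schwarz, using $T\le A$:
\[
\int_0^T\|y_T-\bar y\|\,dt\le\sqrt{T}\Bigl(\int_0^T\|y_T-\bar y\|^2\,dt\Bigr)^{1/2}\le\sqrt{AC_1},
\]
and similarly for $v_T-\bar v$, yielding the claimed bound with $M=\sqrt{AC_1}+\sqrt{AC_2}$. The main obstacle is the first step: Definition \ref{d6.1} formulates strict dissipativity only along trajectories driven by the constant control $\bar v$, so the cancellation $\int_0^\tau\langle v_T,\bar p\rangle\,dt=\int_0^\tau\langle\bar v,\bar p\rangle\,dt$ used in the proof of Proposition \ref{p6.2} is unavailable for the genuine optimal control. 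One must re-derive the identity and carefully control the residual cross term $\int_0^\tau\langle v_T,\bar v\rangle\,dt$ via Young's inequality against the uniform $L^2$-estimate on $v_T$ furnished by the cost comparison $J_1(v_T)\le J_1(0)$.
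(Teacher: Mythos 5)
Your proposal is correct in substance but follows a genuinely different route from the paper. The paper's proof works with the full error system: it sets $\tilde y=y_T-\bar y$, $\tilde p=p_T-\bar p$, pairs $\partial_t\tilde y=\mathcal A_m\tilde y+B\tilde v$ with $\tilde p$ and $-\partial_t\tilde p=\mathcal A_m^*\tilde p+N\tilde y$ with $\tilde y$, and subtracts; since $\tilde v=-B^*\tilde p$, this yields the exact identity
\[
N\int_0^T\|\tilde y\|^2\,dt+\int_0^T\|\tilde v\|^2\,dt=\langle\tilde y(0),\tilde p(0)\rangle-\langle\tilde y(T),\tilde p(T)\rangle,
\]
so that both squared deviations appear at once and the entire burden falls on bounding the two boundary pairings uniformly in $T$, which the paper does via Lemma \ref{le3.12}; the $L^1$ conclusion then follows by the same Cauchy--Schwarz step you use. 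You instead pair the dynamic state equation with the \emph{static} adjoint $\bar p$ only (effectively re-running the dissipativity computation of Proposition \ref{p6.2} along the genuine optimal control rather than along $v\equiv\bar v$), and close the estimate with the suboptimality comparison $J_1(v_T)\le J_1(0)$ together with exponential stability under $R<1$. What each approach buys: the paper's identity is shorter and sign-exact, but it requires the uniform bound $\|p_T(0)\|\le C$, which Lemma \ref{le3.12} justifies by an observability/null-controllability argument available only for $T>A$ --- a genuine weak point in the regime $T\le A$ of this theorem; your route avoids the dynamic adjoint altogether (apart from the terminal payoff in $J_1$, which is nonnegative since $p_T(T)=y_T(T)$ in the paper's convention $\mathcal E(0)=I$), at the cost of tracking the extra cross term $\int_0^\tau\langle v_T,\bar v\rangle\,dt$ and the constant $N\tau\langle\bar y,\bar y-y_d\rangle$, both harmless for $\tau\le T\le A$. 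You correctly identify that Definition \ref{d6.1} and Proposition \ref{p6.2} cannot be invoked verbatim for $v_T\ne\bar v$ and that the identity must be re-derived; with that done, your argument is complete and, if anything, rests on slightly weaker prerequisites than the paper's.
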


\begin{proof}[of Theorem \ref{t6.4}]
Define the error variables
\[
\tilde y(t) = y_T(t) - \bar y,
\quad
\tilde p(t) = p_T(t) - \bar p,
\]
with boundary conditions
\[
\tilde y(0)=y_0-\bar y,\quad
\tilde y(T)=y_T(T)-\bar y,
\quad
\tilde p(0)=p_T(0)-\bar p,\quad
\tilde p(T)=p_T(T)-\bar p.
\]
Then the state–adjoint system becomes
\begin{equation}\label{err-sys}
\begin{cases}
\partial_t\tilde y = \mathcal A_m\,\tilde y + B\,\tilde v,\\[4pt]
-\partial_t\tilde p = \mathcal A_m^*\,\tilde p + N\,\tilde y.
\end{cases}
\end{equation}
Multiply the first equation by \(\tilde p\), the second by \(\tilde y\), and subtract to obtain
\[
N\int_0^T \|\tilde y\|^2\,dt
\;+\;
\int_0^T \|\tilde v\|^2\,dt
=
\langle \tilde y(0),\,\tilde p(0)\rangle
\;-\;
\langle \tilde y(T),\,\tilde p(T)\rangle.
\]
By Cauchy–Schwarz and the assumptions
\(\|\tilde y(T)\|^2 \le \epsilon\) and \(\|\tilde p(0)\|^2 \le \epsilon\), it follows that
\begin{align}
    N\int_0^T\Vert y_T-\bar{y}\Vert^2\,dt+\displaystyle\int_0^T\Vert v_T-\bar{v}\Vert^2_{L^2(\omega)}\,dt  \leq M.
\end{align}
for some constant \(M>0\). 
\end{proof}

\begin{remark}
By Theorem \ref{t6.4} and the arguments in \cite{ref54}, one also obtains the standard estimate for the time‑average deviation (turnpike measure) of \((y_T,v_T)\) from \((\bar y,\bar v)\). The realization of the turnpike property in the case $T \leq A$ confirms that the population can be stabilized before the next generation (Theorem \ref{t6.4}).
\end{remark}

\section{Numerical Simulations}\label{s7}
 Let $\{\varphi_k\}_{k\ge0}$ be an orthonormal basis of $L^2(0,L)$ formed by the eigenfunctions of the Neumann Laplacian, and let $\{\lambda_k\}_{k\ge0}$ be the corresponding nondecreasing eigenvalues.  Then for each $k\ge0$, the pair $(\varphi_k,\lambda_k)$ solves

\begin{equation}\label{e7.1}
\left\lbrace\begin{array}{ll}
-\partial_{xx} \varphi_k =\lambda_k\varphi_k & \text{in}\quad (0,L),\\
\\\dfrac{\partial\varphi_k}{\partial\nu} =0 & \text{on}\quad\lbrace 0,L\rbrace.
\end{array}
\right.
\end{equation} 
Under the Neumann boundary conditions, each eigenfunction $\varphi_k$ of \eqref{e7.1} can be written up to normalization as $
\varphi_k(x) = A_k \cos\bigl(\sqrt{\lambda_k}\,x\bigr),
$ with the nonzero eigenvalues given by $
\lambda_k = \frac{k^2\pi^2}{L^2}, 
\quad k=1,2,3,\dots.
$ Projecting the age–structured state $y(x,a,t)$ onto this basis, $
y(x,a,t) = \displaystyle\sum_{k=1}^\infty y_k(a,t)\,\varphi_k(x),
$ yields for each mode $y_k$ the decoupled system

\begin{equation}\label{e7.2}
\left\lbrace 
\begin{array}{ll}
\partial_t y_{k}(a,t)+\partial_a y_{k}(a,t)+(\mu(a) +\lambda_{k}) y_{k}(a,t)=\mathds{1}_{\lbrace 0\rbrace}(a)v_k(a,t), & \text{ in } [0,A]\times[0,T],\\  
\\y_k\left(0,t\right) =\displaystyle\int_0^A\beta_k(a)y_k(a,t)da, & \text{ in }  [0,T], \\  
\\y_{k}\left(a,0\right)=y_{0,k}(a), &  
\text{ in } [0,A].  
\end{array}
\right.
\end{equation}

In our numerical experiments, we consider the following data

$$
\beta_k(a) = 60^5\,a^2\,(A-a)^2\,\exp\left(-3\left(a - \frac{A}{2}\right)^2\right), 
\quad \mu(a) = \frac{1}{50(A-a)}.
$$


For the linear-quadratic optimal control formulation, we consider the dynamic problem:
\begin{align}
\min_{Y,V} \ \frac{1}{2}\Bigl((Y-Y_d)^T(Y-Y_d) + V^T V + Y^T Y\Bigr),
\end{align}
where $Y$ and $V$ represent the state and control vectors, respectively, subject to the dynamics governed by system \eqref{eq3.3}.

For more details on the discretization methods employed, the reader is referred to \cite{ref27}.

\vspace{0.5em}
\noindent
\textbf{Implementation details.} The optimal control problem is solved using Python, within the Anaconda distribution. We utilize the Spyder development environment and the CasADi toolbox, which enables efficient symbolic differentiation and numerical optimization.


\begin{figure}[H]
    \centering
  \includegraphics[width=0.7\textwidth]{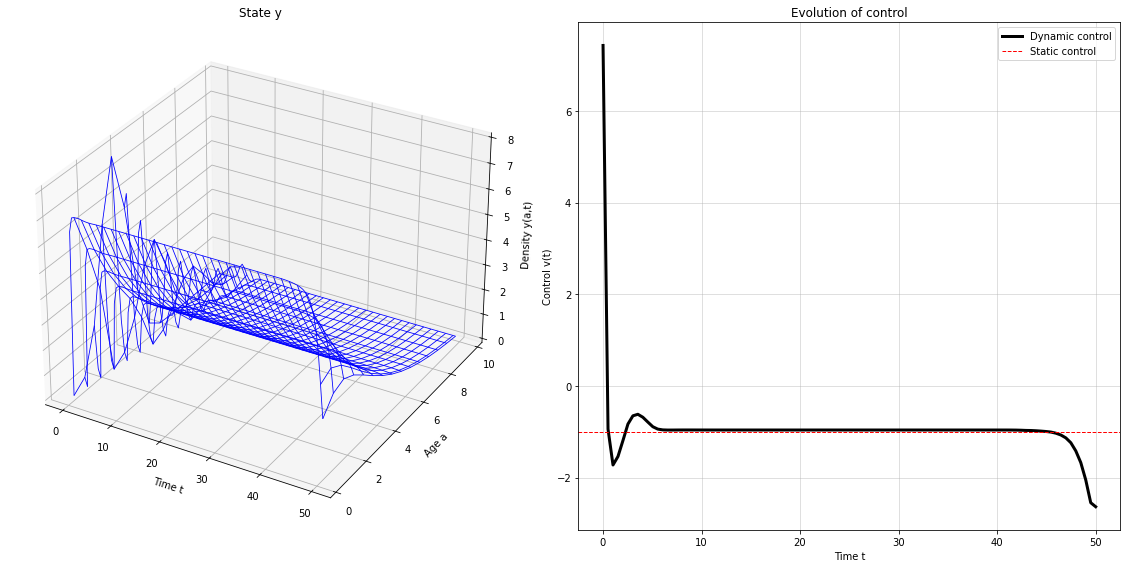}
    \caption{Optimal state and control observed}
    \label{fi1}
\end{figure}



\begin{figure}[H]
    \centering
  \includegraphics[width=0.7\textwidth]{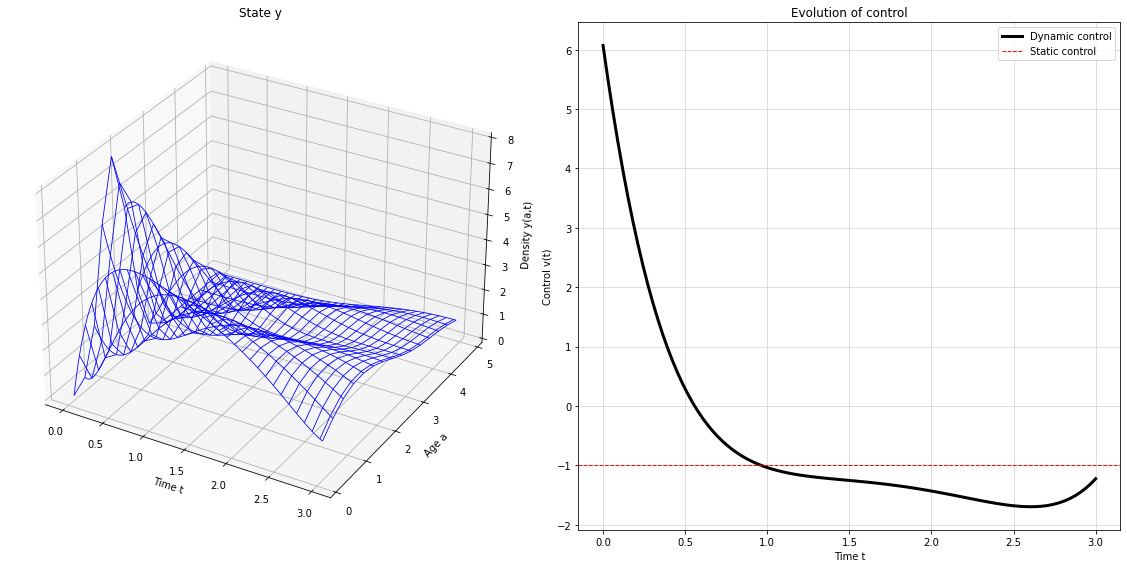}
    \caption{Optimal state and control observed over a short time horizon. The turnpike property fails to hold, confirming that a sufficiently long duration is both necessary and sufficient for its emergence.}
    \label{fi2}
\end{figure}

Despite these variations in data, the phenomenon endures, demonstrating that it is not governed by a single parameter. In fact, beyond the time factor, it is also shaped by the system’s dissipative structure and by the form of the cost function in the optimal control problem.




These numerical results corroborate the turnpike property asserted in Theorems \ref{th1.5}.  The figures above illustrate the temporal evolution of the population with respect to age, under the influence of birth control interventions. These visualizations aim to demonstrate how optimal control strategies can effectively guide the population dynamics toward the desired target state $y_d$, often referred to as the "ideal population." Remarkably, this objective is achieved by applying a control that remains relatively stable throughout the time horizon, with significant adjustments occurring primarily at the initial and final phases. This behavior exemplifies the turnpike phenomenon, where the optimal trajectory stays close to a steady-state regime for most of the control period.





\section{Open Problems and Comments}\label{s8}  
These population dynamics models serve a dual purpose : first, to explain the evolution of a population over time; and second, to provide insights into birth and death processes by addressing key questions such as: When is the extinction of a population almost certain? and How can we achieve an ideal population at minimal cost? To our knowledge, the turnpike property emerges as a powerful tool for addressing the latter.

For instance, in the context of malaria, various global strategies aim to reduce disease transmission by targeting the population of disease vectors. Mosquito control remains a central concern, particularly through genetic modification techniques intended to limit or suppress vector reproduction. One widely used method consists of irradiating insects to induce sterility prior to release, thereby decreasing the reproductive capacity of the target population. If near-certain extinction of such a population is achievable, then identifying an optimal control strategy that drives the population density toward a desired threshold (e.g., below a critical viability level) becomes both economically viable and ecologically sustainable. As a perspective, we propose the following research directions:
\begin{itemize}
\item In system \eqref{e7.2}, the operator $\lambda$ acts as a mortality term, influencing the system without affecting the turnpike property. It would be insightful to investigate the impact of time-dependent mortality on this behavior.
\item Establish a turnpike-type theorem for the birth control model in the context of shape optimization.
\item Analyze the effect of constraining the final state $y_T(T)$ in the optimal control formulation. In particular, study how minimizing the discrepancy between $y_T(T)$ and the target state $y_d$ influences system dynamics.
\item Extend the theoretical framework, especially null-controllability and the turnpike property, to nonlinear systems, an area that remains largely unexplored.
\item Go beyond simulation by utilizing numerical results for population forecasting. This would enable the design of control strategies with robust long-term predictive power.
\item Incorporate real-time data to refine control strategies in dynamic optimization settings. Machine learning techniques could be integrated to anticipate intervention needs and enhance cost-effectiveness.
\item Improve model adaptability to account for unexpected events (e.g., political instability, environmental shifts, or public health emergencies) that may affect population trajectories. Scenario-based simulations can guide real-time adjustments to control policies.
\item In the present study, the turnpike property was established using a feedback control whose positivity is not guaranteed over the interval $(0,T)$. Imposing a positivity constraint (interpreted as a reduction in mortality or increase in life expectancy) could yield a novel and significant extension of the turnpike result.
\item Explore the turnpike phenomenon in predator-prey systems, particularly within multi-species trophic networks as discussed in \cite{ref74}, to examine mechanisms of population persistence.
\end{itemize}

These research directions outline both theoretical challenges and practical opportunities, offering promising avenues for advancing population dynamics modeling and optimal control theory.
\appendix
\section{Proof of Proposition \ref{th1.3}}\label{annexe:A}
The Proof of Proposition \ref{th1.3} is based on Definition \ref{d3.2}. Thus, we examine this property in the cases where \(T < A - a_0\) and \(T > A - a_0\).
\begin{proposition}\label{pr2.2}
The system \eqref{eq1.5} is not null-controllable for any  \(T < A-a_0\).
\end{proposition}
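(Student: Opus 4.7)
The strategy is geometric: the control in \eqref{eq1.5} is supported in ages $[0,a_0]$, so an age characteristic $s\mapsto a-T+s$ terminating at $(a,T)$ with $a-T>a_0$ never crosses the control region. Under the hypothesis $T<A-a_0$, the interval $(a_0+T,A)$ is non-empty, and this is enough to exhibit an initial datum that no admissible control can drive to zero at time $T$.

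First, I would invoke the Duhamel representation \eqref{eq2.5}: for $a>T$,
\begin{equation*}
y(x,a,T) \;=\; \frac{\pi(a)}{\pi(a-T)}\,\bigl(e^{T\triangle} y_0(\cdot,a-T)\bigr)(x) \;+\; \int_{a-T}^{a}\frac{\pi(a)}{\pi(s)}\, e^{(a-s)\triangle}\!\left[\mathds{1}_{\omega\times[0,a_0]}(x,s)\,v(x,s,s-a+T)\right]ds,
\end{equation*}
where $e^{t\triangle}$ denotes the Neumann heat semigroup on $\Omega$. Whenever $a-T>a_0$, the integration variable $s$ runs over $[a-T,a]\subset(a_0,A)$, the indicator $\mathds{1}_{[0,a_0]}(s)$ is identically zero, and the solution at $(a,T)$ depends only on $y_0(\cdot,a-T)$, independently of the control $v$.

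Second, I would construct an explicit counterexample. Pick $a_1,a_2$ with $a_0+T<a_1<a_2<A$, which is possible precisely because $T<A-a_0$, and take $y_0(x,a)=\mathds{1}_{[a_1,a_2]}(a)$. Since constants lie in $\ker\triangle$ under Neumann boundary conditions, $e^{T\triangle} y_0(\cdot,a-T)\equiv 1$ whenever $a-T\in[a_1,a_2]$, and thus
\begin{equation*}
y(x,a,T) \;=\; \frac{\pi(a)}{\pi(a-T)} \;>\; 0,\qquad x\in\Omega,\; a\in[a_1+T,a_2+T],
\end{equation*}
for every admissible control $v\in L^2(\omega\times(0,a_0)\times(0,T))$. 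This contradicts the null-controllability requirement \eqref{e3.2}, so \eqref{eq1.5} cannot be null-controllable at time $T<A-a_0$.

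The only minor technical point is to justify that the characteristic representation \eqref{eq2.5}, derived for smooth data, extends to the chosen $L^2$ initial datum; this follows routinely from the well-posedness of \eqref{eq1.5} in $H$ (Lemma \ref{le1.1}) together with density of regular data and continuity of the solution map in $H$. I do not anticipate any serious obstacle: the result isolates a purely kinematic phenomenon, namely the finite propagation along the age variable, which leaves an "uncontrollable tail" of ages $(a_0+T,A)$ at time $T$ as soon as $T<A-a_0$.
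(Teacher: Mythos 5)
Your overall strategy is exactly the paper's: for final ages $a$ with $a-T>a_0$ the Duhamel integral vanishes because the indicator $\mathds{1}_{[0,a_0]}$ is never activated along the characteristic, so $y(\cdot,a,T)$ reduces to the free evolution $\frac{\pi(a)}{\pi(a-T)}e^{T\triangle}y_0(\cdot,a-T)$ of the initial datum, and one then exhibits a $y_0$ for which this cannot vanish. The paper phrases this as $\operatorname{Ran}(\mathcal{T}_T)\not\subset\operatorname{Ran}(\Phi_T)$ and simply takes $y_0$ nonzero on $(a_0,A-T]$; your added touch of using a constant-in-$x$ indicator so that $e^{T\triangle}$ acts trivially under Neumann conditions is a nice way to make the non-vanishing explicit.

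There is, however, a concrete slip in your counterexample: you choose the \emph{initial} ages $a_1,a_2$ in the interval $(a_0+T,\,A)$, whereas the constraints actually needed are $a_0<a_1<a_2\le A-T$. The lower bound only has to be $a_0$ (it is the final age $a=\alpha+T$ that must exceed $a_0+T$, i.e.\ the initial age $\alpha=a-T$ must exceed $a_0$), and, more importantly, you impose no upper bound ensuring that the final ages $a\in[a_1+T,a_2+T]$ remain inside $[0,A]$. If $2T\ge A-a_0$ (which is compatible with $T<A-a_0$), then every $a_1>a_0+T$ gives $a_1+T>A$, the interval of final ages you exhibit lies entirely outside the age domain, and your argument becomes vacuous: the cohort you track has exited the population by time $T$ and says nothing about $y(\cdot,\cdot,T)$ on $[0,A]$. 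The fix is immediate — take $a_0<a_1<a_2\le A-T$, an interval that is nonempty precisely because $T<A-a_0$, which recovers the paper's choice of the window $(a_0,A-T]$ for the support of $y_0$ and the window $(a_0+T,A]$ for the uncontrolled final ages. With that correction the proof is complete; the closing remark about extending \eqref{eq2.5} from smooth to $L^2$ data via density and the well-posedness of Lemma \ref{le1.1} is fine.
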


\begin{proof}[of Proposition \ref{pr2.2}]
Let \(T \in (0, A-a_0)\) and consider \(a\) satisfying \(T+a_0 < a \leq A\). On one hand, for all \(v \in L^2(0,T;H)\), we have
\[
(\Phi_T v)(a)= \int_{a-T}^{a}\frac{\pi(a)}{\pi(s)}e^{(a-s)\triangle}\mathds{1}_{\{\omega\times[0,a_0]\}}(x,s)v(x,s,s-a+T)\,ds = 0,
\]
since \(\mathds{1}_{\{\omega\times[0,a_0]\}}(x,s)=0\) whenever \(s > a_0\), which holds because \(a_0 < a-T \leq s\).

On the other hand, the evolution of the initial state is given by
\[
(\mathcal{T}_T y_0)(a)= \frac{\pi(a)}{\pi(a-T)}e^{T\triangle}y_{0}(x,a-T), \quad \forall\, a \in (T+a_0, A].
\]
Hence, if \(y_0(x,a) \neq 0\) for all \(a \in (a_0, A-T]\), then it is impossible for \((\mathcal{T}_T y_0)(a)\) to vanish. Consequently, we conclude that
\[
\operatorname{Ran} (\mathcal{T}_T) \not\subset \operatorname{Ran}(\Phi_T),
\]
implying that system \eqref{eq1.5} is not null-controllable when \(T < A-a_0\).
\end{proof}
\begin{proposition}\label{pr2.3}  
Let us define the admissible set as follows:  
\begin{equation*}
\mathcal{R}=\left\lbrace \bar{y}\in H \mid \dfrac{ e^{-a\triangle}\bar{y}(x,a)}{\pi(a)}\in H ,\quad \dfrac{\dfrac{e^{-a\triangle}\bar{y}(x,a)}{\pi(a) }-\displaystyle\int_{0}^{A-a}\beta(x,z)\dfrac{\pi(z)}{\pi(z+a)}e^{-a\triangle}\bar{y}(x,z+a)dz}{\displaystyle\int_{0}^{a}\frac{e^{-z\triangle}\mathds{1}_{\lbrace\omega\times[0,a_0]\rbrace}(x,z)}{\pi(z) }dz}\in H \right\rbrace.
\end{equation*}  
Under the assumptions of Proposition \ref{th1.3}, we have that \(\mathcal{R}\subset \operatorname{Ran}(\Phi_T)\) for every \(T > A - a_0\).
\end{proposition}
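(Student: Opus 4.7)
The plan is to find a control of the form $v(x,a,t)=u(x,t-a)$ driving \eqref{eq1.5} (with $y_0\equiv 0$) to $\bar y$ at time $T$, and to verify that the formula for $u$ obtained by inversion is exactly the quotient in the definition of $\mathcal{R}$. This reproduces, in the present ``reaching $\bar y$'' setting, the sliding-variable structure of the null-control \eqref{e3.4}.

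First, I insert $y_0\equiv 0$ and $v(x,s,\tau)=u(x,\tau-s)$ into the characteristic representation \eqref{eq2.5}, evaluate at $t=T$, and, for $a<T$, apply $\pi(a)^{-1}e^{-a\triangle}$ to both sides. Reading the identity mode-by-mode in the Neumann eigenbasis of $-\triangle$ (so that $u(\cdot,T-a)$ is scalar in each mode and legitimately movable past the semigroup) yields
\begin{align*}
\frac{e^{-a\triangle}\bar{y}(x,a)}{\pi(a)}=b(x,T-a)+u(x,T-a)\int_{0}^{a}\frac{e^{-s\triangle}\mathds{1}_{\omega\times[0,a_0]}(x,s)}{\pi(s)}\,ds.
\end{align*}
Next, I compute $b(x,T-a)=\int_0^A\beta(x,a')y(x,a',T-a)\,da'$ via the renewal equation and the hypothesis $(\mathcal{N}ull_\beta)$: since $\beta\equiv 0$ on $[0,a_b]\supset[0,a_0]$, only ages $a'\ge a_b\ge a_0$ contribute. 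For $a_b\le a'\le A-a$, the forward characteristic from $(a',T-a)$ to $(a'+a,T)$ bypasses the control strip $[0,a_0]$, so
\begin{align*}
y(x,a',T-a)=\frac{\pi(a')}{\pi(a'+a)}\,e^{-a\triangle}\bar y(x,a'+a),
\end{align*}
while the slice $a'\in(A-a,A]$ contributes zero because $\pi(A)=0$ (from $(H_1)$) forces the trace of $y$ at the maximal age to vanish. Using $\beta\equiv 0$ on $[0,a_b]$ to extend the lower limit to $0$ produces
\begin{align*}
b(x,T-a)=\int_0^{A-a}\beta(x,z)\frac{\pi(z)}{\pi(z+a)}e^{-a\triangle}\bar{y}(x,z+a)\,dz.
\end{align*}

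Combining the two identities above recovers exactly the $\mathcal R$-quotient as a closed formula for $u(x,T-a)$, and the two conditions defining $\mathcal R$ supply respectively the finiteness of the first term of the numerator and the $H$-membership of the whole quotient. The hypothesis $T>A-a_0$ is precisely what guarantees that the arguments $t-a\in[-a_0,T]$ occurring for $(a,t)\in[0,a_0]\times[0,T]$ fall inside the domain $[T-A,T]$ on which the candidate $u(\cdot,T-a)$ is $L^2$, so $v(x,a,t)=u(x,t-a)\mathds{1}_{\omega\times[0,a_0]}(x,a)$ belongs to $L^2(\omega\times[0,a_0]\times[0,T])$. Substituting this $v$ back into \eqref{eq2.5} together with the closed form of $b$ closes the verification $\Phi_T v=\bar y$, proving $\bar y\in\operatorname{Ran}(\Phi_T)$.

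The main obstacle will be the rigorous justification of pulling the $x$-dependent factor $u(\cdot,T-a)$ outside the heat semigroup in the derivation above: this identity is false at the operator level but becomes valid mode-by-mode in the Neumann eigenbasis of $-\triangle$, where the transport--diffusion system decouples into a countable family of scalar renewal equations with age-dependent coefficients (compare Section \ref{s7}) and each mode satisfies its own scalar quotient identity. A secondary subtlety is the contribution of the boundary slice $a'\in(A-a,A]$ in the renewal integral, which is handled through $\pi(A)=0$ inherited from $(H_1)$; this degeneracy forces $y(\cdot,A,\cdot)\equiv 0$ as already indicated.
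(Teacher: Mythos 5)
Your overall strategy --- seek a control of the form $v(x,a,t)=u(x,t-a)$ and invert the characteristic representation \eqref{eq2.6} --- is the same as the paper's, and your backward-propagation computation of $b(x,T-a)$ for ages $a'\in[a_b,A-a]$ is a genuinely slicker route to the formula that the paper obtains by induction over the intervals $[ja_b,(j+1)a_b]$ (its Case~4). However, there are concrete gaps. First, your claim that the slice $a'\in(A-a,A]$ contributes nothing to $b(x,T-a)$ ``because $\pi(A)=0$'' is wrong: $(H_1)$ kills the trace only at the single age $A$, whereas for $a'\in(A-a,A)$ and $T>A-a_0$ the backward characteristic from $(a',T-a)$ \emph{does} enter the control strip $[0,a_0]$ (precisely because $a'-(T-a)<a_0$ there, which is equivalent to $T>A-a_0$), so $y(x,a',T-a)$ is a nonzero multiple of $u\bigl(x,(T-a)-a'\bigr)$ in general. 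That slice vanishes only because one \emph{chooses} $u\equiv 0$ on $\{t-a<T-A\}$ --- the paper's Case~1 --- and this choice must be made explicit, since without it the closed form of $b$ and hence your quotient identity fail. Second, your single identity is derived only for $a<T$; since $T$ may lie in $(A-a_0,A]$, the ages $a\in[T,A]$ form a nonempty set on which the correct inversion has denominator $\int_{a-T}^{a}$ and no $b$-term (the paper's Case~2), and this regime is simply absent from your argument.

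The more serious issue is your proposed justification for commuting the $x$-dependent factor $u(\cdot,T-a)$ with the heat semigroup. Projecting onto the Neumann eigenbasis does not decouple the problem: the coefficient of $\varphi_k$ in the product $u(x,s)g(x)$ is the convolution $\sum_{j,l}u_j(s)g_l\langle\varphi_j\varphi_l,\varphi_k\rangle$, not $u_k(s)g_k$, so $u$ is not ``scalar in each mode'' in any sense that lets you move it past $e^{-a\triangle}$; the decoupling invoked in Section~\ref{s7} occurs only because there the control itself is expanded mode by mode. (The paper's own formulas share this formal weakness, but your claimed repair does not actually repair it.) Finally, your backward-propagation argument derives the \emph{necessary} form of $u$ under the hypothesis that the target is reached; the converse verification $\Phi_Tv=\bar y$ is not a one-line substitution, because $b$ is defined implicitly by the renewal equation and one must check that the proposed closed form actually solves that Volterra equation --- this is exactly what the paper's inductive Case~4 accomplishes and what your ``substituting back closes the verification'' elides.
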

\begin{proof}[of Proposition \ref{pr2.3}]
Before determining the admissible space, we first investigate the existence of a control. We assume that \( a_0 \leq a_b \) and consider \( T \in \;] A-a_0, A] \). Our objective is to find a control function \( u(x,t-a) \in L^2 (\Omega\times [-A,T]) \) satisfying  
\begin{equation}\label{eq2.8}
(\Phi_T v)(a) =\bar{y}(x,a) \qquad \forall \;a\in [0,A],\quad x\in \Omega.
\end{equation}  

{\bf \underline{Case 1 : \( -A < t-a < T-A \)}}
We first set \( u(x,t-a) = 0 \) and, using equation \eqref{eq2.6}, we obtain  
\begin{equation}\label{eq2.9}
(\Phi_T v)(a) = 0 \qquad \forall \;t-a<T-A.
\end{equation}  

{\bf \underline{Case 2: \( T-A \leq t-a \leq 0 \)}}  
By employing equation \eqref{eq2.6}, we derive  
\begin{equation}
(\Phi_T v)(a) = u(x,T-a) \int_{a-T}^{a} \frac{\pi(a)}{\pi(s)} e^{(a-s)\triangle} \mathds{1}_{\lbrace\omega\times[0,a_0]\rbrace}(x,s) ds, \qquad \forall a\in [T,A],\quad \forall x\in \Omega.
\end{equation}  

To ensure that equation \eqref{eq2.8} holds, we must solve  
\begin{align*}
u(x,T-a) \int_{a-T}^{a} \frac{\pi(a)}{\pi(s)} e^{(a-s)\triangle} \mathds{1}_{\lbrace\omega\times[0,a_0]\rbrace}(x,s) ds &= \bar{y}(x,a), \qquad \forall a\in [T,A], \\
u(x,T-a) &= \dfrac{e^{-a\triangle} \bar{y}(x,a) / \pi(a)}{\displaystyle\int_{a-T}^{a} \frac{e^{-s\triangle} \mathds{1}_{\lbrace\omega\times[0,a_0]\rbrace}(x,s)}{\pi(s)} ds}.
\end{align*}  

By setting \( T-a = s \), we rewrite the control as  
\begin{equation}\label{eq2.11}
u(x,s) = \dfrac{e^{(s-T)\triangle} \bar{y}(x,T-s) / \pi(T-s)}{\displaystyle\int_{-s}^{T-s} \frac{e^{-z\triangle} \mathds{1}_{\lbrace\omega\times[0,a_0]\rbrace}(x,z)}{\pi(z)} dz}, \qquad \forall s\in [T-A,0],\quad \forall x\in \Omega.
\end{equation}  

Since we have \( -a_0 < T-A \leq s \), it follows that \( -s < a_0 \), which ensures that  
\begin{equation*}
\int_{-s}^{T-s} \frac{e^{-z\triangle} \mathds{1}_{\lbrace\omega\times[0,a_0]\rbrace}(x,z)}{\pi(z)} dz > 0.
\end{equation*}  
 \underline{\bf{Case 3 : $0\leq t-a\leq  a_b$}} 
Using \eqref{eq2.6} and leveraging \eqref{eq2.11}, we obtain  
\begin{align}\label{eq2.12}
(\Phi_t v)(a) = \pi(a) \dfrac{\displaystyle\int_{a-t}^{a} \frac{e^{-z\triangle} \mathds{1}_{\lbrace\omega\times[0,a_0]\rbrace}(x,z)}{\pi(z)} dz}{\displaystyle\int_{a-t}^{T+(a-t)} \frac{e^{-z\triangle} \mathds{1}_{\lbrace\omega\times[0,a_0]\rbrace}(x,z)}{\pi(z)} dz} \dfrac{e^{(t-T)\triangle} \bar{y}(x,T+(a-t))}{\pi(T+(a-t))}.
\end{align}  

From equation \eqref{eq2.6}, we have  
\begin{equation}\label{eq2.13}
(\Phi_T v)(a) = \pi(a)e^{a\triangle}b(x,T-a) + u(x,T-a) \int_{0}^{a} \frac{\pi(a)}{\pi(s)} e^{(a-s)\triangle} \mathds{1}_{\lbrace\omega\times[0,a_0]\rbrace}(x,s) ds, \quad a\in [ T-a_b, T],
\end{equation}  
where  
\begin{equation*}
b(x,t) = \int_{a_b}^{A} \beta(x,a)(\Phi_t u)(a) da, \qquad t\in [0,a_b].
\end{equation*}  
Decomposing \( b(x,t) \), we write  
\begin{align*}
b(x,t) &= \int_{a_b}^{t+A-T} \beta(x,a)(\Phi_t u)(a) da + \int_{t+A-T}^{A} \beta(x,a)(\Phi_t u)(a) da, \\
b(x,t) &= \int_{a_b}^{t+A-T} \beta(x,a)(\Phi_t u)(a) da.
\end{align*}  
For \( a \leq t+A-T \), from \eqref{eq2.12}, we obtain  
\begin{equation}
b(x,t) = \int_{a_b}^{t+A-T} \beta(x,a)  \pi(a) \dfrac{\displaystyle\int_{a-t}^{a} \frac{e^{-z\triangle} \mathds{1}_{\lbrace\omega\times[0,a_0]\rbrace}(x,z)}{\pi(z)} dz}{\displaystyle\int_{a-t}^{T+(a-t)} \frac{e^{-z\triangle} \mathds{1}_{\lbrace\omega\times[0,a_0]\rbrace}(x,z)}{\pi(z)} dz} \dfrac{e^{(t-T)\triangle} \bar{y}(x,T+(a-t))}{\pi(T+(a-t))} da.
\end{equation}  
Moreover,  
\begin{equation*}
\int_{a-t}^{T+(a-t)} \frac{e^{-z\triangle} \mathds{1}_{\lbrace\omega\times[0,a_0]\rbrace}(x,z)}{\pi(z)} dz = \int_{a-t}^{a} \frac{e^{-z\triangle} \mathds{1}_{\lbrace\omega\times[0,a_0]\rbrace}(x,z)}{\pi(z)} dz.
\end{equation*}  
Thus,  
\begin{equation*}
b(x,t) = \int_{a_b}^{t+A-T} \beta(x,a) \pi(a) \dfrac{e^{(t-T)\triangle} \bar{y}(x,T+(a-t))}{\pi(T+(a-t))} da, \qquad t\in [0,a_b], \quad \forall x\in \Omega,
\end{equation*}  
which implies  
\begin{equation*}
b(x,T-a) = \int_{a_b}^{A-a} \beta(x,z)  \pi(z) \dfrac{e^{-a\triangle} \bar{y}(x,z+a)}{\pi(z+a)} dz, \qquad a\in[T-a_b, T].
\end{equation*}  

Substituting this result into \eqref{eq2.13}, we obtain  
\begin{equation}
(\Phi_T v)(a) = \pi(a) \int_{a_b}^{A-a} \beta(x,z)  \pi(z) \dfrac{\bar{y}(x,z+a)}{\pi(z+a)} dz + u(x,T-a) \pi(a) \int_{0}^{a} \frac{e^{(a-s)\triangle} \mathds{1}_{\lbrace\omega\times[0,a_0]\rbrace}(x,s)}{\pi(s)} ds, \quad a\in [T-a_b, T].
\end{equation}  

To satisfy \eqref{eq2.8}, we impose  
\begin{align*}
 \pi(a) \int_{a_b}^{A-a} \beta(x,z)  \pi(z) \dfrac{\bar{y}(x,z+a)}{\pi(z+a)} dz + u(x,T-a) \pi(a) \int_{0}^{a} \frac{e^{(a-s)\triangle} \mathds{1}_{\lbrace\omega\times[0,a_0]\rbrace}(x,s)}{\pi(s)} ds = \bar{y}(x,a).
\end{align*}  
This leads to the control function  
\begin{align}
    u(x,T-a) = \dfrac{e^{-a\triangle} \dfrac{\bar{y}(x,a)}{\pi(a)} - \displaystyle\int_{a_b}^{A-a} \beta(x,z)  \pi(z) \dfrac{e^{-a\triangle} \bar{y}(x,z+a)}{\pi(z+a)} dz}{\displaystyle\int_{0}^{a} \frac{e^{-s\triangle} \mathds{1}_{\lbrace\omega\times[0,a_0]\rbrace}(x,s)}{\pi(s)} ds}, \quad a\in [T-a_b,T],
\end{align}  
where  
\begin{equation*}
\int_{0}^{a} \frac{e^{-s\triangle} \mathds{1}_{\lbrace\omega\times[0,a_0]\rbrace}(x,s)}{\pi(s)} ds > 0.
\end{equation*}  

By setting \( s = T-a \), we further refine  
\begin{align*}
u(x,s) = \dfrac{\dfrac{e^{(s-T)\triangle} \bar{y}(x,T-s)}{\pi(T-s)} - \displaystyle\int_{a_b}^{A-T+s} \beta(x,z)  \pi(z) \dfrac{e^{(s-T)\triangle} \bar{y}(x,T-(s-z))}{\pi(T-(s-z))} dz}{\displaystyle\int_{0}^{T-s} \frac{e^{-z\triangle} \mathds{1}_{\lbrace\omega\times[0,a_0]\rbrace}(x,z)}{\pi(z)} dz}, \quad s\in[0,a_b].
\end{align*}  

Defining  
\begin{equation}
\chi(x,s) = \int_{a_b}^{A-T+s} \beta(x,z)  \pi(z) \dfrac{e^{(s-T)\triangle} \bar{y}(x,T-(s-z))}{\pi(T-(s-z))} dz, \quad s\in[0,a_b],
\end{equation}  
we obtain the final expression  
\begin{align}\label{eq2.17}
    u(x,s) = \dfrac{e^{(s-T)\triangle} \bar{y}(x,T-s) - \pi(T-s) \chi(x,s)}{\pi(T-s) \displaystyle\int_{0}^{T-s} \frac{e^{-z\triangle} \mathds{1}_{\lbrace\omega\times[0,a_0]\rbrace}(x,z)}{\pi(z)} dz}, \quad s\in[0,a_b].
\end{align}  
\underline{\bf{Case 4 : $a_b\leq t-a\leq T$}}  
In this section, we will prove that  
\begin{align}
b(x,t) =\chi (x,t), \qquad t\in [ja_b, (j+1)a_b],
\end{align}  
and that \eqref{eq2.8} holds for all \( a \in [T-(j+1)a_b, T-ja_b] \) with \( j \geq 0 \).  

Indeed, from \eqref{eq2.17}, we have  
\begin{align*}
(\Phi_t v)(a) &= \pi(a)e^{a\triangle}b(x,t-a) + u(x,t-a) \int_{0}^{a} \frac{\pi(a)}{\pi(s)} e^{(a-s)\triangle} \mathds{1}_{\lbrace\omega\times[0,a_0]\rbrace}(x,s) ds, \\
(\Phi_t v)(a) &= \pi(a)e^{a\triangle}b(x,t-a) + \frac{e^{((t-a)-T)\triangle} \bar{y}(x,T-(t-a)) - \pi(T-(t-a)) \chi(x,t-a)}{\pi(T-(t-a)) \int_{0}^{T-(t-a)} \frac{e^{-z\triangle} \mathds{1}_{\lbrace\omega\times[0,a_0]\rbrace}(x,z)}{\pi(z)} dz} \\
&\quad \times \int_{0}^{a} \frac{\pi(a)}{\pi(z)} e^{(a-z)\triangle} \mathds{1}_{\lbrace\omega\times[0,a_0]\rbrace}(x,z) dz, \\
(\Phi_t v)(a) &= \pi(a)e^{a\triangle}b(x,t-a) \left[1 - \frac{\int_{0}^{a} \frac{e^{-z\triangle} m(x,z)}{\pi(z)} dz}{\int_{0}^{T-(t-a)} \frac{e^{-z\triangle} \mathds{1}_{\lbrace\omega\times[0,a_0]\rbrace}(x,z)}{\pi(z)} dz} \right] \\
&\quad + \pi(a) \frac{\int_{0}^{a} \frac{e^{-z\triangle} m(x,z)}{\pi(z)} dz}{\int_{0}^{T-(t-a)} \frac{e^{-z\triangle} \mathds{1}_{\lbrace\omega\times[0,a_0]\rbrace}(x,z)}{\pi(z)} dz} \times \frac{e^{(t-T)\triangle} \bar{y}(x,T-(t-a))}{\pi(T-(t-a))}.
\end{align*}  

To compute \( b(x,t) \) over the interval \([(j+1)a_b,(j+2)a_b]\), we use the fact that  
\begin{align*}
b(x,t) = \int_{a_b}^{t+A-T} \beta(x,a) (\Phi_t u)(a) da.
\end{align*}  
Thus, we decompose it as follows:  
\begin{align}
b(x,t) = \int_{a_b}^{t} \beta(x,a) (\Phi_t u)(a) da + \int_{t}^{t+A-T} \beta(x,a) (\Phi_t u)(a) da.
\end{align}  
After computation, we obtain  
\begin{align}
b(x,t) = \int_{a_b}^{A-T+t} \beta(x,z) \pi(z) \frac{e^{(t-T)\triangle} \bar{y}(x,T-(t-z))}{\pi(T-(t-z))} dz, \qquad t\in[(j+1)a_b,(j+2)a_b].
\end{align}  

In summary, the control function is given by  
\begin{equation}
u(x,s) =
\begin{cases}
0, & s \in [-A,T-A], \quad \forall x \in \Omega, \\
\frac{\frac{e^{(s-T)\triangle} \bar{y}(x,T-s)}{\pi(T-s)}}{\int_{-s}^{T-s} \frac{e^{-z\triangle} \mathds{1}_{\lbrace\omega\times[0,a_0]\rbrace}(x,z)}{\pi(z)} dz}, & s \in [T-A,0], \quad \forall x \in \Omega, \\
\frac{e^{(s-T)\triangle} \bar{y}(x,T-s) - \pi(T-s) \chi(x,s)}{\pi(T-s) \int_{0}^{T-s} \frac{e^{-z\triangle} \mathds{1}_{\lbrace\omega\times[0,a_0]\rbrace}(x,z)}{\pi(z)} dz}, & s \in [0,T], \quad \forall x \in \Omega,
\end{cases}
\end{equation}  
satisfying \eqref{eq2.8}, where  
\begin{equation}
\chi(x,t) = \int_{a_b}^{A-T+t} \beta(x,z) \pi(z) \frac{e^{(t-T)\triangle} \bar{y}(x,T-(t-z))}{\pi(T-(t-z))} dz, \qquad t \in [0,T], \quad \forall x \in \Omega.
\end{equation}  

Consequently, we have  
\begin{equation}
v(x,a,t) =
\begin{cases}
0, & t-a<T-A, \quad \forall x \in \Omega, \\
\frac{\frac{e^{((t-a)-T)\triangle} \bar{y}(x,T-(t-a))}{\pi(T-(t-a))}}{\int_{a-t}^{T-(t-a)} \frac{e^{-z\triangle} \mathds{1}_{\lbrace\omega\times[0,a_0]\rbrace}(x,z)}{\pi(z)} dz}, & T-A \leq t-a \leq 0, \quad \forall x \in \Omega, \\
\frac{e^{((t-a)-T)\triangle} \bar{y}(x,T-(t-a)) - \pi(T-(t-a)) \chi(x,t-a)}{\pi(T-(t-a)) \int_{0}^{T-(t-a)} \frac{e^{-z\triangle} \mathds{1}_{\lbrace\omega\times[0,a_0]\rbrace}(x,z)}{\pi(z)} dz}, & 0 < t-a \leq T, \quad \forall x \in \Omega.
\end{cases}
\end{equation}  

Finally, the operator \(\Phi_t v\) satisfies  
\begin{equation}
(\Phi_t v)(a) =
\begin{cases}
0, & t-a<T-A, \quad \forall x \in \Omega, \\
\pi(a) \frac{\int_{a-t}^{a} \frac{e^{-z\triangle} \mathds{1}_{\lbrace\omega\times[0,a_0]\rbrace}(x,z)}{\pi(z)} dz}{\int_{a-t}^{T+(a-t)} \frac{e^{-z\triangle} \mathds{1}_{\lbrace\omega\times[0,a_0]\rbrace}(x,z)}{\pi(z)} dz} \frac{e^{(t-T)\triangle} \bar{y}(x,T+(a-t))}{\pi(T+(a-t))}, & T-A \leq t-a \leq 0, \quad \forall x \in \Omega, \\
\pi(a)e^{a\triangle}b(x,t-a) \left[1 - \frac{\int_{0}^{a} \frac{e^{-z\triangle} \mathds{1}_{\lbrace\omega\times[0,a_0]\rbrace}(x,z)}{\pi(z)} dz}{\int_{0}^{T-(t-a)} \frac{e^{-z\triangle} \mathds{1}_{\lbrace\omega\times[0,a_0]\rbrace}(x,z)}{\pi(z)} dz} \right] \\
+ \pi(a) \frac{\int_{0}^{a} \frac{e^{-z\triangle} \mathds{1}_{\lbrace\omega\times[0,a_0]\rbrace}(x,z)}{\pi(z)} dz}{\int_{0}^{T-(t-a)} \frac{e^{-z\triangle} \mathds{1}_{\lbrace\omega\times[0,a_0]\rbrace}(x,z)}{\pi(z)} dz} \frac{e^{(t-T)\triangle} \bar{y}(x,T-(t-a))}{\pi(T-(t-a))}, & 0 < t-a \leq T, \quad \forall x \in \Omega.
\end{cases}
\end{equation}  
\end{proof}
\begin{remark}\label{re2.4}
Let \(\varphi \in D(\mathcal{A}_m)\) be such that \(\frac{\varphi}{\pi} \in H\). We aim to show that \(\varphi \in \mathcal{R}\). For every \(\varphi \in D(\mathcal{A}_m)\), there exists \(f \in H\) such that
\[
\varphi' + \mu \varphi - \Delta \varphi = f.
\]
Then, one can write
\[
\frac{e^{-a\Delta}\varphi(x,a)}{\pi(a)} = \varphi(x,0) + \int_0^a \left( \frac{e^{-s\Delta}\varphi(x,s)}{\pi(s)} \right)' ds,
\]
which, by applying the quotient rule, becomes
\begin{equation}\label{eq2.25}
\frac{e^{-a\Delta}\varphi(x,a)}{\pi(a)} = \varphi(x,0) + \int_0^a \frac{\left(e^{-s\Delta}\varphi(x,s)\right)' \pi(s) - \pi'(s) e^{-s\Delta}\varphi(x,s)}{\pi(s)^2} ds.
\end{equation}
Observing that
\[
\left(e^{-s\Delta}\varphi(x,s)\right)' \pi(s) = -\Delta \,e^{-s\Delta}\varphi(x,s) \,\pi(s) + e^{-s\Delta}\varphi'(x,s)\,\pi(s),
\]
and substituting into \eqref{eq2.25}, we obtain
\[
\frac{e^{-a\Delta}\varphi(x,a)}{\pi(a)} = \varphi(x,0) + \int_0^a \frac{\pi(s)e^{-s\Delta}\varphi'(x,s) + \mu(s)\pi(s)e^{-s\Delta}\varphi(x,s) - \pi(s)\Delta\,e^{-s\Delta}\varphi(x,s)}{\pi(s)^2} ds.
\]
Hence, one deduces that
\[
\frac{e^{-a\Delta}\varphi(x,a)}{\pi(a)} = \int_0^A \beta(x,a)\varphi(x,a)da + \int_0^a \frac{e^{-s\Delta}\left(\varphi'(x,s) + \mu(s)\varphi(x,s) - \Delta\varphi(x,s)\right)}{\pi(s)} ds,
\]
or equivalently,
\begin{equation}\label{eq2.26}
\frac{e^{-a\Delta}\varphi(x,a)}{\pi(a)} = \int_0^A \beta(x,a)\varphi(x,a)da + \int_0^a \frac{e^{-s\Delta}f(x,s)}{\pi(s)} ds.
\end{equation}

Define the functions
\[
I(x)=\int_0^A \beta(x,a)\varphi(x,a)da, \qquad i(x,a)=\int_0^{A-a} \beta(x,z)\frac{\pi(z)}{\pi(z+a)}e^{-a\Delta}\varphi(x,z+a)dz.
\]
Then, from \eqref{eq2.26}, we deduce that
\begin{align}\label{eq2.28}
    \frac{\frac{e^{-a\Delta}\varphi(x,a)}{\pi(a)} - \displaystyle\int_0^{A-a}\beta(x,z)\frac{\pi(z)}{\pi(z+a)}e^{-a\Delta}\varphi(x,z+a)dz}{\displaystyle\int_0^a \frac{e^{-z\Delta}\mathds{1}_{\{\omega\times[0,a_0]\}}(x,z)}{\pi(z)}dz} = \frac{\displaystyle\int_0^a \frac{e^{-s\Delta}f(x,s)}{\pi(s)}ds + I(x)- i(x,a)}{\displaystyle\int_0^a \frac{e^{-z\Delta}\mathds{1}_{\{\omega\times[0,a_0]\}}(x,z)}{\pi(z)}dz}.
\end{align}
Our objective is to show that the expression on the left-hand side of \eqref{eq2.28} belongs to \(H\); that is, \(\varphi \in \mathcal{R}\).

On the one hand, since \(\frac{e^{-s\Delta}f(x,s)}{\pi(s)} \in H\), it follows from the Hardy inequality \cite{ref43} that
\[
\frac{1}{a}\int_0^a \frac{e^{-s\Delta}f(x,s)}{\pi(s)}ds \in H.
\]
On the other hand, we need to establish that
\[
\frac{I(x)-i(x,a)}{a} \in H.
\]
To this end, we compute the derivative of \( I(x)-i(x,a) \) with respect to \(x\). Differentiating under the integral sign (assuming sufficient regularity of \(\beta(x,a)\) and \(\varphi(x,a)\)), we have
\[
\frac{\partial I(x)}{\partial x} = \int_0^A \frac{\partial}{\partial x}\bigl(\beta(x,a)\varphi(x,a)\bigr) da = \int_0^A \left(\frac{\partial \beta(x,a)}{\partial x}\varphi(x,a) + \beta(x,a)\frac{\partial \varphi(x,a)}{\partial x}\right) da.
\]
Similarly, for \( i(x,a) \) we obtain
\[
\frac{\partial i(x,a)}{\partial x} = \int_0^{A-a} \frac{\partial}{\partial x}\left(\beta(x,z)\frac{\pi(z)}{\pi(z+a)}e^{-a\Delta}\varphi(x,z+a)\right) dz.
\]
By applying Leibniz's rule to differentiate the product, we deduce that
\begin{align*}
    \frac{\partial}{\partial x} \left(I(x) - i(x, a)\right) = \int_{0}^{A} \left(\frac{\partial \beta(x, a)}{\partial x} \varphi(x, a) + \beta(x, a) \frac{\partial \varphi(x, a)}{\partial x}\right) \, da 
- \int_{0}^{A-a} \frac{\partial \beta(x, z)}{\partial x} \frac{\pi(z)}{\pi(z + a)} e^{-a\Delta} \varphi(x, z + a) dz\\
- \int_{0}^{A-a} \beta(x, z) \frac{\pi(z)}{\pi(z + a)} e^{-a\Delta} \frac{\partial \varphi(x, z + a)}{\partial x} \, dz.
\end{align*}
Given that \(\beta(x,a) \in C^1(\Omega \times (0,A))\), \(\varphi \in L^2(0,A; H^2(\Omega))\), and that the function \(\pi(\cdot)\) is bounded, it follows that \(\frac{\partial}{\partial x}\bigl(I(x)-i(x,a)\bigr) \in H\). Consequently, by the Hardy inequality, we have 
\[
\frac{I(x)-i(x,a)}{a} \in H.
\]
Thus, the left-hand side of \eqref{eq2.28} belongs to \(H\), which implies that \(\varphi \in \mathcal{R}\).
\end{remark}
The following proposition is a result of null-controllability.
 \begin{proposition}\label{pr2.5}
 Under the assumptions of Proposition \ref{th1.3}, for any $T>A-a_0$ we have 
 \begin{align}\label{eq2.31}
     \operatorname{Ran}( \mathcal{T}_T)\subset\mathcal{R}.
 \end{align} 
 \end{proposition}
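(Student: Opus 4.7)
The plan is to verify, for arbitrary $y_0 \in H$ and $T > A - a_0$, the two defining conditions of $\mathcal{R}$ for $\bar y := \mathcal{T}_T y_0$, by combining the explicit semigroup formula \eqref{eq2.4} with Remark \ref{re2.4} through a density argument. The hypothesis $T > A - a_0$ is used critically to keep $1/\pi(a-T)$ bounded on the ``shift'' piece of the semigroup.

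First I would verify that $e^{-a\triangle}\bar y(x,a)/\pi(a) \in H$ by splitting $[0,A]$ into $\{a \ge T\}$ and $\{a < T\}$. On the former, formula \eqref{eq2.4} reduces the quotient to $e^{-(a-T)\triangle}y_0(x,a-T)/\pi(a-T)$; because $T > A - a_0$ forces $a - T \in [0, A-T] \subset [0, a_0)$, the denominator is bounded below by $\pi(A-T) > 0$ and the uniform $L^2(\Omega)$-bound on $e^{-s\triangle}$ (see \cite[Lemma 4.1]{ref3}) yields the $H$-estimate. On the latter, the quotient equals $b(x, T-a)$, which is $L^2$ by the standard renewal estimate combined with $\beta \in L^\infty$.

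Next I would handle the more delicate second condition, namely that the displayed ratio $G = N/D$ in the definition of $\mathcal{R}$ lies in $H$. The denominator $D(x,a) = \int_0^{\min(a,a_0)} e^{-z\triangle}\mathds{1}_\omega(x)/\pi(z)\,dz$ vanishes linearly at $a = 0$ and is bounded below by a positive multiple of $\mathds{1}_\omega$ for $a \ge a_0$. The numerator $N(x, a)$ vanishes at $a = 0$ thanks to the renewal identity $\bar y(x,0) = \int_0^A \beta(x,a)\bar y(x,a)\,da$ built into \eqref{eq2.4}. For $y_0$ in a suitable dense subspace $\mathcal{Z} \subset H$ consisting of elements of $D(\mathcal{A}_m)$ for which $\varphi/\pi \in H$ (which contains, e.g., smooth functions compactly supported in $\Omega\times(0, A-\epsilon)$ after minor adjustment to the renewal condition), the invariance of $D(\mathcal{A}_m)$ under the $C_0$-semigroup, together with Step 1, places $\bar y$ in exactly the class to which Remark \ref{re2.4} applies, and that remark immediately yields $\bar y \in \mathcal{R}$.

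Finally I would extend from $\mathcal{Z}$ to all of $H$ by a continuity argument: the map $y_0 \mapsto G$ is linear, and a uniform bound $\|G\|_H \le C(T)\|y_0\|_H$ should follow from the estimates of Step 1 together with the Hardy-type control of $N/a$ near $a = 0$ used inside the proof of Remark \ref{re2.4}. Passing to the limit along a $\mathcal{Z}$-approximating sequence transfers membership in $\mathcal{R}$ to the limit. The main obstacle will be producing this uniform estimate on $N/D$ near $a = 0$, because both $N$ and $D$ vanish there and a generic $y_0 \in H$ carries no $a$-differentiability for a direct Hardy argument; the proposed route therefore performs the rate-of-vanishing estimate on the smooth set $\mathcal{Z}$ (via Remark \ref{re2.4}) and then transports it to all of $H$ by linearity.
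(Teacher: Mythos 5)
Your Step 1 and the observation that the denominator vanishes linearly at $a=0$ while being bounded below for $a\ge a_0$ agree with the paper. The genuine gap is in your final density step. To transfer membership in $\mathcal{R}$ from the dense class $\mathcal{Z}$ to all of $H$ you need a uniform bound $\|G\|_{H}\le C(T)\|y_0\|_{H}$ on $\mathcal{Z}$, but Remark \ref{re2.4} cannot supply it: the estimate there controls the quotient by quantities such as $\bigl\|e^{-s\Delta}f/\pi\bigr\|_{H}$ with $f=\varphi'+\mu\varphi-\Delta\varphi$ and by spatial derivatives of $\varphi$, i.e.\ by the graph norm of $\bar y=\mathcal{T}_T y_0$ in $D(\mathcal{A}_m)$ (plus $\|\beta\|_{C^1}$), not by $\|y_0\|_{H}$. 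Since the age-structured semigroup has a transport component in $a$ and is not smoothing, $\|\mathcal{A}_m\mathcal{T}_T y_0\|_H$ is not dominated by $C\|y_0\|_H$, so the constants blow up along an $H$-approximating sequence and linearity alone does not ``transport'' the estimate. Your remark that the numerator $N$ vanishes at $a=0$ by the renewal identity is correct but only pointwise; what is needed to beat the linearly vanishing denominator is a quantitative $O(a)$ rate with a constant controlled by $\|y_0\|_H$, and that is exactly what the density route fails to produce.

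The paper avoids this entirely by a direct computation valid for every $y_0\in H$: substituting $\bar y=\mathcal{T}_T y_0$ from \eqref{eq2.4} into the numerator and splitting the integral over $[0,T-a]$ and $[T-a,A-a]$, the renewal structure produces an exact cancellation,
\begin{equation*}
\frac{e^{-a\Delta}\bar{y}(x,a)}{\pi(a)} - \int_{0}^{A-a}\beta(x,z)\frac{\pi(z)}{\pi(z+a)}\,e^{-a\Delta}\bar{y}(x,z+a)\,dz
=\int_{A-a}^{A}\beta(x,z)\,\pi(z)\,\frac{e^{(T-a)\Delta}y_0(x,z+a-T)}{\pi(z+a-T)}\,dz,
\end{equation*}
an integral over an interval of length exactly $a$. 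After the change of variables $z\mapsto z+a-T$ this is bounded by $\|\beta\|_{L^\infty}\int_0^a\bigl|e^{(T-a)\Delta}y_0(x,s+A-T)\bigr|\,ds$, and Hardy's inequality applied to $\tfrac1a\int_0^a(\cdot)\,ds$ gives the $L^2(0,\epsilon;L^2(\Omega))$ bound with constant proportional to $\|y_0\|_H$; the region $a\ge\epsilon$ is immediate since the denominator is bounded below there. If you want to salvage your argument, the fix is precisely to carry out this explicit collapse of $N$ rather than invoking Remark \ref{re2.4}, which is designed for a different class of functions and whose constants are of the wrong nature for a density passage.
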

 \begin{proof}[of Proposition \ref{pr2.5}]
We now demonstrate that \eqref{eq2.31} is satisfied, thereby allowing us to conclude null-controllability. Let \(\bar{y} \in \operatorname{Ran}(\mathcal{T}_T)\). Then, according to \eqref{eq2.4}, there exists \(y_0 \in H\) such that  
 \begin{equation}
\bar{y}(x,a)=\left\lbrace
\begin{array}{ll}
\frac{\pi(a)}{\pi(a-T)}e^{T\triangle}y_0(x,a-T)\quad & T\leq a, \quad\forall x\in\Omega,\\
\\\pi(a)e^{a\triangle}b(x,T-a)\quad & T>a\quad\forall x\in\Omega.
\end{array}
\right.
\end{equation}
where  
\[
b(x,t)=\int_0^A \beta(x,a)\, (\mathcal{T}_t y_0)(x,a)\, da.
\]
Thus, for every \(x\in \Omega\) we have  
\[
\frac{e^{-a\Delta}\bar{y}(x,a)}{\pi(a)} = \frac{e^{(T-a)\Delta}y_0(x,a-T)}{\pi(a-T)} \quad \text{for all } a \in [T,A],
\]
and  
\[
\frac{e^{-a\Delta}\bar{y}(x,a)}{\pi(a)} = b(x,T-a) \quad \text{for all } a \in [0,T].
\]
Hence, \(\frac{e^{-a\Delta}\bar{y}(x,a)}{\pi(a)}\in H\).

It remains to verify that  
\begin{equation}\label{eq2.33}
\frac{\frac{e^{-a\Delta}\bar{y}(x,a)}{\pi(a)} - \displaystyle\int_{0}^{A-a} \beta(x,z) \frac{\pi(z)}{\pi(z+a)}\, e^{-a\Delta}\bar{y}(x,z+a)dz}{\displaystyle\int_{0}^{a}\frac{e^{-s\Delta}\mathds{1}_{\{\omega\times[0,a_0]\}}(x,s)}{\pi(s)} ds} \in H.
\end{equation}
This is the desired condition that ensures \(\bar{y} \in \mathcal{R}\).

\underline{\bf Case 1} : For \(a \in [\epsilon, A]\) (with \(\epsilon > 0\)), consider the expression
\begin{align}
    \frac{e^{-a\Delta}\bar{y}(x,a)}{\pi(a)} - \int_{0}^{A-a}\beta(x,z)\frac{\pi(z)}{\pi(z+a)}\,e^{-a\Delta}\bar{y}(x,z+a)\,dz.
\end{align}
This expression can be equivalently rewritten as
\[
\frac{e^{-a\Delta}\bar{y}(x,a)}{\pi(a)} - \int_{0}^{A-a}\beta(x,z)\pi(z)\,\frac{e^{-a\Delta}\bar{y}(x,z+a)}{\pi(z+a)}\,dz.
\]
Noting that for \(a\) in the appropriate range we have
\[
\frac{e^{-a\Delta}\bar{y}(x,a)}{\pi(a)} = \frac{e^{(T-a)\Delta}y_0(x,a-T)}{\pi(a-T)},
\]
it follows that the expression becomes
\[
\frac{e^{(T-a)\Delta}y_0(x,a-T)}{\pi(a-T)} - \int_{0}^{A-a}\beta(x,z)\pi(z)\,\frac{e^{-a\Delta}\bar{y}(x,z+a)}{\pi(z+a)}\,dz.
\]
By decomposing the integral over the intervals \([0,\,T-a]\) and \([T-a,\,A-a]\), we obtain
\[
\frac{e^{(T-a)\Delta}y_0(x,a-T)}{\pi(a-T)} - \int_{0}^{T-a}\beta(x,z)\pi(z)\,b(x,T-(z+a))\,dz - \int_{T-a}^{A-a}\beta(x,z)\pi(z)\,\frac{e^{(T-a)\Delta}y_0(x,z+a-T)}{\pi(z+a-T)}\,dz.
\]
After further manipulations, this expression can be recast as
\[
\frac{e^{(T-a)\Delta}y_0(x,a-T)}{\pi(a-T)} + \int_{A-a}^{A}\beta(x,z)\pi(z)\,\frac{e^{(T-a)\Delta}y_0(x,z+a-T)}{\pi(z+a-T)}\,dz - b(x,T-a),
\]
which ultimately simplifies to
\[
\frac{e^{-a\Delta}\bar{y}(x,a)}{\pi(a)} - \int_{0}^{A-a}\beta(x,z)\frac{\pi(z)}{\pi(z+a)}\,e^{-a\Delta}\bar{y}(x,z+a)\,dz.=\int_{A-a}^{A}\beta(x,z)\pi(z)\,\frac{e^{(T-a)\Delta}y_0(x,z+a-T)}{\pi(z+a-T)}\,dz.
\]
Hence, the expression in \eqref{eq2.33} belongs to \(L^2(\epsilon, A; L^2(\Omega))\).

\underline{Case 2} : For \(a \in [0,\epsilon]\) (with \(\epsilon>0\)), we consider  
\begin{align}
    \frac{e^{-a\Delta}\bar{y}(x,a)}{\pi(a)} - \int_{0}^{A-a} \beta(x,z)\pi(z)\frac{e^{-a\Delta}\bar{y}(x,z+a)}{\pi(z+a)}dz.
\end{align}
This expression can be rewritten as  
\[
b(x,T-a) - \int_{0}^{T-a} \beta(x,z)\pi(z)\frac{e^{-a\Delta}\bar{y}(x,z+a)}{\pi(z+a)}dz - \int_{T-a}^{A-a} \beta(x,z)\pi(z)\frac{e^{-a\Delta}\bar{y}(x,z+a)}{\pi(z+a)}dz.
\]
Since  
\[
b(x,T-a) = \int_0^A \beta(x,a) y(x,a,T-a)\,da,
\]
we can express the above as  
\[
b(x,T-a) - \int_{0}^{T-a} \beta(x,z)\pi(z)\,b(x,T-(z+a))dz - \int_{T-a}^{A-a} \beta(x,z)\pi(z)\frac{e^{-a\Delta}\bar{y}(x,z+a)}{\pi(z+a)}dz.
\]
Thus,  
\begin{align}\label{eq2.36}
    \frac{e^{-a\Delta}\bar{y}(x,a)}{\pi(a)} - \int_{0}^{A-a}\beta(x,z)\pi(z)\frac{e^{-a\Delta}\bar{y}(x,z+a)}{\pi(z+a)}dz 
= b(x,T-a) - \int_{0}^{T-a} \beta(x,z)\pi(z)\,b(x,T-(z+a))dz 
\end{align}
\begin{align*}
- \int_{T-a}^{A-a} \beta(x,z)\pi(z)\frac{e^{(T-a)\Delta}y_0(x,z+a-T)}{\pi(z+a-T)}dz.
\end{align*}
By further manipulation, we obtain  
\begin{align}
    \frac{e^{-a\Delta}\bar{y}(x,a)}{\pi(a)} - \int_{0}^{A-a}\beta(x,z)\pi(z)\frac{e^{-a\Delta}\bar{y}(x,z+a)}{\pi(z+a)}dz 
= \int_{A-a}^{A} \beta(x,s)\pi(s)\frac{e^{(T-a)\Delta}y_0(x,s+a-T)}{\pi(s+a-T)}ds.
\end{align}
We now seek to establish an upper bound for this final integral. Specifically, we have
\[
\frac{\displaystyle\int_{A-a}^{A} \beta(x,s)\pi(s)\frac{e^{(T-a)\Delta}y_0(x,s+a-T)}{\pi(s+a-T)}ds}{\displaystyle\int_{0}^{a}\frac{e^{-s\Delta}\mathds{1}_{\{\omega\times[0,a_0]\}}(x,s)}{\pi(s)}ds} \leq \frac{\displaystyle\int_{A-a}^{A} \beta(x,s)\pi(s)\frac{e^{(T-a)\Delta}y_0(x,s+a-T)}{\pi(s+a-T)}ds}{a}.
\]
By a change of variable, setting \(z = s + a - T\), we rewrite the right-hand side as  
\[
\|\beta\|_{L^\infty(\Omega\times[0,A])} \frac{\displaystyle\int_{0}^{a} \left| e^{(T-a)\Delta}y_0(x,s+A-T)\right| ds}{a}.
\]
Then, by an application of the Hardy inequality, the function
\[
\frac{1}{a}\int_{0}^{a} \left| e^{(T-a)\Delta}y_0(x,s+A-T)\right| ds
\]
belongs to \(L^2(0,\epsilon;L^2(\Omega))\). Consequently, the entire expression in \eqref{eq2.36} belongs to \(L^2(0,\epsilon; L^2(\Omega))\), which implies that \(\bar{y}\in \mathcal{R}\).
\end{proof}
\begin{proof}[of Proposition \ref{th1.3}]
In accordance with Propositions \ref{pr2.3} and \ref{pr2.5}, we deduce that  
\begin{align}\label{eq2.42}
    \operatorname{Ran}(\mathcal{T}_T) \subset \operatorname{Ran}(\Phi_T),
\end{align}
which implies the null-controllability of system \eqref{eq1.5}. Consequently, since \eqref{eq2.42} holds, there exists a control \(v \in L^2([0,T];H)\) satisfying  
\begin{align}\label{eq2.43}
 \Phi_T v=-\mathcal{T}_{T} y_0.
\end{align}
By following an argument analogous to that in Proposition \ref{pr2.3}, we can construct a control fulfilling \eqref{eq2.43}. 

Specifically, for \(t-a < T-A\) we have \(\Phi_t v = 0\) and  
\begin{align}
    y(x,a,t) = \frac{\pi(a)}{\pi(a-t)}e^{t\Delta}y_0(x,a-t).
\end{align}

Next, for \(T-A \leq t-a \leq 0\), we obtain  
\begin{align*}
    y(x,T-(t-a),T) = \frac{\pi(T-(t-a))}{\pi(a-t)}e^{t\Delta}y_0(x,a-t) \\+ u(x,t-a)\int_{a-t}^{T-(t-a)} \frac{\pi(T-(t-a))e^{(T-(t-a)-s)\Delta}\mathds{1}_{\{\omega\times[0,a_0]\}}(x,s)}{\pi(s)}ds, \; \forall a\in [T,A],\, x\in \Omega.
\end{align*}
To ensure that \eqref{eq2.43} is satisfied, the control must be defined as  
\begin{align}
    v(x,a,t) = \frac{-\dfrac{e^{(t-a)\Delta}y_0(x,a-t)}{\pi(a-t)}}{\displaystyle\int_{a-t}^{T-(t-a)} \frac{e^{-z\Delta}\mathds{1}_{\{\omega\times[0,a_0]\}}(x,z)}{\pi(z)}dz}, \quad \text{for } T-A \leq t-a \leq 0.
\end{align}

For \(0 < t-a \leq T\), we have 
\begin{align}
    y(x,a,t) = \pi(a)e^{a\Delta}\eta(x,t-a) + u(x,t-a)\int_{0}^{a}\pi(a)\frac{e^{(a-s)\Delta}\mathds{1}_{\{\omega\times[0,a_0]\}}(x,s)}{\pi(s)}ds,
\end{align}
and equivalently,
\[
y(x,T-(t-a),T) = \pi(T-(t-a))e^{(T-(t-a))\Delta}\eta(x,t-a) + u(x,t-a)\int_{0}^{T-(t-a)}\frac{\pi(T-(t-a))e^{(T-(t-a)-s)\Delta}\mathds{1}_{\{\omega\times[0,a_0]\}}(x,s)}{\pi(s)}ds,
\]
with  
\[
\eta(x,t) = \int_{t+A-T}^{A}\beta(x,a)\pi(a)\frac{e^{t\Delta}y_0(x,a-t)}{\pi(a-t)}da.
\]
Thus, \eqref{eq2.43} is verified if we define
\[
v(x,a,t) = -\frac{\pi(T-(t-a))\eta(x,t-a)}{\pi(T-(t-a))\displaystyle\int_{0}^{T-(t-a)} \frac{e^{-z\Delta}\mathds{1}_{\{\omega\times[0,a_0]\}}(x,z)}{\pi(z)}dz}.
\]

In conclusion, we have derived a null control of the form
\begin{equation}\label{eq2.51}
v(x,a,t)=\begin{cases}
0, & \text{if } t-a<0,\quad \forall\, x\in\Omega,\\[1mm]
-\dfrac{\displaystyle\int_{t-a}^{A}\beta(x,z)\pi(z)\,\dfrac{e^{(t-a)\Delta}y_0(x,z-(t-a))}{\pi(z-(t-a))}\,dz}{\displaystyle\int_{0}^{A-(t-a)}\frac{e^{-z\Delta}\,\mathds{1}_{\{\omega\times[0,a_0]\}}(x,z)}{\pi(z)}\,dz}, & \text{if } t-a\geq 0,\quad \forall\, x\in\Omega,
\end{cases}
\end{equation}
with the corresponding controlled state given by
\begin{equation}\label{eq2.52}
y(x,a,t)=\begin{cases}
\dfrac{\pi(a)}{\pi(a-t)}e^{t\Delta}y_0(x,a-t), & \text{if } t-a<0,\quad \forall\, x\in\Omega,\\[1mm]
\pi(a)e^{a\Delta}\eta(x,t-a)\left[1-\dfrac{\displaystyle\int_{0}^{a}\frac{e^{-s\Delta}\,\mathds{1}_{\{\omega\times[0,a_0]\}}(x,s)}{\pi(s)}\,ds}{\displaystyle\int_{0}^{A-(t-a)}\frac{e^{-z\Delta}\,\mathds{1}_{\{\omega\times[0,a_0]\}}(x,z)}{\pi(z)}\,dz}\right], & \text{if } t-a>0,\quad \forall\, x\in\Omega,
\end{cases}
\end{equation}
which satisfies
\begin{equation}\label{eq2.53}
y(x,a,A)=0\qquad \text{a.e. } x\in\Omega,\quad \forall\, a\in [0,A].
\end{equation}

Taking \(v=0\) on the time interval \((A,T)\), equation \eqref{eq2.2} implies that
\[
y(\cdot,\cdot,T)=\mathcal{T}_T y_A(\cdot,\cdot),
\]
thus, we obtain the estimate
\begin{equation}\label{eq2.55}
\Vert y(\cdot,\cdot,T)\Vert_{H}\leq C \Vert y_A\Vert_{H}.
\end{equation}
Combining \eqref{eq2.55} with \eqref{eq2.53}, we deduce that for every \(T > A-a_0\),
\begin{equation}\label{eq2.56}
y(x,a,T)=0\qquad \text{a.e. } x\in\Omega,\quad  a\in [0,A].
\end{equation}

This establishes the null-controllability of the system \eqref{eq1.5}.
\end{proof}
\paragraph{\bf Acknowledgement}
The authors wish to thank Prof. Enrique Zuazua  for his comments, suggestions and for fruitful discussions.
\bibliographystyle{smfplain} 
\bibliography{biblio_2}
\end{document}